\newtheorem{thm}{Theorem}[section]
\newtheorem{rem}[thm]{Remark}
\newtheorem{lem}[thm]{Lemma}
\newtheorem{prop}[thm]{Proposition}
\newtheorem{defn}[thm]{Definition}
\newcounter{alphproblem}
\renewcommand{\thealphproblem}{\alph{alphproblem}}
\newenvironment{alphproblem}{
	\refstepcounter{alphproblem}
%	\par\vspace{10pt}\noindent
	\textbf{Problem \thealphproblem.}
%	\par\vspace{5pt}\noindent
	\ignorespaces
}
\numberwithin{equation}{section}
\newcommand{\al}{\alpha}
\newcommand{\ld}{\lambda}
\newcommand{\De}{\Delta}
\newcommand{\ep}{\varepsilon}
\newcommand{\si}{\sigma}
\newcommand{\om}{\omega}
\newcommand{\Om}{\Omega}
\newcommand{\Ga}{\Gamma}
\newcommand{\U}{\mathbb{S}}
\newcommand{\T}{\mathcal{T}}
\DeclareMathOperator{\tr}{tr}
\newcommand{\Real}{\mathbb{R}}
\newcommand{\norm}[1]{\Vert#1\Vert}
\def\<{\left\langle} \def\>{\right\rangle}
\def\({\left(} \def\){\right)}
\newcommand{\n}{\nabla}
\newcommand{\p}{\partial}
\subjclass[2020]{Primary 35G61, 35Q55, 35Q60, 58J35}
\keywords{The Schr\"{o}dinger flow, Landau-Lifshitz equation, The initial-Neumann boundary value problem, Local well-posedness}
\begin{document}
\title[The Schr\"odinger flow]{Local well-posedness of the Schr\"odinger flow into $\mathbb{S}^2$ with natural boundary conditions}
\thanks{*Corresponding Author}
\author[B. Chen]{\small Bo Chen}
\address{\small School of Mathematics, South China University of Technology, Guangzhou, 510640, China}
\email{cbmath@scut.edu.cn}
	
\author[Y.D. Wang]{\small Youde Wang*}
\address{\small 1. School of Mathematics and Information Sciences, Guangzhou University;
		2. Hua Loo-Keng Key Laboratory of Mathematics, Institute of Mathematics, AMSS, and School of
		Mathematical Sciences, UCAS, Beijing 100190, China.}
\email{wyd@math.ac.cn}
\date{\today}

%\tableofcontents
\begin{abstract}
In this paper, we develop a new approximation scheme to solve the local well-posedness problem for the Schr\"odinger flow into the standard unit 2-sphere $\mathbb{S}^2\subset\Real^3$ (i.e., the Landau-Lifshitz equation) with natural boundary conditions.
\end{abstract}

\maketitle

\section{Introduction}\label{intr}
In 1935, Landau and Lifshitz \cite{LL} derived the celebrated ferromagnetic chain equation, commonly known as the Landau-Lifshitz (LL) equation:
\[\p_tu =-u\times\De u.\]
They further formulated the initial-Neumann boundary value problem of LL equation, which is also called LL equation with natural boundary conditions, as follows
\begin{equation}\label{LL}
\begin{cases}
\p_tu =-u\times\De u,\quad\quad&\text{(x,t)}\in\Om\times \Real^+,\\[1ex]
\frac{\p u}{\p \nu}=0, &\text{(x,t)}\in\p\Om\times \Real^+,\\[1ex]
u(x,0)=u_0: \Om\to \U^2,
\end{cases}
\end{equation}
where $\Om$ is a bounded domain in Euclidean spaces $\Real^{m}$ with $m\leq 3$, $\nu$ is the outward unit normal vector of $\p \Om$.

In this paper, we study the local well-posedness of the problem \eqref{LL} under the necessary compatibility conditions (see~~Definition \ref{def-comp} in this paper) for the initial data $u_0$. This problem remains open and fundamentally challenging for an extended period, despite partial progress in our earlier work \cite{CW2,CW4}, where the existence and uniqueness of local regular solutions were established under certain sufficient boundary compatibility conditions for $u_0$.

\subsection{Definitions and backgrounds}\

\medskip
Let $\Om$ be a bounded domain in $\Real^m$ with $m\leq 3$. In physics, for a time-dependent map $u$ from $\Om$ into $\U^2$ (where $\U^2$ denotes the unit sphere in $\Real^3$), the Landau-Lifshitz (LL) equation
\begin{equation}\label{LL-eq0}
	\p_t u=-u\times \De u	
\end{equation}
was first proposed by Landau and Lifshitz \cite{LL} in 1935 as a phenomenological model for studying the dispersive behavior of magnetization in ferromagnetic materials. In 1955, Gilbert \cite{G} extended this model by introducing a dissipative term, leading to the Landau-Lifshitz-Gilbert (LLG) equation:
\begin{equation*}
\p_tu=\beta u\times \Delta u-\al u\times (u\times \Delta u),
\end{equation*}
where $\beta$ is a real number and $\al \geq 0$ is the Gilbert damping coefficient. Here ``$\times$" denotes the cross product in $\Real^{3}$ and $\De$ is the Laplace operator in $\Real^{3}$.

In fact, this equation is closely related to the material sciences \cite{CDMS}. Ferromagnetic materials have intrinsic magnetic orders (magnetization) and exhibit bistable structures, making them widely used in data storage devices. One prominent example is permalloy, a nickel-iron magnetic alloy that typically adopts a face-centered cubic phase. However, a real permalloy often displays an irregular polycrystalline structure \cite{Sch}. Its magnetization dynamics of such materials is modeled by the multi-scale LLG equation \cite{G, LL} with locally periodic material coefficients.

Specifically, consider a material on an open, bounded and connected domain $\Omega$. The magnetization $u: \Omega\subset\mathbb{R}^3 \to \mathbb{S}^2$ is described by the following multiscale LLG equation in dimensionless form:
$$\partial_tu-\alpha u\times\partial_t u= -(1+\alpha^2)u\times h(u),$$
where $0<\alpha<1$ is the Gilbert damping constant. The effective field $h(u)$ takes the form
$$h(u) = \mathrm{div}(a(x)\nabla u) - k(x)(u-(u\cdot e)e) + h_s(u) + h_e.$$
The exchange coefficient matrix denoted by $a(x) = (a_{ij}(x))$ ($i, j=1, 2, 3$) is assumed to be symmetric and satisfies
$$\lambda |\xi|^2 \leq \xi\cdot a\xi \leq \Lambda |\xi|^2$$
for any $\xi \in \mathbb{R}^3$ a.e. on $\Omega$ with $0 < \lambda < \Lambda$. The anisotropy coefficient $k(x)$ is a positive bounded scalar function, and the constant vector $e\in\mathbb{S}^2$ is the direction of the easy axis. The stray field $h_s(u) = -\nabla U$ and $U$ satisfies
$$U(x, t) =\int_{\Omega}\nabla N (x-y)\cdot u(y, t)dy,$$
where $N(x) = -\frac{1}{4\pi|x|}$ is the Newtonian potential and $h_e$ is the external magnetic field.

The LLG equation has since inspired numerous physically significant generalizations, including models incorporating spin current, spin-polarized transport, and magneto-elastic equation. For a comprehensive overview, we refer to \cite{Bo,GW,KKS,Sci,Nature} and the references therein.
\medskip

Mathematically, the negative sign ``$-$'' in equation \eqref{LL-eq0} does not affect our analysis and the main results. Therefore, for simplicity, we focus on the classical Schr\"{o}dinger flow into $\U^2$:
\[\p_tu =u\times\De u.\]
Geometrically, the cross product operator ``$u\times$" can be interpreted as a complex structure
\[J(u)=u\times: T_{u}\U^2\to T_{u}\U^2\]
in $\U^2$, which rotates the vectors in the tangent space of $\U^2$ anticlockwise by an angle of $\frac{\pi}{2}$ degrees. This allows us to rewrite the equation in an intrinsic form:
\[\p_tu = J(u)\tau(u),\]
where $$\tau(u)=\De u + |\p u|^2u$$ is the tension field of the map $u$, $\p$ denotes the Euclidean derivative.

From the perspective of infinite-dimensional symplectic geometry, Ding and Wang \cite{DW,DW1} introduced the Schr\"odinger flows for maps from a Riemannian manifold $(M,g)$ into a symplectic manifold $(N, J, h)$, which generalizes the LL equation \eqref{LL-eq0}, and was also independently developed by Terng and Uhlenbeck \cite{Uh1} from the viewpoint of integrable systems. Concretely, for a time-dependent map $u:M\times \Real^+\to N\hookrightarrow \Real^{K}$, the Schr\"odinger flow  is governed by
\[\p_t u=J(u)\tau(u).\]
Here, the tension field $\tau(u)$ has extrinsic expression
$$\tau(u)=\De_g u+A(u)(\p u,\p u).$$
where $A(\cdot, \cdot)$ is the second fundamental form of the embedding $N\hookrightarrow \Real^{K}$.

\subsection{Related works}\

\medskip
Due to their profound physical significance, the Schr\"odinger flow and closely related LL-type equations have attracted considerable attention from both physicists and mathematicians. During the past five decades, substantial progress has been made in understanding the well-posedness of weak and regular solutions to the Schr\"odinger flow in various geometric settings.

In 1986, P.L. Sulem, C. Sulem and C. Bardos \cite{SSB} established the existence of global weak solutions and local regular solutions to the Schr\"{o}dinger flow for maps from $\Real^n$ into $\U^2$, by employing the difference method. Later, Y.D. Wang \cite{W} extended this global existence result of weak solutions to the Schr\"odinger flow from a closed Riemannian manifold or a bounded domain in $\Real^n$ into $\U^2$, by using the complex structure approximation method. For the hyperbolic plane $\mathbb{H}^2$ as a target, A. Nahmod, J. Shatah, L. Vega and C.C. Zeng \cite{NSVZ} investigated the existence of global weak solutions to the Schr\"odinger flow defined on $\Real^2$. Further developments of weak solutions to a class of generalized Schr\"odinger flows and related equations can be found in \cite{CW0,JW1,JW2} and references therein. However, the existence of global weak solutions to the Schr\"odinger flow for maps between a Riemannian manifold and a K\"ahler manifold remains an open problem.

The local existence theory for the Schr\"odinger flow, initiated in \cite{SSB}, has been generalized in several directions. Ding and Wang \cite{DW} demonstrated the existence of local regular solutions to the Schr\"odinger flow from a closed Riemanian manifold or $\Real^n$ into a K\"ahler manifold by applying a parabolic geometric approximation equation and estimated some appropriate intrinsic geometric energy. Later, for low-regularity initial data, A.R. Nahmod, A. Stefanov and K. Uhlenbeck \cite{NSU} gained a near-optimal (but conditional) local well-posedness result for the Schr\"odinger map flow from $\Real^2$ into the sphere $X =\mathbb{S}^2$ or the hyperbolic space $X = \mathbb{H}^2$, by using the standard technique of Picard iteration in suitable function spaces of the Schr\"odinger equation. Both results include persistence of regularity, which means that the solution always stays as regular as the initial data (as measured in Sobolev norms), provided that it is within the time of existence guaranteed by the local existence theorem.

For one dimensional global existence for Schr\"odinger flow from either $\mathbb{S}^1$ or $\mathbb{R}^1$ into a K\"ahler manifold, we refer to  the works \cite{ZGT,PWW, RRS} and the recent preprint \cite{WZ}.  In higher dimensions $n\geq 2$, the global well-posedness result for the Schr\"odinger flow from $\mathbb{R}^n$ into $\U^2$ with small initial data has been extensively investigated by Bejanaru, Ionescu, Kenig and Tataru, see \cite{IK,BIK,B1,BIKT} for detailed results. Notably, the global well-posedness result for the Schr\"odinger flow for small data in the critical Sobolev spaces in dimensions $n\geq2$ was addressed in \cite{BIKT}, and Z. Li \cite{L1, L2} later extended these results to compact K\"ahler targets.

For the equivariant Schr\"odinger flows,  Bejenaru, Ionescu, Kenig, and Tataru \cite{BIKT1, BIKT2} proved global well-posedness and scattering in two key settings: for flows from $\Real^2$ with energy below ground state, and for finite-energy equivariant flows into
hyperbolic space $\mathbb{H}^2$. In addition, the global dynamics of the Schr\"odinger flow on $\Real^n$ or $\mathbb{H}^2$ near harmonic maps were studied in \cite{GKT,GNT,BT,BT10,LLOS,LZ3}.

In contrast, for dimensions $n\geq 2$, the Schr\"odinger flow with large initial data is known to develop singularities. Finite-time blow-up solutions near harmonic maps for 1-equivariant flows were constructed by Merle, Raphael and Rodnianski \cite{MRR} and Perelman \cite{GP}. Furthermore, self-similar finite-time blow-up solutions with locally bounded energy for the Schr\"odinger flow from $\mathbb{C}^n$ into $\mathbb{C}P^n$ were obtained in \cite{DTZ, GSZ,NSVZ}. The vortex-structured traveling wave solutions for the Schr\"odinger flow were constructed by F. Lin and J. Wei \cite{LW} and later by J. Wei and J. Yang \cite{WY}.

Despite these advances, the initial-boundary value problem for the Schr\"odinger flow in a manifold $M$ with boundary (where $\mbox{dim}(M)\geq 2$) remains largely unexplored. The study of such problems goes back to the foundational work of Landau and Lifshitz \cite{LL}, who introduced the initial-Neumann boundary value (INB) problem of the ferromagnetic chain equation (i.e. LL equation):
\begin{equation}\label{eq-LL}
\begin{cases}
\p_tu =u\times\De u,\quad\quad&\text{(x,t)}\in\Om\times \Real^+,\\[1ex]
\frac{\p u}{\p \nu}=0, &\text{(x,t)}\in\p\Om\times \Real^+,\\[1ex]
u(x,0)=u_0: \Om\to \U^2,
\end{cases}
\end{equation}
where $\Om$ is a bounded domain in Euclidean spaces $\Real^{m}$ with $m\leq  3$, and $u$ is a map from $\Om$ into the unit sphere $\U^2$. For decades, progress in establishing regular solutions to this problem  has been limited. In our previous work \cite{CW2}, we proved the existence and uniqueness of local strong solutions to the problem \eqref{eq-LL}, under the assumption that $u_0\in W^{3,2}(\Om,\U^2)$ satisfies the 0-th order compatibility condition $\frac{\p u_0}{\p \nu}|_{\p \Om}=0$.  The proof relied on a parabolic approximation scheme and the derivation of novel equivalent $W^{3,2}$-energy estimates for the approximation solutions.  Recently, in \cite{CW4}, we established the local existence of highly regular solutions to the problem \eqref{eq-LL} within Sobolev spaces. Specifically, we obtained the following results.

\begin{thm}\label{thm1}
Let $\Om$ be a smooth bounded domain in $\Real^3$. Suppose that $u_0\in W^{5,2}(\Om,\U^2)$, which satisfies the $1$-th order compatibility conditions:
\begin{align}
 \frac{\p u_0}{\p \nu}|_{\p \Om}=0\quad\quad\mbox{and}\quad\quad \n_\nu \tau(u_0)|_{\p\Om}=0,\label{1-order-cc}  
\end{align}
where $\n$ is the pull-back connection on ${u_0}^*(T\U^2)$. Then there exists a positive time $T_0$ depending only on $\norm{u_0}_{W^{5,2}(\Om)}$ such that problem \eqref{eq-LL} admits a unique solution $u$ on $[0,T_0]$, which satisfies
\[\p^i_tu\in L^\infty([0,T_0], W^{5-2i,2}(\Om))\]
for $i=0,1,2$.
\end{thm}

Additionally, we further proved higher regularities of the solution $u$ given in Theorem \ref{thm1}, by providing the following $k$-th order compatibility conditions for $u_0$ at the boundary, for any $k\geq 2$:
\begin{itemize}
\item[$\blacklozenge$]  \emph{For all $0\leq j\leq 2k$, there holds
\begin{equation*}
\frac{\p}{\p \nu}\p^ju_0|_{\p\Om}=0,
\end{equation*}
where $\p^ju_0=\(\frac{\p^ju_0}{\p x^{i_1}\cdots\p x^{i_j}}\)$ are all the $j$-th partial derivatives of $u_0$.}	
\end{itemize}

However, for the existence of local smooth solutions to the INB problem \eqref{eq-LL}, the necessary $k$-th order compatibility conditions of $u_0$ are significantly weaker. These conditions are intrinsically defined by the following:
\begin{itemize}
\item[$\bullet$] For all $0\leq j\leq k$, the initial data $u_0$ satisfies
\begin{equation}\label{com-cond-1}
\n_\nu v_j(0)|_{\p \Om}=0,
\end{equation}
\end{itemize}
where $v_j(0)=\n^j_t u|_{t=0}\in u^*_0(T\U^2)$, provided that $u$ is a regular solution to problem \eqref{eq-LL} on $[0,T]$, and $\n$ is the pull-back connection on $u^*(T\U^2)$. In particular, $v_1(0)=u_0\times \tau(u_0)$. Hence, condition $\n_\nu \tau(u_0)|_{\p \Om}=0$ (see \eqref{1-order-cc}) is equivalent to 
\[\n_\nu v_1(0)|_{\p \Om}=0.\]  
We refer to Proposition \ref{intrc-cd} for more details. 

\medskip
This raises the following open problem:

\medskip
\begin{alphproblem}\label{problem1}
\emph{Does the LL equation with natural boundary conditions (i.e., the INB problem \eqref{eq-LL}) admit a local smooth solution if and only if  the initial data satisfies the necessary $k$-th order compatibility conditions \eqref{com-cond-1} for each $k\in \mathbb{N}$?}
\end{alphproblem}

In general, if the target manifold is an arbitrary compact K\"ahler manifold $N$, the well-posedness for the INB problem to the Schr\"odinger flow:
\begin{equation}\label{S-eq1}
\begin{cases}
\p_tu =J(u)\tau(u),&\text{(x,t)}\in \Om\times [0,T],\\[1ex]
\frac{\p u}{\p \nu}=0, &\text{(x,t)}\in\p \Om\times [0,T],\\[1ex]
u(x,0)=u_0: \Om\to N,
\end{cases}
\end{equation}
remains unclear. This leads us to pose the following problem:

\medskip
\begin{alphproblem}\label{problem2}
\emph{Can we establish the existence of regular solutions (or even smooth solutions) to the problem \eqref{S-eq1} under the necessary compatibility conditions (see \eqref{com-cond1})?}
\end{alphproblem}
\subsection{Main results} In this paper, we provide a complete resolution of the above Problem \ref{problem1}. However, our approach is not directly applicable to the problem \ref{problem2}. Resolving this second problem \ref{problem2} requires the development of new methods to overcome several fundamental difficulties, and will be addressed in a forthcoming paper.

\medskip
Our main results of this paper can be presented as follows.

\begin{thm}\label{main-thm}
Let $\Om$ be a smooth bounded domain in $\Real^3$, $k\in \mathbb{N}$ such that $k\geq 1$. Suppose that the initial map $u_0\in W^{2k+3,2}(\Om)$ satisfies the necessary $k$-th order compatibility conditions \eqref{com-cond-1}, i.e.,
\[\n_\nu v_j(0)|_{\p \Om}=0\]
for $0\leq j\leq k$. Then there exists a positive constant $T_0$ depending only on $\norm{u_0}_{W^{5,2}}$ such that INB problem \eqref{eq-LL} admits a unique solution on $[0,T_0]$, which satisfies
\[\p_t^iu \in L^\infty([0,T_0], W^{2k+3-2i,2}(\Om)),\]
where $0\leq i\leq k+1$.

Furthermore, if $u_0\in C^\infty(\bar{\Om})$ satisfies the $k$-th order compatibility conditions \eqref{com-cond-1} for each $k\in \mathbb{N}$. Then the solution $u$ is smooth on $\bar{\Om}\times[0,T_0]$.
\end{thm}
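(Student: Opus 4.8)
The plan is to construct the solution via a carefully designed approximation scheme that respects the geometry of the target sphere while converting the degenerate (dispersive, non-parabolic) Schrödinger flow into a solvable family of problems, and then to pass to the limit using uniform energy estimates. Concretely, I would first set up the approximate equation
\[
\p_t u_\ep = \ep\, \tau(u_\ep) + u_\ep\times\tau(u_\ep), \qquad \frac{\p u_\ep}{\p\nu}\Big|_{\p\Om}=0,\qquad u_\ep(\cdot,0)=u_0,
\]
i.e. a Landau--Lifshitz--Gilbert-type regularization with small damping $\ep>0$. This is a quasilinear parabolic system for maps into $\U^2$ with the Neumann condition, so for fixed $\ep$ local existence and uniqueness of a smooth (or sufficiently regular) solution $u_\ep$ follows from standard parabolic theory together with the observation that $|u_\ep|\equiv 1$ is preserved (take the inner product of the equation with $u_\ep$ and use $\langle\tau(u_\ep),u_\ep\rangle = -|\p u_\ep|^2$, which gives a scalar parabolic inequality for $|u_\ep|^2-1$ forcing it to stay $\equiv 0$). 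The crucial point is that the damping term $\ep\tau(u_\ep)$ does not destroy the Neumann boundary condition structure and that the compatibility conditions \eqref{com-cond-1} for the original flow are exactly what is needed so that the corresponding time-derivatives $\n_t^j u_\ep|_{t=0}$ satisfy the Neumann conditions required for the parabolic problem to have regular-up-to-the-boundary solutions — this alignment of compatibility conditions between the $\ep$-problem and the $\ep=0$ problem is, I expect, the main subtlety, and it is where the intrinsic reformulation $v_j(0)$ in Proposition~\ref{intrc-cd} does the heavy lifting.

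The heart of the argument is then the derivation of $\ep$-independent a priori estimates. The natural quantities to control are the intrinsic energies built from the covariant time derivatives: set $v_j = \n_t^j u$ and estimate $\sum_{j=0}^{k}\|v_j(t)\|_{L^2(\Om)}^2$ together with the spatial elliptic regularity that upgrades control of $\{v_j\}$ to control of $\p_t^i u$ in $W^{2k+1-2i,2}(\Om)$. I would differentiate the equation $j$ times in $t$ via the pull-back connection, commute $\n_t$ past $\tau$ (producing curvature terms $R(\p_t u,\p_x u)\p_x u$, which on $\U^2$ are explicit and harmless), integrate by parts in space, and exploit the fact that $\langle J(u)w,w\rangle=0$ so that the leading (undamped) dispersive term contributes nothing to the energy identity, while the $\ep$-term contributes a good (sign-definite) dissipation $\ep\|\n v_j\|_{L^2}^2$ that can simply be discarded. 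The boundary terms generated by integration by parts vanish precisely because of the compatibility conditions at $t=0$ propagated forward, or are absorbed — here one uses $\n_\nu v_j=0$ on $\p\Om$. Gronwall's inequality then yields a uniform time $T_0$ and uniform bounds, depending only on $\|u_0\|_{W^{2k+3,2}}$ (the statement says $W^{5,2}$ for the lifespan, reflecting that the lifespan is governed by a low norm while higher norms merely persist).

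With uniform estimates in hand, I would extract a weakly-$*$ convergent subsequence $u_\ep \rightharpoonup u$ in $\bigcap_i L^\infty([0,T_0],W^{2k+1-2i,2})$, use Aubin--Lions compactness to get strong convergence in lower norms (enough to pass to the limit in the nonlinear term $u_\ep\times\tau(u_\ep)$ and in $|u_\ep|=1$), and conclude that the limit $u$ solves \eqref{eq-LL} with $\frac{\p u}{\p\nu}=0$ and the stated regularity via lower semicontinuity of norms. Uniqueness follows by a standard energy argument for the difference of two solutions $w=u_1-u_2$: derive a differential inequality $\frac{d}{dt}\|w\|_{H^1}^2 \lesssim C(\|u_1\|,\|u_2\|)\|w\|_{H^1}^2$, again using the antisymmetry of $J(u)$ to kill the top-order term and the product/Sobolev estimates in dimension $m\le 3$ to control the remainder; Gronwall gives $w\equiv 0$. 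Finally, for $u_0\in C^\infty(\bar\Om)$ satisfying $C(k)$ for all $k$, applying the above for every $k$ gives $\p_t^i u\in L^\infty([0,T_0],W^{s,2})$ for all $s$, hence $u\in C^\infty(\bar\Om\times[0,T_0])$ by Sobolev embedding (noting the lifespan $T_0$ is the same for all $k$ since it depends only on $\|u_0\|_{W^{5,2}}$). I expect the genuinely delicate steps to be (i) verifying that the $\ep$-regularized parabolic problem is solvable up to the boundary under exactly the hypothesized compatibility conditions, and (ii) making the boundary integrals in the higher-order energy estimates vanish uniformly in $\ep$ — both of which hinge on a clean bookkeeping of how $\n_\nu v_j(0)=0$ interacts with the $\ep$-flow, which is the technical novelty the paper's "new approximation scheme" is presumably built to handle.
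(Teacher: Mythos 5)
There is a genuine gap at the step you yourself flag as the main subtlety, and it is fatal to the proposed scheme. You claim that the compatibility conditions \eqref{com-cond-1} for the Schr\"odinger flow are ``exactly what is needed'' so that the time derivatives $\p_t^j u_\ep|_{t=0}$ of the damped flow $\p_t u_\ep=(\ep I+u_\ep\times)\tau(u_\ep)$ satisfy the Neumann conditions required for parabolic regularity up to the boundary. This is true only for $j\le 1$. For $j\ge 2$ the initial time derivatives of the $\ep$-flow are obtained by iterating $(\ep I+J(u_0))$ against the Laplacian-type operator, e.g. $\n_t^2u_\ep|_{t=0}=(\ep I+J(u_0))\bigl(-\n^*\n+\cdots\bigr)(\ep I+J(u_0))\tau(u_0)$, which differs from $v_2(0)=J(u_0)(-\n^*\n+\cdots)J(u_0)\tau(u_0)$ by $O(\ep)$ terms such as $\ep(-\n^*\n+\cdots)\tau(u_0)$ whose normal derivatives on $\p\Om$ do \emph{not} vanish under \eqref{com-cond-1}. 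Hence the $\ep$-problem and the $\ep=0$ problem have genuinely different necessary compatibility conditions at orders $k\ge 2$; the paper states this explicitly when discussing the perturbed flow \eqref{para-SMF}, which ``only shares the same $1$-th order compatibility conditions'' with \eqref{eq-LL}. Under the hypotheses of the theorem the approximate solutions $u_\ep$ therefore fail to be $W^{2k+3,2}$-regular up to the boundary, the boundary integrals in your higher-order energy identities for $v_j=\n_t^j u_\ep$ ($j\ge 2$) do not vanish, and the uniform-in-$\ep$ estimates cannot be closed. This is precisely the obstruction that forced the authors' earlier work \cite{CW4} to assume the much stronger sufficient conditions $\frac{\p}{\p\nu}\p^j u_0|_{\p\Om}=0$ for all $j\le 2k$.

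The paper's actual route avoids regularizing the map equation at high order altogether. It first invokes Theorem \ref{thm1} (which needs only first-order compatibility) to get a $W^{5,2}$ solution $u$, then observes that $v_k=\n_t^k u$ solves a \emph{linear} Neumann problem \eqref{Nuemann-eq-v_k1} along $u$, and regularizes that linear problem instead. Lemma \ref{uniform-cc} shows the first-order compatibility conditions of the regularized linear problem are exactly $\n_\nu v_k(0)|_{\p\Om}=\n_\nu v_{k+1}(0)|_{\p\Om}=0$, i.e.\ precisely what \eqref{com-cond-1} provides; an induction on $k$ (property $\T_k\Rightarrow\T_{k+1}$, via the equivalent Sobolev-norm estimates \eqref{equiv-es} and energy estimates for $\p_t^i\om_\ep$, $i=0,1,2$) then upgrades the regularity step by step. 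Your proposal would need to be restructured along these lines — or supplemented with a construction of $\ep$-dependent corrected initial data satisfying the $\ep$-problem's own compatibility conditions, which is a substantial additional argument you have not supplied.
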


\begin{rem}
The case $k=0$ in Theorem \ref{main-thm} has already been established in our previous work \cite{CW2}. Consequently, Theorem~\ref{main-thm} in the present paper, together with Theorem~1.1 in~\cite{CW2}, provides a complete resolution of the local well-posedness problem for the Landau-Lifshitz equation with natural boundary conditions.
\end{rem}

\medskip
The rest of this paper is organized as follows. In Section \ref{s: strategy}, we outline the key strategies and main ideas behind the proof of Theorem \ref{main-thm}. In Section \ref{s: pre} we will introduce some fundamental notation pertaining to manifolds and Sobolev spaces, along with the derivation of essential preliminary lemmas. Section \ref{s: Sob-inerp-inq} is devoted to establishing some Sobolev-interpolation inequalities and their corollaries. Section \ref{s: com-cond} formulates the compatibility conditions for the boundary data and provides an intrinsic geometric characterization of these conditions. In Section \ref{s: high-eq-LL}, we derive higher-order evolution equations for the LL equation and develop preliminary results necessary for the proof of Theorem \ref{main-thm}. The complete proof of Theorem \ref{main-thm} is presented in Sections \ref{s: local-exist-appro-problem} and \ref{s: proof-main-thm}, where we detail the approximation arguments and establish uniform energy estimates for approximate solutions. Finally, Appendix \ref{s: parabolic-eq} provides a local existence theory for a unified parabolic system with Neumann boundary conditions.

\section{Strategies in the proof of Theorem \ref{main-thm}}\label{s: strategy}

In this section, we outline our strategies and main ideas to prove the main Theorem \ref{main-thm}.

\subsection{New approximation schemes for LL equation}\

We try to establish the local existence of regular solutions to the INB problem \eqref{eq-LL}, where the initial data $u_0$ satisfies the necessary compatibility conditions \eqref{com-cond-1}. The first difficulty we face is how to design approximation equations for the LL equation that preserve the original compatibility conditions for the initial data $u_0$?

In our previous work \cite{CW4}, we considered the initial-Neumman boundary value problem for the following parabolic perturbed LL equation
\begin{equation}\label{para-SMF}
\begin{cases}
\p_tu =(\ep I+u\times )\tau(u),&\text{(x,t)}\in \Om\times \Real^+,\\[1ex]
\frac{\p u}{\p \nu}=0, &\text{(x,t)}\in\p \Om\times \Real^+,\\[1ex]
u(x,0)=u_0: \Om\to \U^2,
\end{cases}
\end{equation}
which only shares the same $1$-th order compatibility conditions with the INB problem \eqref{eq-LL}, i.e.,
\[\n_\nu u_0=0, \quad \n_\nu v_1(0)=0.\]
Here, $I$ denotes the identity map and $\tau(u)=\De u+|\p u|^2u$. Consequently, we used solutions of \eqref{para-SMF} to approximate a $W^{5,2}$-regular solution of \eqref{eq-LL}. To establish the existence of highly regular solutions, we now must incorporate the $k$-th ($k\geq 2$) order compatibility conditions \eqref{com-cond-1} for $u_0$.  This necessitates the construction of new approximation schemes.

\medskip
A key observation of this paper is that the $k$-th ($k\geq 2$) order compatibility conditions \eqref{com-cond-1} are equivalent to the $1$-th order compatibility conditions of the INB problem for the Schr\"odinger system satisfied by $U_k=(u,v_1, \cdots, v_{k-1})$. Here, $u$ solves the LL equation \eqref{eq-LL}, and $v_i=\n^i_t u$ ($1\leq i\leq k-1$) satisfies the following intrinsic problem (see equation \eqref{eq-v_k} in Section \ref{s: high-eq-LL}):
\begin{equation}\label{Neumann-vi}
\begin{cases}
\n_t \om=u\times(-\n^*\n \om+R^{\U^2}(u)(\om, \n_j u)\n_j u+R_i)\\[1ex]
\frac{\p \om}{\p \nu}|_{\p\Om}=0,\\[1ex]
\om=v_i(0): \Om\to \Real^K.
\end{cases}
\end{equation}
Here, $\omega$ is an unknown vector field in the pull-back bundle $u^*(T\U^2)$, the term $R_i$ depends only on $(u,v_1,\cdots,v_{i-1})$
(see Section \ref{s: high-eq-LL} for a precise formula of $R_i$) and $\n^*$ denotes the dual operator of $\n$. 

Moreover, the problem \eqref{Neumann-vi} and its parabolic perturbed problem are of the same $1$-th order compatibility conditions. We formulate this observation as the following lemma.

\begin{lem}\label{uniform-cc}
Let $k\in \mathbb{N}$ with $k\geq 2$. Assume that $u_0\in W^{2k+2,2}(\Om)$ satisfies the $1$-th order compatibility conditions \eqref{com-cond-1}. For any $\ep\in [0,1)$ and $1\leq i\leq k-1$, the 1-th order compatibility conditions for the intrinsically parabolic perturbed problem (see equation \eqref{eq-para-v_k} in Section \ref{s: high-eq-LL}) of \eqref{Neumann-vi}
\begin{equation}\label{para-Nuemann-eq-vi}
\begin{cases}
\n_t \om_\ep=(\ep I+u\times)(-\n^*\n \om_\ep+R^{\U^2}(u)(\om_\ep, \n_j u)\n_j u+R_i),\\[1ex]
\frac{\p \om_\ep}{\p \nu}|_{\p\Om}=0,\\[1ex]
\om_\ep(0)=v_i(0): \Om\to \Real^K
\end{cases}
\end{equation}
are equivalent to
\[\n_\nu v_i(0)|_{\p\Om}=0\quad \text{and}\quad \n_\nu v_{i+1}(0)|_{\p\Om}=0.\]
\end{lem}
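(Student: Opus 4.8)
The plan is to treat this as a purely algebraic identity about the initial data: the compatibility conditions for \eqref{para-Nuemann-eq-vi} are obtained by formally differentiating the equation in $t$ and restricting the outward normal derivative to $\p\Om$, so no solution is needed. Write $J(u_0)=u_0\times$ for the complex structure on $T_{u_0}\U^2$ and put
\[
P:=-\n^*\n v_i(0)+R^{\U^2}(u_0)(v_i(0),\n_j u_0)\n_j u_0+R_i|_{t=0},
\]
which, by the formula for $R_i$ recorded in Section \ref{s: high-eq-LL}, is a section of $u_0^*(T\U^2)$. Unwinding the definition, the first order compatibility conditions for \eqref{para-Nuemann-eq-vi} are the zeroth order one $\n_\nu\om_\ep(0)|_{\p\Om}=\n_\nu v_i(0)|_{\p\Om}=0$ together with the first order one $\n_\nu(\n_t\om_\ep|_{t=0})|_{\p\Om}=0$, where $\n_t\om_\ep|_{t=0}$ is read off from the equation as $(\ep I+J(u_0))P$. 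Since the zeroth order part is literally $\n_\nu v_i(0)|_{\p\Om}=0$, the task reduces to showing that, granting this, the first order part is equivalent to $\n_\nu v_{i+1}(0)|_{\p\Om}=0$.

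The core is a short computation in linear algebra. Comparing \eqref{para-Nuemann-eq-vi} with the unperturbed intrinsic equation \eqref{eq-v_k} for $v_{i+1}=\n_t v_i$ and evaluating at $t=0$ gives $v_{i+1}(0)=J(u_0)P$; since $P$ is tangent to $\U^2$ at $u_0$ and $J(u_0)^2=-\mathrm{id}$ there, this inverts to $P=-J(u_0)v_{i+1}(0)$, whence
\[
\n_t\om_\ep|_{t=0}=(\ep I+J(u_0))\big(-J(u_0)v_{i+1}(0)\big)=\big(I-\ep J(u_0)\big)v_{i+1}(0).
\]
I would then apply $\n_\nu$ and restrict to $\p\Om$. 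Because $\U^2$ is K\"ahler, $J$ is parallel for the pull-back connection, so $\n_\nu$ commutes with the bundle endomorphism $I-\ep J(u_0)$, giving $\n_\nu(\n_t\om_\ep|_{t=0})=(I-\ep J(u_0))\n_\nu v_{i+1}(0)$ along $\p\Om$. For $\ep\in[0,1)$ the endomorphism $I-\ep J(u_0)$ of $T_{u_0}\U^2$ is invertible, with inverse $\tfrac{1}{1+\ep^2}(I+\ep J(u_0))$ since $(I-\ep J)(I+\ep J)=(1+\ep^2)I$, and $\n_\nu v_{i+1}(0)$ is again a section of $u_0^*(T\U^2)$. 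Hence $\n_\nu(\n_t\om_\ep|_{t=0})|_{\p\Om}=0$ if and only if $\n_\nu v_{i+1}(0)|_{\p\Om}=0$, which combined with the previous paragraph yields the lemma.

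The analytic content is negligible, so the only points needing care—and the main (mild) obstacle—are bookkeeping. First, one must verify from the explicit expression for $R_i$ in Section \ref{s: high-eq-LL} that $R_i|_{t=0}$, hence $P$ and $\n_\nu v_{i+1}(0)$, are genuinely sections of $u_0^*(T\U^2)$, so that the inversion $P=-J(u_0)v_{i+1}(0)$ and the invertibility of $I-\ep J(u_0)$ can be applied on the tangent spaces of $\U^2$. Second, one should confirm that the intrinsic normal derivative $\n_\nu$ used in the compatibility conditions may be interchanged with the ambient normal derivative $\p/\p\nu$ of the $\Real^K$-valued fields along $\p\Om$; this is immediate from the zeroth order condition $\n_\nu u_0|_{\p\Om}=0$, which forces the connection corrections in both $\n_\nu$ and $\n_t$ to be pointwise normal to $\U^2$ on $\p\Om$ and hence not to affect the vanishing of tangential parts. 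Beyond these verifications I anticipate no genuine difficulty.
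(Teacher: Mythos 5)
Your proposal is correct and follows essentially the same route as the paper's proof: both reduce to the identity $\n_t\om_\ep|_{t=0}=(\ep I+J(u_0))(-J(u_0)v_{i+1}(0))=v_{i+1}(0)-\ep J(u_0)v_{i+1}(0)$, use $\n J(u_0)=0$ to commute $\n_\nu$ past the endomorphism, and conclude via the invertibility of $I-\ep J(u_0)$ on $T_{u_0}\U^2$ (which the paper leaves implicit in the phrase ``which simplifies to''). Your explicit verification of the inverse $\tfrac{1}{1+\ep^2}(I+\ep J(u_0))$ and of the interchange of $\n_\nu$ with $\p/\p\nu$ only makes explicit what the paper takes for granted.
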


\begin{proof}
For each $1\leq i\leq k-1$, since $\om_\ep$ is a section in $u^*(T\U^2)$, we have
\[\n_t \om_\ep|_{t=0}=\p_t \om_\ep|_{t=0}+\<v_1(0), v_i(0)\>u_0.\]
It follows that the first order compatibility conditions of the initial data $v_i(0)$ for the problem \eqref{para-Nuemann-eq-vi}, i.e.,
\[\frac{\p}{\p \nu} v_i(0)|_{\p\Om}=0\quad \text{and}\quad\frac{\p}{\p \nu}\p_t\om_\ep|_{\{t=0\}\times{\p\Om}}=0,\]
are equivalent to
\[\n_\nu v_i(0)|_{\p\Om}=0\quad \text{and}\quad \n_\nu\n_t\om_\ep|_{\{t=0\}\times{\p\Om}}=0.\]

On the other hand, since $v_{i+1}(0)=\n^{i+1}_t u|_{t=0}$ and $\n^i_t u$ satisfies the equation \eqref{Neumann-vi}, we obtain
\[v_{i+1}(0)=J(u_0)\(-\n^*\n v_i(0)+R^{\U^2}(u)(v_i(0), \n_j u_0)\n_j u_0+R_i|_{t=0}\),\]
where we denote $J(u_0)=u_0\times$. By the construction of parabolic perturbed equation \eqref{para-Nuemann-eq-vi}, we then conclude that
\begin{align*}
\n_t\om_\ep|_{t=0}=&(\ep I+J(u_0))\(-\n^*\n v_i(0)+R^{\U^2}(u)(v_i(0), \n_j u_0)\n_j u_0+R_i|_{t=0}\)\\
=&-(\ep I+J(u_0))J(u_0)v_{i+1}(0)\\
=&-\ep J(u_0)v_{i+1}(0)+v_{i+1}(0).	
\end{align*}

The integrability of the complex structure $J(u_0)$, i.e., $\n^{\U^2}J(u_0)=0$, implies
\[0=\n_\nu\n_t\om_\ep|_{\{t=0\}\times{\p\Om}}=-\ep J(u_0)\n_\nu v_{i+1}(0)|_{\p\Om}+\n_\nu v_{i+1}(0)|_{\p\Om},\]
which simplifies to
\[\n_\nu v_{i+1}(0)|_{\p\Om}=0.\]

Therefore, the proof is completed.
\end{proof}

This lemma allows us to use solutions of problem \eqref{para-Nuemann-eq-vi} to approximate $W^{5,2}$-solutions to \eqref{Neumann-vi} for each $1\leq i\leq k-1$, which yields higher regularity of solutions for \eqref{eq-LL} when the initial data satisfies the $k$-th order condition \eqref{com-cond-1}.

\medskip
Let $U_k=(u,v_1,\cdots, v_{k-1})$. The following diagram summarizes our intrinsic approximation scheme:
 \begin{center}
 	\begin{tikzpicture}[
 		node distance=3cm,
 		block/.style={
 			draw,
 			rectangle,
 			minimum width=3cm,
 			minimum height=2.5cm,
 			align=center,
 			rounded corners=3pt,
 			fill=blue!5,
 			text width=2.8cm
 		},
 		arrow/.style={-Stealth, thick, blue!50!black},
 		label/.style={midway, above, font=\small}
 		]
 		
 		% Nodes
 		\node[block] (approximation) {
 			Parabolic perturbed \\
 			system for $U_k$};
 		
 		\node[block, right=of approximation] (original) {Original system \\
 			satisfied by $U_k$};
 		
 		% Arrow
 		\draw[arrow] (approximation) -- node[label] {$\varepsilon \to 0$} (original);
 		
 	\end{tikzpicture}
 \end{center}
The equivalence between the corresponding compatibility conditions is captured by the following:
\begin{center}
\begin{tikzpicture}[
	node distance=3cm,
	block/.style={
		draw,
		rectangle,
		minimum width=3cm,
		minimum height=2.5cm,
		align=center,
		rounded corners=5pt,
		fill=blue!5,
		text width=2.8cm,
		font=\small
	},
	arrow/.style={<->, double, thick, blue!50!black},
	label/.style={midway, above, font=\small, yshift=2pt}
	]
	
	% Nodes
	\node[block] (first-order) {1-th order \\
		compatibility conditions \\
		for the parabolic perturbed \\
 			system of $U_k$};
	
	\node[block, right=of first-order] (kth-order) {$k$-th order \\
		compatibility conditions \\
		for Schr\"odinger flow \\
		(i.e., \eqref{com-cond-1})};
	
	% Arrow
	\draw[arrow] (first-order) -- node[label] {iff} (kth-order);
	
\end{tikzpicture}
\end{center}

\subsection{The outline of the proof of Theorem \ref{main-thm}}\

We proceed by induction to establish Theorem \ref{main-thm}. Let $u$ and $T_0$ be as given in Theorem \ref{thm1}. For each $k\geq 1$, we demonstrate that the solution $u$ satisfies the following property $\T_k$:
\begin{itemize}
\item Assume that $u_0\in W^{2k+3,2}(\Om)$ satisfies the $k$-th order compatibility conditions \eqref{com-cond-1}, then
\begin{align*}
\p_t^iu\in L^\infty([0,T_0],W^{2k+3-2i,2}(\Om)),
\end{align*}
for $0\leq i\leq k+1$.
\end{itemize}

Recall that property $\T_1$ has been established in \cite{CW4} (see Theorem \ref{thm1}). Assume that $\T_k$ with $k\geq 1$ holds, we establish $\T_{k+1}$ through the following steps:
\medskip

\textbf{1. Approximation equation}: Under the $(k+1)$-order compatibility conditions of $u_0$, we consider the following initial-Neumann boundary value problem satisfied by $v_{k}$ (i.e., the extrinsic version of the problem \eqref{Neumann-vi} with $i=k$):
\begin{equation}\label{Nuemann-eq-v_k1}
\begin{cases}
\p_t \om+\<v_1,\om\>u=u\times (\De \om+|\p u|^2\om+R_k)\\[1ex]
\frac{\p \om}{\p \nu}|_{\p\Om}=0,\\[1ex]
\om=v_k(0): \Om\to \Real^K.
\end{cases}
\end{equation}

Noting that $v_k\in L^\infty([0,T_0], W^{3,2}(\Om))$ is a strong solution to the above problem \eqref{Nuemann-eq-v_k1}. To enhance the regularity of $v_k$, we introduce a parabolic approximation scheme for problem \eqref{Nuemann-eq-v_k1} (i.e., the extrinsic version of problem \eqref{para-Nuemann-eq-vi} with $i=k$):
\begin{equation}\label{para-Nuemann-eq-v_k1}
\begin{cases}
\p_t \om_\ep+\<v_1,\om_\ep\>u=\ep (\De \om_\ep+2\<\p u, \p \om_\ep\>u+\<\De u, \om_\ep\>u+|\p u|^2\om_\ep+R_k)\\
\quad\quad\quad\quad\quad\quad \quad \quad \,\, + u\times(\De \om_\ep+|\p u|^2\om_\ep+R_k ),\\[1ex]
\frac{\p \om_\ep}{\p \nu}|_{\p\Om}=0,\\[1ex]
\om_\ep(0)=v_k(0): \Om\to \Real^K,
\end{cases}
\end{equation}
where $\om_\ep$ serves as an approximating sequence for $v_k$.

By Lemma \ref{uniform-cc}, both problems \eqref{Nuemann-eq-v_k1} and \eqref{para-Nuemann-eq-v_k1} share identical first order compatibility conditions, ensuring the validity of our approximation approach.
\medskip

\textbf{2. Improvement in regularity of $v_k$:} We take the following process to explain the improvement in regularity for $v_k$:
\begin{itemize}
\item[$(1)$] Obtain solutions $\om_\ep\in C^0([0,T_0], W^{5,2}(\Om))\cap L^2([0,T_0], W^{6,2}(\Om))$ to the problem \eqref{para-Nuemann-eq-v_k1} via Galerkin methods with appropriate energy estimates.
	
\item[$(2)$] Establish $\ep$-independent $W^{5,2}$-estimates for $\om_\ep$ and pass to the limit $\epsilon \to 0$ to obtain a solution $\om \in L^\infty([0,T_0], W^{5,2}(\Om))$ to the problem \eqref{Nuemann-eq-v_k1}. Uniqueness yields $v_k=\om$, completing the proof of $\mathcal{T}_{k+1}$.
\end{itemize}

\textbf{Technical challenges in energy estimates:} Establishing uniform $W^{5,2}$-energy estimates for approximate solutions $\om_\ep$ presents technical challenges, due to the limited space of admissible test functions (which vanish on boundary terms when integration by parts is applied).  Our strategy for overcoming this difficulty consists of the following key steps.

First, we establish the following innovative equivalent estimates for the Sobolev norms of $\om_\ep$ (see~~Lemmas \ref{equiv-es-om-ep} and \ref{equiv-W^{5,2}-es-om}):
\begin{equation}\label{equiv-es}
\begin{aligned}
	\norm{\om_\ep}^2_{W^{2,2}}\leq &C(\norm{\p_t \om_\ep}^2_{L^2}+\norm{\om_\ep}_{L^2}+1),\\
	\norm{\om_\ep}^2_{W^{4,2}}
	\leq &C(\norm{\p^2_t \om_\ep}^2_{L^2}+\norm{\p_t \om_\ep}^2_{L^2}+\norm{\om_\ep}_{L^2}+1),\\
	\norm{\om_\ep}^2_{W^{5,2}}
	\leq &C(\norm{(\p^2_t \om_\ep)^{\top}}^2_{W^{1,2}}+\norm{\p_t \om_\ep}^2_{L^2}+\norm{\om_\ep}_{L^2}+1),
\end{aligned}
\end{equation}
where
$$(\p^2_t \om_\ep)^{\top}=\p^2_t \om_\ep-\<\p^2_t \om_\ep, u\>u\in T_u\U^2$$
denotes the tangential component of $\p^2_t \om_\ep$ in $T_u\U^2$. These estimates reveal that establishing uniform $W^{5,2}$-energy bounds for $\om_\ep$ reduces to controlling the following composite energy:
\begin{align}
	\mathscr{E}(\om_\ep):=\norm{\om_\ep}^2_{L^2}+\norm{\p_t\om_\ep}^2_{L^2}+\norm{(\p^2_t \om_\ep)^\top}^2_{W^{1,2}}.\label{E}
\end{align}
Remarkably, this approach avoids the derivative loss that would occur in direct estimates for the $W^{5,2}$-norm of $\om_\ep$.

%Furthermore, the initial-Neumann boundary conditions $\frac{\p}{\p \nu}\p^i_{t}\om_\ep|_{\p \Om\times [0,T_0]} = 0$ (for $i = 0,1,2$) allow the use of $\p^i_t\om_\ep$ and $\De \p^i_t \om_\ep$ as test functions for the problem \eqref{para-Nuemann-eq-v_k1} during the process of energy estimates. This naturally leads us to consider the time-differentiated equations for $\p_t \om_\ep$ and $\p^2_t \om_\ep$, and then employ $\p^i_t\om_\ep$ and $\De \p^i_t \om_\ep$ ($i=0,1,2$) as test functions for these equations to establish uniform bounds for $\mathscr{E}(\om_\ep)$.

Second, we exploit the geometric structures of the target manifold $(\U^2, u\times)\hookrightarrow(\Real^3,\times)$ to eliminate certain high-order derivative terms when deriving uniform estimates for $\mathscr{E}(\om_\ep)$. These geometric structures include
\begin{itemize}
\item[$(1)$] Orthogonality: For any $X\in T_u \U^2$,
      \[\<X,\,u\>=0;\]
\item[$(2)$] Lie algebra structure: $(\Real^3,\times)$ is a Lie algebra;
\item[$(3)$] Triple product vanishing: For any $X_j\in T_u\U^2$~($j=1,2,3$),
	\[\<X_1\times X_2,\,X_3\>=0.\]
\end{itemize}

By systematically exploiting these geometric properties, all remaining terms in the energy estimates can be rigorously controlled through our equivalent Sobolev norm estimates for $\om_\ep$ (i.e. \eqref{equiv-es}). The complete technical details of this analysis are provided in Subsections  \ref{ss: uniform-en-es-1} and \ref{ss: uniform-en-es-2}.

\section{Preliminary}\label{s: pre}

\subsection{Geometric setup and Sobolev spaces}\

Let $\Om$ be a smooth bounded domain in $\Real^m$ with natural coordinates $\{x_1,\cdots, x_m\}$, and let $(N,J, h)$ be a compact K\"aher manifold.  By the Nash embedding theorem, we assume that $N$ is an isometrically embedded submanifold in a Euclidean space $\Real^K$. We denote by $A(\cdot, \cdot)$ the second fundamental form of $N$ in $\mathbb{R}^K$. For a smooth map $u:\Om\times [0,T]\to N$, let $\n$ denote the pull-back connection $u^*\n^N$, where $\n^N$ is the Levi-Civita connection induced by the metric $h$. For simplicity, we set
\[\n_t=\n_{\p_t} \quad\text{and}\quad \n_{j}=\n_{\p_{x_j}}\,\, \text{with}\,\, j=1,\cdots, m.\]

The tension field of $u$ is defined as
\[\tau(u)=\tr\n du=\n_j\n_ju=\De u+A(u)(\p u,\p u),\]
where $\p=(\p_1, \cdots, \p_m)$ is the Euclidean derivative, $\De$ is the Laplacian operator on $\Om$. Then, the Schr\"odinger flow is defined by the intrinsic equation
\[\n_t u=J(u)\tau(u),\]
while the parabolic perturbed Schr\"odinger flow satisfies
\[\n_t u=(\ep I+J(u)) \tau(u),\]
where $I$ denotes the identity map.

In the special case where $N$ is the standard unit sphere $\U^2$ in $\Real^3$ and the complex structure $J(u)=u\times: T_u\U^2\to T_u \U^2$, the LL equation and its parabolic perturbed equation (i.e. LLG equation) can be expressed intrinsically as
\[\n_t u=u\times \tau(u),\,\,\n_t u=(\ep I+u\times )\tau (u).\]
Here, the tension field is just
\[\tau(u)=\De u+|\p u|^2 u.\]

\medskip
Next, we introduce some notation on Sobolev spaces which are used in the subsequent context of this paper. For any $k\in \mathbb{N}$, we set
\[\p^k u=(\p^ku^1,\cdots, \p^ku^K).\]
The classical Sobolev space $W^{k,p}(\Om)$ is defined as the complete space of smooth function $u: \Om \to \Real^{K}$ under the following norm
\[\norm{u}_{W^{k,p}}=:\left\{\sum_{j=0}^{k}\int_{\Om}|\p^ju|^pdx\right\}^{\frac{1}{p}}.\]
Moreover, we also define the following
\[W^{k,2}(\Om,N)=\{u\in W^{k,2}(\Om)|\,\, u(x)\in N\,\,\text{for a.e.}\,\,x\in \Om\}.\]

More generally, let $(B, \norm{\cdot}_B)$ be a Banach space and $f:[0,T]\to B$ be a map. For any $p>0$ and $T>0$, we define
\[\norm{f}_{L^p([0,T], B)}:=\(\int_{0}^{T}\norm{f}^p_{B}dt\)^{\frac{1}{p}},\]
and
\[L^p([0,T],B):=\{f:[0,T]\to B:\norm{f}_{L^p([0,T],B)}<\infty\}.\]
In particular, we denote
\begin{align*}
	&L^{p}([0,T],W^{k,2}(\Om,N))\\
	=&\{u\in L^{p}([0,T],W^{k,2}(\Om)):u(x,t)\in N\,\,\text{for a.e. (x,t)}\in \Om\times[0,T]\},	
\end{align*}
where $p\geq 1$ and $k\in \mathbb{N}$.

\medskip
\subsection{Preliminary lemmas}\

This subsection collects several technical tools that will be essential for our subsequent analysis. We begin with a result on equivalent Sobolev norms under Neumann boundary conditions, see \cite{Weh} for a proof.

\begin{lem}\label{eq-norm}
Let $\Om$ be a bounded smooth domain in $\Real^{m}$ and $k\in\mathbb{N}$. Then, there exists a constant $C_{k,m}$ such that, for all $u\in W^{k+2,2}(\Om)$ with $\frac{\p u}{\p \nu}|_{\p\Om}=0$,
\begin{equation}\label{eq-n}
\norm{u}_{W^{k+2,2}(\Om)}\leq C_{k,m}(\norm{u}_{L^{2}(\Om)}+\norm{\De u}_{W^{k,2}(\Om)}).
\end{equation}	
\end{lem}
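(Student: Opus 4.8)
The statement to prove is the equivalent Sobolev norm estimate under Neumann boundary conditions:
\[
\norm{u}_{W^{k+2,2}(\Om)}\leq C_{k,m}\left(\norm{u}_{L^{2}(\Om)}+\norm{\De u}_{W^{k,2}(\Om)}\right)
\]
for $u\in W^{k+2,2}(\Om)$ with $\p u/\p\nu|_{\p\Om}=0$.

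\medskip
\textbf{Proof proposal.}

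The plan is to deduce this from standard elliptic regularity theory for the Neumann problem. First I would observe that the reverse inequality is trivial, so the content is the bound of the full $W^{k+2,2}$-norm by the $L^2$-norm plus the $W^{k,2}$-norm of $\De u$. I would set $f=\De u\in W^{k,2}(\Om)$ and view $u$ as a (weak) solution of the Neumann problem
\[
\begin{cases}
\De u = f &\text{in }\Om,\\[0.5ex]
\dfrac{\p u}{\p\nu}=0 &\text{on }\p\Om.
\end{cases}
\]
Since $\Om$ is a bounded domain with smooth boundary, the classical Agmon--Douglis--Nirenberg elliptic estimates for the Laplacian with Neumann boundary conditions apply componentwise (the system decouples into $K$ scalar equations), giving
\[
\norm{u}_{W^{k+2,2}(\Om)}\leq C\left(\norm{\De u}_{W^{k,2}(\Om)}+\norm{u}_{W^{k+1,2}(\Om)}\right),
\]
where the lower-order term $\norm{u}_{W^{k+1,2}}$ appears because the Neumann Laplacian has a nontrivial kernel (the constants) and hence is not injective. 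The second step is to absorb the intermediate term $\norm{u}_{W^{k+1,2}(\Om)}$. By the Gagliardo--Nirenberg interpolation inequality on the bounded smooth domain $\Om$, for any $\delta>0$ there is $C_\delta$ with
\[
\norm{u}_{W^{k+1,2}(\Om)}\leq \delta\,\norm{u}_{W^{k+2,2}(\Om)}+C_\delta\,\norm{u}_{L^2(\Om)}.
\]
Plugging this into the elliptic estimate and choosing $\delta$ small enough that $C\delta\leq\frac12$, the term $\tfrac12\norm{u}_{W^{k+2,2}(\Om)}$ can be moved to the left-hand side, yielding
\[
\norm{u}_{W^{k+2,2}(\Om)}\leq 2C\left(\norm{\De u}_{W^{k,2}(\Om)}+C_\delta\norm{u}_{L^2(\Om)}\right),
\]
which is the claimed inequality with $C_{k,m}=2C\max\{1,C_\delta\}$.

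\medskip
The main technical point — the step I expect to require the most care — is invoking the higher-order boundary elliptic estimate for the Neumann problem: one must justify that a function $u\in W^{k+2,2}(\Om)$ satisfying $\p u/\p\nu=0$ on $\p\Om$ (in the trace sense) is indeed a weak solution of the stated Neumann problem with datum $f=\De u$, so that the ADN a priori estimates are applicable. This is where the smoothness of $\p\Om$ is used, both to make sense of the conormal trace and to run the standard localization/flattening-the-boundary argument that produces the $W^{k+2,2}$ bound. Once that is in place, everything else is the routine interpolation-and-absorption scheme above. An alternative, essentially equivalent route is to argue by induction on $k$: the base case $k=0$ is the classical $W^{2,2}$ Neumann estimate, and the inductive step differentiates the equation tangentially, uses that tangential derivatives preserve (a suitably modified) Neumann condition, and handles the normal derivative via the equation $\p_\nu^2 u = \De u - \De_{\p\Om}u - (\text{lower order})$ in boundary normal coordinates; I would only fall back on this if a self-contained argument were preferred over citing \cite{Weh}.
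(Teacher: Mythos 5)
Your proof is correct and follows the standard route — Agmon--Douglis--Nirenberg elliptic estimates for the Neumann problem combined with an Ehrling-type interpolation to absorb the intermediate norm — which is essentially the argument in the reference \cite{Weh} that the paper cites in lieu of giving its own proof. No gaps; the point you flag about justifying that $u$ is a weak solution of the Neumann problem with datum $f=\De u$ is indeed the only place where care (and the smoothness of $\p\Om$) is needed.
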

The above lemma implies that we can define the $W^{k+2,2}$-norm of $u$ as the following
$$\norm{u}_{W^{k+2,2}(\Om)}:=\norm{u}_{L^2(\Om)}+\norm{\De u}_{W^{k,2}(\Om)}.$$

\medskip
In order to establish uniform estimates and the convergence of solutions to the approximation equation constructed in subsequent sections, we also require the following Gronwall's inequality and the classical Aubin-Simon compactness results.

\begin{lem}\label{Gron-inq}
Let $y$ be a continuous function that is nonnegative on $[0,T_0]$. Assume that there exists a $y_0>0$ such that for $0\leq t\leq T_0$,
	\[y(t)\leq y_0+C\int_{0}^{t}y(s)ds.\]
Then we have
\[\sup_{0\leq t\leq T_0}y(t)\leq \exp(CT_0)y_0.\]
\end{lem}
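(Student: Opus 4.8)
The statement is the classical Gronwall inequality in integral form, and the natural route is to convert the integral inequality into a differential one for an auxiliary quantity. The plan is as follows. First I would introduce the function
\[
z(t) := y_0 + C\int_{0}^{t} y(s)\,ds, \qquad 0\leq t\leq T_0.
\]
Since $y$ is continuous, $z$ is $C^1$ on $[0,T_0]$ with $z(0)=y_0>0$ and $z'(t)=C\,y(t)$. The hypothesis says precisely $y(t)\leq z(t)$ for every $t\in[0,T_0]$, and since $C\geq 0$ this gives the differential inequality $z'(t)\leq C\,z(t)$.

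Next I would remove the $Cz$ term by the standard integrating-factor trick: set $w(t):=z(t)\,e^{-Ct}$, so that $w$ is $C^1$ on $[0,T_0]$ with
\[
w'(t) = \big(z'(t) - C\,z(t)\big)e^{-Ct} \leq 0.
\]
Hence $w$ is nonincreasing, so $w(t)\leq w(0)=z(0)=y_0$ for all $t\in[0,T_0]$, which rearranges to $z(t)\leq y_0\,e^{Ct}$. Combining this with $y(t)\leq z(t)$ and the monotonicity of $t\mapsto e^{Ct}$ yields $y(t)\leq y_0\,e^{Ct}\leq y_0\,e^{CT_0}=\exp(CT_0)\,y_0$ for every $t\in[0,T_0]$, and taking the supremum over $t$ gives the claim.

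There is no genuine obstacle here; the only points requiring a line of care are the differentiability of $z$ (which follows from continuity of $y$ via the fundamental theorem of calculus) and the sign of $C$, which must be nonnegative for the inequality $y\leq z$ to pass through the multiplication by $C$ — this is implicit in the intended application, where $C$ arises as a constant in an energy estimate. If one prefers to avoid even the mild regularity discussion, an alternative is to iterate the integral inequality: substituting the bound for $y$ back into itself $n$ times produces $y(t)\leq y_0\sum_{j=0}^{n-1}\frac{(Ct)^j}{j!} + C^n\int_{0}^{t}\frac{(t-s)^{n-1}}{(n-1)!}y(s)\,ds$, and letting $n\to\infty$ (using continuity, hence boundedness, of $y$ on $[0,T_0]$ to kill the remainder) recovers $y(t)\leq y_0 e^{Ct}$. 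I would present the integrating-factor argument as the main proof since it is shortest.
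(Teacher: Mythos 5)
Your integrating-factor argument is the standard and correct proof of the integral form of Gronwall's inequality, and your side remarks (differentiability of $z$ via the fundamental theorem of calculus, and the implicit requirement $C\geq 0$) are exactly the right points of care. The paper states this lemma as a classical fact without proof, so there is nothing to compare against; your write-up would serve as a complete proof.
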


A direct corollary of Lemma \ref{Gron-inq} is as follows.
\begin{lem}\label{ode}
Let $y:\Real^+\to \Real$ be a $W^{1,1}$ function such that
	\begin{equation*}
		\begin{cases}
			y^\prime\leq Cy,\\[1ex]
			y(0)\leq y_0.	
		\end{cases}
	\end{equation*}
Then we have
\[\sup_{0\leq t\leq T_0}y(t)\leq \exp(CT_0)y_0.\]
\end{lem}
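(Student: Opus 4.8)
The final statement to be proved is Lemma~\ref{ode}, which follows from Gronwall's inequality (Lemma~\ref{Gron-inq}). The plan is to reduce the differential inequality to the integral inequality hypothesized in Lemma~\ref{Gron-inq} by integrating, and then invoke that lemma directly. First I would note that since $y\in W^{1,1}(\Real^+)$, the fundamental theorem of calculus for absolutely continuous functions applies, so for every $t\geq 0$ we may write $y(t)=y(0)+\int_0^t y'(s)\,ds$. Substituting the pointwise bound $y'\leq Cy$ (valid a.e.) and the initial bound $y(0)\leq y_0$ into this identity gives
\[
y(t)\leq y_0+C\int_0^t y(s)\,ds,\qquad 0\leq t\leq T_0.
\]

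Next I would check the remaining hypotheses of Lemma~\ref{Gron-inq}: a $W^{1,1}$ function on an interval is (absolutely) continuous, hence continuous on $[0,T_0]$; and nonnegativity of $y$ on $[0,T_0]$ is needed to apply Gronwall — this is either part of the standing assumptions or follows from $y(0)\le y_0$ together with the sign structure, and in any case one only needs $y$ continuous and the integral inequality above to run the standard Gronwall argument, since $C\int_0^t y(s)\,ds$ can be handled by the usual integrating-factor trick regardless of sign. With the integral inequality in hand, Lemma~\ref{Gron-inq} yields immediately
\[
\sup_{0\leq t\leq T_0} y(t)\leq \exp(CT_0)\,y_0,
\]
which is exactly the desired conclusion.

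\textbf{Main obstacle.} There is essentially no substantive obstacle here: the only point requiring a moment's care is the passage from the a.e.\ pointwise differential inequality to the integral inequality, which is legitimate precisely because the $W^{1,1}$ hypothesis guarantees absolute continuity of $y$ and hence the validity of $y(t)-y(0)=\int_0^t y'$. Once that is observed, the result is a verbatim application of Lemma~\ref{Gron-inq}, so the proof is very short.
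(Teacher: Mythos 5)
Your proof is correct and matches the paper's (implicit) argument: the paper presents Lemma \ref{ode} as a direct corollary of Lemma \ref{Gron-inq}, obtained exactly as you do by integrating the differential inequality via absolute continuity of $W^{1,1}$ functions and then applying Gronwall. Your extra remarks on continuity and nonnegativity are appropriate bookkeeping and do not change the route.
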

%\begin{proof}
%Let
%$$w(t)=y_0+\int_{0}^{t}f(y(s))ds.$$
%It is easy to see that $w$ is a nondecreasing $C^1$ function, which satisfies
%\begin{equation*}
%\begin{cases}
%w^\prime=f(y(t))\leq f(w(t)),\\[1ex]
%w(0)=y_0\leq z_0.	
%\end{cases}
%\end{equation*}
%Here we have used the fact that $f$ is positive and nondecreasing. Then, the classical ODE comparison theorem tells us that
%\[w(t)\leq z(t)\]
%for any $t\in [0,T^*)$.
%
%Therefore, we get the desired result since $y(t)\leq w(t)$.
%\end{proof}

Let $1\leq p, r\leq \infty$, $X$ and $Y$ be two Banach spaces. For simplicity, we define
\[E_{p,r}=\left\{f\in L^{p}((0,T), X), \frac{d f}{dt}\in L^{r}((0,T), Y)\right\}.\]
In particular, if we take $X=W^{k+2,2}(\Om)$ and $Y=W^{k,2}(\Om)$, which are two Hilbert spaces, we set
\[E_{2,2}=\left\{f\in L^{2}((0,T),W^{k+2,2}(\Om)),\frac{\p f}{\p t}\in L^{2}((0,T), W^{k,2}(\Om))\right\}.\]
The following two embedding theorems can be found in \cite{BF,Sim}.

\begin{lem}[Theorem II.5.16 in \cite{BF}]\label{A-S}
Let $X\subset B\subset Y$ be Banach spaces, and $1\leq p,q,r\leq \infty$. Suppose that the embedding $B\hookrightarrow Y$ is continuous and that the embedding $X\hookrightarrow B$ is compact. Then the following properties are true.
\begin{itemize}
\item[$(1)$] If $p< \infty$, the embedding $E_{p,r}$ in $L^p((0,T), B)$ is compact.
\item[$(2)$] If $p< \infty$ and $p<q$, the embedding $E_{p,r}\cap L^q((0,T), B)$ in $L^s((0,T), B)$ is compact for all $1\leq s<q$.
\item[$(3)$] If $p=\infty$ and $r>1$, the embedding of $E_{p,r}$ in $C^0([0,T], B)$ is compact.
\end{itemize}
\end{lem}
\begin{lem}[Theorem II.5.14 in \cite{BF}]\label{C^0-em}
Let $k\in \mathbb{N}$, then the space $E_{2,2}$ is continuously embedded in $C^0([0,T], W^{k+1,2}(\Om))$.
\end{lem}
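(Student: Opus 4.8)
The statement is a special instance of a classical abstract interpolation theorem, and the plan is to deduce it from that theorem together with the identification of intermediate Sobolev spaces as interpolation spaces. The key observation is that, since $\Om$ is a smooth bounded domain, the couple $\(W^{k+2,2}(\Om),W^{k,2}(\Om)\)$ satisfies
\[\left[W^{k+2,2}(\Om),\,W^{k,2}(\Om)\right]_{1/2}=W^{k+1,2}(\Om)\]
with equivalent norms; this is standard and follows from the corresponding identity on $\Real^m$ (immediate from the Plancherel description of $W^{s,2}$) by transporting it through a bounded linear Sobolev extension operator $E\colon W^{s,2}(\Om)\to W^{s,2}(\Real^m)$ and the restriction operator, which form a retraction--coretraction pair. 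No boundary conditions intervene, since none of the three spaces carries one. It therefore suffices to prove the abstract statement: if $X_0\hookrightarrow X_1$ is a dense continuous embedding of Hilbert spaces and $X_{1/2}:=[X_0,X_1]_{1/2}$, then
\[E_{2,2}=\left\{f\in L^2\((0,T),X_0\):\ \p_tf\in L^2\((0,T),X_1\)\right\}\hookrightarrow C^0\([0,T],X_{1/2}\)\]
continuously.

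For the abstract statement I would argue in three steps. \emph{Step 1 (extension in time).} Given $f\in E_{2,2}$, extend it to $\tilde f$ on all of $\Real$ by even reflection across $t=0$ and across $t=T$ onto a slightly larger interval, followed by multiplication by a smooth cutoff equal to $1$ on $[0,T]$ and compactly supported; then $\tilde f\in L^2(\Real,X_0)$, $\p_t\tilde f\in L^2(\Real,X_1)$, with
\[\norm{\tilde f}_{L^2(\Real,X_0)}+\norm{\p_t\tilde f}_{L^2(\Real,X_1)}\leq C\(\norm{f}_{L^2((0,T),X_0)}+\norm{\p_tf}_{L^2((0,T),X_1)}\).\]
\emph{Step 2 (uniform bound).} For each fixed $t_0\in[0,T]$, apply the Lions--Peetre trace theorem to the half-line function $s\mapsto\tilde f(t_0+s)$: membership in $L^2(\Real^+,X_0)$ with derivative in $L^2(\Real^+,X_1)$ forces the trace at $s=0$ to lie in $X_{1/2}$, the trace operator being bounded. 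This yields $\tilde f(t_0)\in X_{1/2}$ and, uniformly in $t_0$,
\[\norm{\tilde f(t_0)}_{X_{1/2}}\leq C\(\norm{\tilde f}_{L^2(\Real,X_0)}+\norm{\p_t\tilde f}_{L^2(\Real,X_1)}\)\leq C\norm{f}_{E_{2,2}}.\]
(Equivalently, since the couple is Hilbertian, one may write $X_1$ as the ambient space and $X_0=D(S)$ for a positive self-adjoint operator $S\geq1$ on $X_1$, and run a spectral-theorem energy computation scale by scale.) \emph{Step 3 (continuity).} Maps in $C^1([0,T],X_0)$ are dense in $E_{2,2}$ (mollify the extension from Step 1 in time) and lie trivially in $C^0([0,T],X_{1/2})$; applying the estimate of Step 2 to differences shows that a sequence of such maps converging to $f$ in $E_{2,2}$ is Cauchy in $C^0([0,T],X_{1/2})$, hence converges uniformly to some $g\in C^0([0,T],X_{1/2})$ with $g=f$ a.e. Thus $f$ admits a representative in $C^0([0,T],X_{1/2})$ and the embedding is continuous.

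I expect Step 2 to be the only substantive point: it is precisely the trace theorem for interpolation couples (equivalently Theorem 3.1 of Lions--Magenes, Vol. 1), and a fully self-contained treatment requires recalling either the trace characterization of $[X_0,X_1]_{1/2}$ or the spectral reduction above. Steps 1 and 3, together with the interpolation identity for the Sobolev spaces, are routine. Since the assertion is exactly Theorem II.5.14 of \cite{BF}, one may alternatively simply invoke that reference; the outline above records the mechanism behind it.
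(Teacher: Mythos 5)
Your proposal is correct: the paper offers no proof of this lemma, merely citing Theorem II.5.14 of \cite{BF}, and your argument — identifying $W^{k+1,2}(\Om)$ as the half-way interpolation space of the couple $\(W^{k+2,2}(\Om),W^{k,2}(\Om)\)$ via an extension operator, and then combining time-reflection, the Lions--Peetre trace theorem, and a density argument — is precisely the standard mechanism behind that cited result. Nothing further is needed.
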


\section{Sobolev-interpolation inequalities}\label{s: Sob-inerp-inq}

In this section, we recall the classical Sobolev-interpolation inequalities, see \cite{DW} for a proof.
\begin{thm}\label{G-N2}
Let $\Om$ be a smooth bounded domain in $\Real^m$. Let $q,r$ be real numbers with $1\leq q,r\leq \infty$ and $j,m$ integers with $0\leq j<n$. Then there exists a constant $C$ depending only on $q,r,j,n,m$ and the geometry of $\Om$ such that for any function $f\in W^{n, r}(\Om)\cap L^q(\Om)$, we have
\begin{equation}\label{G-N-ineq1}
\norm{\p^jf}_{L^p(\Om)}\leq C\norm{f}^a_{W^{n,r}(\Om)}\norm{f}^{1-a}_{L^q(\Om)},
\end{equation}
where
\[\frac{1}{p}=\frac{j}{m}+a(\frac{1}{r}-\frac{n}{m})+(1-a)\frac{1}{q},\]
for all $a\in [\frac{j}{n},1]$. When $r=\frac{m}{n-j}\neq 1$, the inequality \eqref{G-N-ineq1} is not valid for $a=1$.
\end{thm}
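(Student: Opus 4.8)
The plan is to deduce the inequality on the bounded domain $\Om$ from the corresponding statement on all of $\Real^m$, where the classical Gagliardo--Nirenberg argument applies. First I would fix a total extension operator $E$ of Stein type attached to the smooth bounded domain $\Om$; since $\p\Om$ is smooth, $E$ is bounded simultaneously as a map $W^{k,s}(\Om)\to W^{k,s}(\Real^m)$ for every integer $k\geq 0$ and every $1\leq s\leq\infty$, preserves smoothness, and satisfies $\norm{Eg}_{L^s(\Real^m)}\leq C\norm{g}_{L^s(\Om)}$. Using density we reduce to $f\in C^\infty(\bar\Om)$, and then, composing $E$ with multiplication by a fixed cut-off $\chi\in C_c^\infty(\Real^m)$ equal to $1$ on $\Om$ (admissible because multiplication by $\chi$ is bounded on each of the spaces above), we may replace $f$ by $\tilde f=\chi\,Ef\in C_c^\infty(\Real^m)$, which satisfies
$$\norm{\tilde f}_{W^{n,r}(\Real^m)}\leq C\norm{f}_{W^{n,r}(\Om)},\qquad \norm{\tilde f}_{L^q(\Real^m)}\leq C\norm{f}_{L^q(\Om)},$$
while $\norm{\p^j f}_{L^p(\Om)}\leq\norm{\p^j\tilde f}_{L^p(\Real^m)}$ since $\tilde f\equiv f$ on $\Om$. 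It therefore suffices to prove \eqref{G-N-ineq1} on $\Real^m$ for functions in $C_c^\infty(\Real^m)$.

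On $\Real^m$ I would argue in three steps. \textbf{(i) Exponents.} Testing the desired bound against the rescalings $f(\lambda\,\cdot\,)$ and letting $\lambda\to0$ and $\lambda\to\infty$ forces the relation $\tfrac1p=\tfrac jm+a(\tfrac1r-\tfrac nm)+(1-a)\tfrac1q$; conversely, because the inhomogeneous norm $\norm{\cdot}_{W^{n,r}}$ is used, that relation is also sufficient throughout the claimed range $a\in[j/n,1]$, the lower-order contributions in $\norm{\cdot}_{W^{n,r}}$ being harmlessly absorbed. \textbf{(ii) One-dimensional core.} For $g\in C_c^\infty(\Real)$ I would establish the prototype $\norm{g'}_{L^p(\Real)}\leq C\norm{g''}_{L^r(\Real)}^{1/2}\norm{g}_{L^q(\Real)}^{1/2}$, and more generally $\norm{g^{(j)}}_{L^p}\leq C\norm{g^{(n)}}_{L^r}^{a}\norm{g}_{L^q}^{1-a}$, by the Nirenberg device: cover $\Real$ by intervals $I$ of common length $h$, use averaging together with the fundamental theorem of calculus on each $I$ to bound $\norm{g}_{L^\infty(I)}$ by $h^{-1}\norm{g}_{L^1(I)}+\norm{g'}_{L^1(I)}$ and $\norm{g'}_{L^\infty(I)}$ by $h^{-1}\norm{g'}_{L^1(I)}+\norm{g''}_{L^1(I)}$, convert the $L^1(I)$ quantities into $L^p(I),L^r(I),L^q(I)$ ones by H\"older, sum over the cover, and finally optimize in $h$. \textbf{(iii) From $\Real$ to $\Real^m$.} Apply the one-dimensional estimates variable by variable; mixed derivatives are handled by an induction on $n$ that interpolates in one variable at a time with the remaining variables frozen and then integrates in those frozen variables via H\"older, together with the elementary comparison $\norm{\p^jf}_{L^p}\simeq\sum_i\norm{\p_i^jf}_{L^p}$ up to lower-order terms. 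Combining (i)--(iii) and re-optimizing yields \eqref{G-N-ineq1} on $\Real^m$, and the first paragraph transfers it back to $\Om$.

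I expect step (iii) to be the main obstacle: the careful one-variable-at-a-time induction promoting the one-dimensional prototype to arbitrary mixed derivatives on $\Real^m$ with the exact exponents is the delicate point, and it is precisely where the exclusion of the endpoint $a=1$ when $r=\tfrac{m}{n-j}\neq1$ must be respected, this being the borderline exponent of the embedding $W^{n,r}\hookrightarrow L^\infty$, which genuinely fails there. Everything else reduces to H\"older's inequality, the fundamental theorem of calculus, and a scaling optimization, and is routine once (iii) is set up correctly. Finally, for the Hilbert case $r=q=2$ one can bypass (ii)--(iii): by Plancherel the claim becomes $\int_{\Real^m}|\xi|^{2j}|\hat f|^2\,d\xi\leq\bigl(\int|\xi|^{2n}|\hat f|^2\bigr)^{j/n}\bigl(\int|\hat f|^2\bigr)^{1-j/n}$, immediate from H\"older with conjugate exponents $n/j$ and $n/(n-j)$, and general $p$ then follows by combining this with the Sobolev embedding; this special case already covers several of the uses of Theorem \ref{G-N2} in later sections.
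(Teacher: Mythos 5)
The paper does not actually prove Theorem \ref{G-N2}: it is recalled as the classical Gagliardo--Nirenberg interpolation inequality and the proof is delegated to the reference \cite{DW} (which in turn relies on the standard literature). Your proposal is therefore not competing with an argument in the paper but reconstructing the classical one, and it does so along the standard lines: reduce to $\Real^m$ by a Stein total extension operator plus a cut-off (legitimate here, and the resulting domain-dependence of $C$ is consistent with the statement), identify the exponent relation by scaling, prove the one-dimensional prototype by Nirenberg's averaging/FTC/H\"older/optimization device, and tensorize. This is sound as a roadmap. Two small cautions. First, the comparison $\norm{\p^jf}_{L^p}\simeq\sum_i\norm{\p_i^jf}_{L^p}$ modulo lower-order terms cannot be obtained from Fourier multipliers at the endpoints $p=1,\infty$; in the classical proof mixed derivatives are instead controlled by iterating the one-dimensional interpolation inequality itself (e.g.\ bounding $\norm{\p_1\p_2 f}$ by geometric means of pure second derivatives), which works for all $1\le p\le\infty$ and is exactly the ``delicate point'' you flag in step (iii). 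Second, the Plancherel shortcut only covers $r=q=2$ with $p=2$ directly; the passage to general $p$ via Sobolev embedding is fine for the applications in Sections \ref{s: local-exist-appro-problem}--\ref{s: proof-main-thm} (which use $m\le 3$, $r=q=2$), but it does not by itself reproduce the full range of the theorem. Neither point is a gap in substance, only places where the sketch would need the standard care when written out.
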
	
\begin{rem}
1. Inequality \eqref{G-N-ineq1} with $a=1$ gives the classical Sobolev inequality:
\begin{itemize}
\item[$(1)$] For $r<\frac{m}{n}$, we have
\begin{equation}\label{Soblov-inq}
\norm{f}_{L^p}\leq C\norm{f}_{W^{n,r}}
\end{equation}
for any $f\in W^{n,r}$, where the constant $p$ is given by
\[\frac{1}{p}=\frac{1}{r}-\frac{n}{m}.\]
\item[$(2)$] For $r>\frac{m}{n}$, we have
\begin{equation}\label{Soblov-inq1}
\norm{f}_{L^\infty}\leq C\norm{f}_{W^{n,r}}
\end{equation}
for any $f\in W^{n,r}$.
\end{itemize}	
2. Inequality \eqref{G-N-ineq1} with $a=\frac{j}{n}$ gives the following classical interpolation inequality:
\[\norm{\p^jf}_{L^p(\Om)}\leq C\norm{f}^{j/n}_{W^{n,r}(\Om)}\norm{f}^{(n-j)/n}_{L^q(\Om)}\]
for any $f\in W^{n,r}$, where
\[\frac{1}{p}=\frac{j}{n}\cdot\frac{1}{r}+(1-\frac{j}{n})\cdot\frac{1}{q}.\]
\end{rem}

For later arguments, we require the following direct corollary of Theorem \ref{G-N2} in the case where $m\leq 3$.
\begin{lem}\label{alg}
Let $\Om$ be a smooth bounded domain in $\Real^m$ with $m\leq 3$, $n_1\geq 0$, and $n_2\geq 2$. If $f\in W^{n_1,2}(\Om)$  and $g\in W^{n_2,2}(\Om)$, then $fg\in W^{l,2}(\Om)$ where $l=\min\{n_1,n_2\}$. Moreover, there exists a constant $C$ depending only on $n_1$ and $n_2$ such that we have
\[\norm{fg}_{W^{l,2}(\Om)}\leq C\norm{f}_{W^{n_1,2}} \norm{g}_{W^{n_2,2}}.\]	
Here we denote $W^{0,2}(\Om)=L^2(\Om)$ for the sake of convenience.
\end{lem}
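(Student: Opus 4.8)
\textbf{Proof proposal for Lemma \ref{alg}.}

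The plan is to reduce the statement to the Gagliardo--Nirenberg--Sobolev inequalities of Theorem \ref{G-N2}, exploiting crucially that $m\le 3$ and that $n_2\ge 2>m/2$, so that $W^{n_2,2}(\Om)\hookrightarrow L^\infty(\Om)$. Set $l=\min\{n_1,n_2\}$. By the Leibniz rule, for any multi-index of order $j\le l$ the derivative $\p^j(fg)$ is a finite sum of terms $\p^a f\,\p^b g$ with $a+b=j$, $0\le a\le j$, $0\le b\le j$. Thus it suffices to bound $\norm{\p^a f\,\p^b g}_{L^2(\Om)}$ by $C\norm{f}_{W^{n_1,2}}\norm{g}_{W^{n_2,2}}$ for each such pair $(a,b)$ with $a+b\le l$, and then sum over the finitely many terms.

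The main step is the case-by-case $L^2$-estimate of a single product $\p^a f\,\p^b g$. First, I would dispose of the two \emph{endpoint} cases where one factor carries no derivatives: if $a=0$, write $\norm{f\,\p^b g}_{L^2}\le \norm{f}_{L^\infty}\norm{\p^b g}_{L^2}$ when $n_1>m/2$, or more robustly $\norm{f\,\p^b g}_{L^2}\le\norm{f}_{L^{p}}\norm{\p^b g}_{L^{q}}$ with $\tfrac1p+\tfrac1q=\tfrac12$, choosing $p,q$ so that Theorem \ref{G-N2} gives $\norm{f}_{L^p}\le C\norm{f}_{W^{n_1,2}}$ and $\norm{\p^b g}_{L^q}\le C\norm{g}_{W^{n_2,2}}$ (here $b\le l\le n_2$, and since $b\le n_2-1$ unless $b=n_2$, in the latter subcase necessarily $a=0$ and one takes $q=2$, $p=\infty$, which is legitimate because $n_1\ge n_2\ge 2>m/2$); the case $b=0$ is symmetric using $n_2\ge 2>m/2$ to put $g\in L^\infty$. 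For the \emph{interior} case $a\ge 1$ and $b\ge 1$ with $a+b\le l$, apply H\"older with exponents $p,q$, $\tfrac1p+\tfrac1q=\tfrac12$, and use the interpolation form of \eqref{G-N-ineq1}:
\begin{equation}\label{alg-interp}
\norm{\p^a f}_{L^{p}(\Om)}\le C\norm{f}^{a/n_1}_{W^{n_1,2}(\Om)}\norm{f}^{1-a/n_1}_{L^2(\Om)},\qquad
\norm{\p^b g}_{L^{q}(\Om)}\le C\norm{g}^{b/n_2}_{W^{n_2,2}(\Om)}\norm{g}^{1-b/n_2}_{L^2(\Om)},
\end{equation}
with $\tfrac1p=\tfrac{a}{m}\cdot\!\big(\!\cdots\!\big)$ dictated by the exponent relation in Theorem \ref{G-N2}. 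The arithmetic to check is that one can indeed split $\tfrac12=\tfrac1p+\tfrac1q$ consistently with the two admissibility constraints; this is where $m\le 3$ enters, since for $a+b\le l\le n_2$ and $a\le n_1$ the required exponents $p,q$ stay in $[2,\infty]$, and the exponent identity
$\big(\tfrac1r-\tfrac{n}{m}\big)$-type terms combine to give exactly $\tfrac1p+\tfrac1q=\tfrac1a\!\cdot\!\text{(stuff)}=\tfrac12$ once one uses $a/n_1+b/n_2\le 1$ is \emph{not} needed — rather one verifies directly that the natural choice $\tfrac1p=\tfrac14+\tfrac{a-\,\cdot}{\cdot}$ works; concretely, for $m=3$ one checks the finitely many relevant $(a,b)$ with $a+b\le l$ by hand. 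Combining \eqref{alg-interp} with $\norm{f}_{L^2}\le\norm{f}_{W^{n_1,2}}$ and $\norm{g}_{L^2}\le\norm{g}_{W^{n_2,2}}$ yields $\norm{\p^a f\,\p^b g}_{L^2}\le C\norm{f}_{W^{n_1,2}}\norm{g}_{W^{n_2,2}}$.

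The only real obstacle is the bookkeeping in the interior case: one must confirm that for every admissible pair $(a,b)$ the Gagliardo--Nirenberg exponent $p=p(a)$ associated to $\p^a f\in W^{n_1,2}$ and the exponent $q=q(b)$ associated to $\p^b g\in W^{n_2,2}$ satisfy $\tfrac1p+\tfrac1q=\tfrac12$ (or $\le\tfrac12$, after which one interpolates down using boundedness of $\Om$). Because $m\le 3$ and $n_2\ge 2$, the worst cases are low-order and finite in number, so this reduces to a short explicit verification rather than an estimate uniform in large $n_1,n_2$; in fact the cleanest route is to prove the bound for $n_1=n_2=2$ and $m=3$ first (a handful of Sobolev embeddings $W^{2,2}\hookrightarrow L^\infty$, $W^{1,2}\hookrightarrow L^6$, $L^2$) and then observe the general case follows by the same Leibniz splitting since any $\p^a f$ with $a\le l\le n_1$ lies in $W^{n_1-a,2}\subset W^{l-a,2}$ and likewise for $g$. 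Finally, that $fg$ is genuinely in $W^{l,2}(\Om)$ (not merely that a formal norm is finite) follows by density: approximate $f,g$ by smooth maps in $W^{n_1,2}$ and $W^{n_2,2}$ respectively, apply the estimate to the smooth approximants and their differences to get a Cauchy sequence in $W^{l,2}(\Om)$, and identify the limit with $fg$.
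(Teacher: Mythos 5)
Your proposal is correct and follows exactly the route the paper intends: the paper gives no proof, presenting the lemma as a ``direct corollary'' of Theorem \ref{G-N2}, and your Leibniz-rule splitting $\p^j(fg)=\sum_{a+b=j}\p^af\,\p^bg$ combined with H\"older and the embeddings $W^{2,2}(\Om)\hookrightarrow L^\infty$, $W^{1,2}(\Om)\hookrightarrow L^6$ for $m\le 3$ is the standard way to fill it in. The only remark worth making is that the ``bookkeeping'' you worry about in the interior case is settled at once by the observation implicit in your last paragraph: since $a+b\le l=\min\{n_1,n_2\}$, one always has $(n_1-a)+(n_2-b)\ge\max\{n_1,n_2\}\ge n_2\ge 2$, so $\p^af\in W^{s,2}$ and $\p^bg\in W^{t,2}$ with $s+t\ge 2$, and each of the three configurations ($s\ge 2$, $t\ge 2$, or $s=t=1$) is handled by $L^\infty\cdot L^2$ or $L^6\cdot L^3$ — which renders the garbled exponent formulas in your middle paragraph unnecessary.
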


Let $\Om$ be a smooth bounded domain in $\Real^3$, and let $u: \Om\times [0,T]\to \mathbb{S}^2$ be a sufficiently regular map.  Then, as a consequence of Lemma \ref{alg}, we obtain the following results on the equivalence of Sobolev norms for the extrinsic and intrinsic time derivatives of $u$.
\begin{lem}\label{equiv-es-p-t-u}
For any $k\geq 1$ and $l=0$ or $l=1$, the following two properties are equivalent.
\begin{itemize}
\item[$(1)$] For any $0\leq i\leq k$, $u$ satisfies
\[\p^i_t u\in L^\infty([0,T],\, W^{2k+l-2i}(\Om)).\]
\item[$(2)$] For any $0\leq i\leq k$, $u$ satisfies
\[v_i\in L^\infty([0,T],\, W^{2k+l-2i}(\Om)).\]
\end{itemize}
\end{lem}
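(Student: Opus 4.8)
The plan is to prove the equivalence of the two properties in Lemma \ref{equiv-es-p-t-u} by induction on $k$, exploiting the algebraic relation between the extrinsic time derivatives $\p_t^i u$ and the intrinsic ones $v_i = \n_t^i u$. The starting observation is that for a map $u$ into $\U^2 \subset \Real^3$, the pull-back connection acts by $\n_t X = \p_t X + \langle \p_t u, X\rangle u$ (the second fundamental form of $\U^2$ being $A(u)(X,Y) = -\langle X,Y\rangle u$ up to sign), so that iterating gives a formula
\[
v_i = \n_t^i u = \p_t^i u + \sum (\text{products of lower-order } \p_t^j u),
\]
and symmetrically $\p_t^i u = v_i + \sum(\text{products of lower-order } v_j)$, where in each sum every term is a product of at least two factors, each factor being a time derivative of $u$ (respectively a $v_j$) of order strictly less than $i$, and the total order of differentiation in each term is $i$. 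I would first make this bookkeeping precise: by induction on $i$, write $\p_t^i u = v_i + P_i(u, v_1,\dots,v_{i-1})$ where $P_i$ is a universal polynomial expression, each monomial of which has the form $v_{j_1}\cdots v_{j_s}$ with $s\geq 2$ and $j_1+\cdots+j_s = i$, $j_a < i$ (and conversely with the roles of $\p_t^a u$ and $v_a$ exchanged). The case $l=0$ versus $l=1$ only affects which Sobolev exponents appear and is handled uniformly.

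Next I would run the induction. For $k=1$ (say $l=1$, the $l=0$ case being identical) we must show $u \in L^\infty([0,T],W^{3,2})$ and $\p_t u \in L^\infty([0,T],W^{1,2})$ is equivalent to the same with $v_1$ in place of $\p_t u$; but $v_1 = \p_t u + \langle \p_t u, u\rangle u$ and $\p_t u = v_1 - \langle v_1, u\rangle u$, so since $u \in W^{3,2}$ and $W^{3,2} \subset W^{2,2}$, Lemma \ref{alg} (with $n_1 = 1$, $n_2 = 3$) gives that multiplication by the $W^{3,2}$-quantities built from $u$ preserves $W^{1,2}$, establishing the equivalence. For the inductive step, assume the equivalence holds for all orders up to $k-1$, i.e. $(1)_{k-1} \Leftrightarrow (2)_{k-1}$, and more importantly that under either hypothesis we know $\p_t^i u, v_i \in L^\infty([0,T], W^{2(k-1)+l-2i,2})$ for $0 \le i \le k-1$; equivalently, with the indexing of $\T_k$, that $u,v_1,\dots,v_{k-1}$ and $u, \p_t u,\dots,\p_t^{k-1}u$ all have the regularities asserted in $(2)_k$ / $(1)_k$ for indices $i \le k-1$. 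The point is that the regularities in $(1)_k$ and $(2)_k$ at index $i \le k-1$ are strictly better (by two derivatives) than those at index $k-1$ in $(1)_{k-1}$, so one applies the induction not to deduce the lower-index regularity but simply uses it as given. Then at the top index $i=k$: assuming $(1)_k$, each factor $v_{j_a}$ ($j_a \le k-1$) in $P_k$ lies — by the equivalence at lower orders applied using the already-known regularities — in $W^{2k+l-2j_a,2}$, and since $j_a \le k-1$ we have $2k+l-2j_a \ge l+2 \ge 2$, so Lemma \ref{alg} applies iteratively to each monomial $v_{j_1}\cdots v_{j_s}$; because $\sum j_a = k$ and each factor sits in a Sobolev space of order $2k+l-2j_a$, the repeated application of Lemma \ref{alg} puts the product in $W^{m,2}$ with $m = \min_a (2k+l-2j_a) \ge l+2$, hence in particular in $W^{2k+l-2k,2} = W^{l,2}$ (one checks $l \le \min_a(2k+l-2j_a)$, which since $j_a \le k-1$ reads $l \le l+2$, trivially true; more carefully, with $s \ge 2$ factors the relevant bound is that $\min_a(2k+l-2j_a) \ge 2k+l-2(k-1) = l+2 > l$). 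Therefore $\p_t^k u = v_k + P_k \in L^\infty([0,T],W^{l,2})$ iff $v_k \in L^\infty([0,T],W^{l,2})$, which is exactly the $i=k$ clause, and symmetrically in the other direction.

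I would streamline the exponent counting with the following clean statement, proved once and reused: if $a_1,\dots,a_s$ are nonnegative integers with $s \ge 2$ and $\sum a_b \le A$, and $g_b \in W^{A - 2a_b,2}(\Om)$ with $A - 2a_b \ge 2$ for each $b$, then $g_1\cdots g_s \in W^{A-2A',2}$ where... — actually the cleanest route is: each $g_b \in W^{r_b,2}$ with $r_b \ge 2$, so by Lemma \ref{alg} applied $s-1$ times, $g_1\cdots g_s \in W^{\min_b r_b,2}$ with norm bounded by $C\prod_b \|g_b\|_{W^{r_b,2}}$; then observe $\min_b(2k+l-2a_b) \ge 2k+l-2\max_b a_b \ge 2k+l - 2(k - (s-1)\cdot 0 \dots)$, and since each $a_b \ge 0$ and they sum to $k$ with $s \ge 2$ terms, $\max_b a_b \le k$, but in fact the binding case $a_b = k-1$ for one index forces all others to be $\le 1$; at any rate $\max_b a_b \le k-1$ is false in general — rather $\max_b a_b \le k$ but then the other $\ge 1$ factors have $a = 0$, contributing $W^{2k+l,2}$, so the min is $2k+l - 2\max a_b$ and since $\max a_b \le k-1$ actually does hold whenever $s\ge 2$ and all $a_b \ge 1$... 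The genuinely careful point — and the one place requiring attention — is the degenerate monomials where some $a_b = 0$: a factor $v_0 = u$ lies in $W^{2k+l,2}$, which is fine, and a monomial like $u^{s-1} v_{k}$ does not occur since $a_b < i = k$ strictly. Modulo this check, the induction closes. The main obstacle is thus not any single hard estimate but the precise combinatorial verification that in the polynomials $P_i$ every monomial, after distributing Sobolev regularities via Lemma \ref{alg}, lands in $W^{2k+l-2k,2} = W^{l,2}$ at the top index — i.e. that no derivative loss occurs — together with making the structure of $P_i$ (each monomial has $\ge 2$ factors, all of strictly lower order, total order $i$) rigorous by the induction defining $\n_t^i u$ in terms of $\p_t$ and $u$.
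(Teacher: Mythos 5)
Your proposal is correct and is essentially the paper's argument: both rest on the universal expansion $v_i=\p_t^i u+\sum \p_t^{j_1}u\#\cdots\#\p_t^{j_s}u$ (and its inverse) with all $j_n\leq i-1$, combined with Lemma \ref{alg} and the observation that each lower-order factor lies in $W^{(2k+l-2i)+2,2}(\Om)$, so the products land in $W^{2k+l-2i,2}(\Om)$. The outer induction on $k$ you set up is superfluous — the paper simply runs the decomposition directly at each index $i\leq k$, since property $(1)$ (resp.\ $(2)$) already supplies the needed regularity of every factor appearing in the correction terms — but this is an organizational difference, not a mathematical one.
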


\begin{proof}
We first prove that (1) yields (2). For any $1\leq i\leq k$, a simple computation shows
\begin{align*}
v_i=\p^i_t u+\sum_{\substack{j_1+\cdots+j_s=i,\\ 0\leq j_n\leq i-1, s\leq i+1}} \p^{j_1}_tu\#\cdots\# \p^{j_s}_tu,
\end{align*}
where $\#$ denotes the linear contraction. By property $(1)$, each $\p^{j_n}_t u$ satisfies
\[\p^{j_n}_tu \in L^\infty([0,T],\, W^{(2k+l-2i)+2,2}(\Om)),\]
since $j_n \leq i-1$ implies $(2k + l - 2j_n) \geq (2k + l - 2i) + 2$.
By employing Lemma \ref{alg}, we deduce
\[v_i\in L^\infty([0,T], W^{2k+l-2i,2}(\Om)).\]
	
 In contrast, for any $1\leq i\leq k$, we can write
\begin{align*}
\p^i_t u=v_i+\sum_{\substack{j_1+\cdots+j_s=i,\\ 0\leq j_n\leq i-1, s\leq i+1}} v_{j_1}\#\cdots\# v_{j_s}.
\end{align*}
Almost the same argument as the above one derives that
	\[\p^i_t u\in L^\infty([0,T],\, W^{2k+l-2i,2}(\Om)),\]
by providing that $u$ satisfies property $(2)$.
	
Therefore, the equivalence between (1) and (2) is thus established.
\end{proof}

\section{Compatibility conditions}\label{s: com-cond}
To establish the existence of regular solutions for the initial-Neumann boundary value problem associated with the Schr\"odinger flow, the imposition of appropriate compatibility conditions on the initial data becomes indispensable. In this section, we provide a rigorous formulation of these necessary compatibility conditions and further develop their intrinsic geometric characterizations.

\subsection{Compatibility conditions of the initial data}\
Let $(N,J,h)$ be a K\"ahler manifold that isometrically embeds in $\Real^K$ with the second fundamental form $A(\cdot, \cdot)$. Let $\Om$ be a bounded smooth domain in $\Real^3$. Suppose $u$ is a smooth solution to the initial-Neumann boundary value problem of the Schr\"odinger flow on $\bar{\Om}\times [0,T]$ for some $T>0$:
\begin{equation}\label{eq-ASMF}
\begin{cases}
\p_t u = J(u) \tau(u),\quad\quad&\text{(x,t)}\in \Om\times [0,T],\\[1ex]
\frac{\p u}{\p \nu}=0, &\text{(x,t)}\in\p \Om\times [0,T],\\[1ex]
u(x,0)=u_0: \Om\to N\hookrightarrow\Real^{K}.
\end{cases}
\end{equation}

Given the smoothness of $u$ and the Neumann boundary condition $\frac{\p u}{\p \nu}|_{\p\Om\times[0,T]}=0$, it follows by differentiation that
\[\frac{\p \p^k_t u}{\p \nu}|_{\p \Om\times [0,T]}=0\]
for any  $k\in \mathbb{N}$. Consequently, at $t=0$, we obtain the boundary conditions for $u_0$:
\[\frac{\p V_k}{\p \nu}|_{\p \Om}=0,\]
where we denote
\[V_k=\p^k_tu|_{t=0}.\]

In the special case where $(N, J) = (\mathbb{S}^2, u \times)$, we derive explicit recursive formulas for the compatibility conditions $V_k(u_0)$ (see also \cite{CJ,CW1,CW3}). The 0-order condition is simply $V_0 = u_0$, while for each integer $k \geq 1$, the higher-order terms satisfy
\begin{align*}
	V_k = \sum_{\substack{i+j=k-1 \ i,j \geq 0}} C_{k-1}^i (V_i \times \Delta V_j),
\end{align*}
where $C_{k-1}^i$ are combination numbers.
\medskip

Therefore, to guarantee the existence of sufficiently regular (or smooth) solutions to the Schr\"odinger flow \eqref{eq-ASMF}, the initial data $u_0$ must satisfy certain compatibility conditions at the boundary $\p \Om$. We now provide their rigorous mathematical formulation.
\begin{defn}\label{def-comp}
Let $k\in \mathbb{N}$, $u_0\in W^{2k+2,2}(\Om,N)$. We say $u_0$ satisfies the compatibility condition of order $k$, if for any $j\in\{0,1,\dots,k\}$,
	\begin{equation}\label{com-cond}
		\frac{\p V_j}{\p \nu}|_{\p \Om}=0.
	\end{equation}
\end{defn}

An intrinsic formulation of these compatibility conditions can be given in terms of covariant derivatives with respect to $t$. Define
$$v_k=\n^k_t u,$$
and set
$$v_k(0)=v_k|_{t=0}\in \Ga(u^*_0(TN)).$$
In particular, $v_0(0)=u_0$ and $v_1(0)=J(u_0)\tau(u_0)$.

Then the compatibility conditions defined in \eqref{com-cond} admit the following equivalent characterization.
\begin{prop}\label{intrc-cd}
For $k\in \mathbb{N}$, $u_0\in W^{2k+2,2}(\Om,N)$, the following are equivalent
\begin{enumerate}
\item $u_0$ satisfies the $k$-th order compatibility conditions (Definition \ref{def-comp});

\item For any $j\in\{0,1,\dots,k\}$,
\begin{equation}\label{com-cond1}
\left.\n_\nu v_j(0)\right|_{\partial \Omega} = 0.
\end{equation}
\end{enumerate}
\end{prop}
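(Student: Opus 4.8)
The plan is to prove the equivalence of the two formulations of the compatibility conditions by showing, via induction on $k$, that the collection $\{V_0,\dots,V_k\}$ and the collection $\{v_0(0),\dots,v_k(0)\}$ generate the ``same'' boundary constraints. The crucial structural fact is that $v_k(0)$ differs from $V_k=\p^k_t u|_{t=0}$ only by a sum of terms of the shape $\p^{j_1}_t u\#\cdots\#\p^{j_s}_t u$ with each $j_n\le k-1$ (here $\#$ denotes a contraction built from the second fundamental form $A$ of $N$ and its derivatives, evaluated along $u$), and symmetrically $V_k$ differs from $v_k(0)$ by a sum of terms $v_{j_1}(0)\#\cdots\#v_{j_s}(0)$ with each $j_n\le k-1$. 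This is exactly the kind of identity used in the proof of Lemma \ref{equiv-es-p-t-u}, obtained by repeatedly applying $\n_t X = \p_t X + A(u)(\p_t u, X)$ (extended to higher tensors by Leibniz) and its inverse. The point is that all the ``correction'' terms involve only lower-order time derivatives.

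The proof then runs as follows. First I would record the two conversion identities above precisely, noting that the $\#$-contractions are pointwise algebraic operations (smooth functions of $u$ tensored with products of the listed derivatives), so they commute with restriction to $\p\Om$ and, more importantly, their normal derivative $\p_\nu(\cdots)$ is, by the Leibniz rule, a sum of terms each of which contains at least one factor of the form $\p_\nu \p^{j_n}_t u$ (resp. $\n_\nu v_{j_n}(0)$) plus terms involving only tangential derivatives of the factors — but in fact, since we only ever evaluate normal derivatives, the cleanest route is to argue by strong induction directly on the statement ``$\p_\nu V_j|_{\p\Om}=0$ for all $j\le k$'' $\iff$ ``$\n_\nu v_j(0)|_{\p\Om}=0$ for all $j\le k$''. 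For the base case $k=0$ this is just $v_0(0)=V_0=u_0$, so both read $\p_\nu u_0|_{\p\Om}=0$. For the inductive step, assume the equivalence holds for $k-1$; under that hypothesis all of $\p_\nu V_0,\dots,\p_\nu V_{k-1}$ and $\n_\nu v_0(0),\dots,\n_\nu v_{k-1}(0)$ vanish on $\p\Om$ simultaneously. Then, differentiating the identity $v_k(0)=V_k+\sum \p^{j_1}_t u\#\cdots$ in the normal direction and restricting to $\p\Om$: every correction term, being a contraction of factors each of which is either a (smooth tensor in) $u$ or some $\p^{j_n}_t u$ with $j_n\le k-1$, has the property that when we apply $\p_\nu$ by Leibniz we land on a factor's normal derivative; the dangerous factors $\p^{j_n}_t u$ contribute $\p_\nu\p^{j_n}_t u|_{\p\Om}=0$ by the inductive hypothesis, while differentiating the $u$-factor produces $\p_\nu u|_{\p\Om}=0$ as well. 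Hence $\n_\nu v_k(0)|_{\p\Om} = \p_\nu V_k|_{\p\Om}$ (the $A(u)(\p_t u,\cdot)$-type correction relating $\n_\nu$ to $\p_\nu$ also vanishes by the same token), and therefore one side vanishes iff the other does. The reverse substitution $V_k = v_k(0)+\sum v_{j_1}(0)\#\cdots$ is handled identically, which closes the induction.

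One subtlety I would be careful about is the regularity bookkeeping: the statement is about $u_0\in W^{2k+2,2}(\Om,N)$, not a smooth map, so the recursion for $V_j$ only makes sense because $V_j$ is built from at most $2j$ spatial derivatives of $u_0$ (as in the explicit formula $V_k=\sum_{i+j=k-1}C^i_{k-1}(V_i\times\Delta V_j)$ for the sphere case, and analogously via $A$ in the general Kähler case), so $V_j\in W^{2k+2-2j,2}$ and its trace and normal-derivative trace on $\p\Om$ are well defined for $j\le k$ by the trace theorem (one needs $2k+2-2j\ge 1$, which holds for $j\le k$). I would state this once at the start. The expected main obstacle is purely organizational rather than deep: making the ``$\#$ with at least one low-order factor'' argument airtight — i.e. verifying that \emph{every} monomial appearing when one expands $\n_t$ in terms of $\p_t$ and $A(u)$ (iterated) genuinely has all time-derivative orders strictly below $k$, and that differentiating any smooth function of $u$ in the normal direction only ever produces factors of $\p_\nu u$ (times tangential data). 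This is a bounded combinatorial check, and it is precisely the mechanism already invoked in the proof of Lemma \ref{equiv-es-p-t-u}; I would simply cite that pattern rather than re-derive the full expansion. No genuinely hard analysis is needed — the whole content is the algebraic observation that $\n_t$ and $\p_t$ differ by lower-order corrections, combined with the fact that $\p_\nu u_0|_{\p\Om}=0$ propagates.
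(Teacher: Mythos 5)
Your proposal is correct and follows essentially the same route as the paper: an induction on $k$ using the decomposition of $v_k(0)$ as $V_k$ plus multilinear corrections built from $u_0$ and the lower-order $V_{a_i}$ (the paper writes these as $B_{\si(k)}(u_0)(V_{a_1},\dots,V_{a_s})$, citing \cite{DW}), followed by the Leibniz-rule observation that every correction term's normal derivative vanishes via either $\p_\nu u_0|_{\p\Om}=0$ or the inductive hypothesis. Your additional remarks on trace regularity and on phrasing the induction as a simultaneous biconditional are sensible refinements but do not change the argument.
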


\begin{proof}
We establish the necessity by induction on $k$. Since $V_1=v_1(0)$, the assumption $\frac{\p V_1}{\p \nu}|_{\p\Om}=0$ implies
\[\n_\nu v_1(0)|_{\p\Om}=\frac{\p v_1(0)}{\p \nu}|_{\p\Om}+A(u_0)(\frac{\p u_0}{\p \nu}|_{\p\Om}, v_1(0))=0.\]
Next, we assume that the result holds for all $1\leq l\leq k-1$. For $l=k\geq 2$, we decompose $v_k(0)$ as
\begin{align*}
v_k(0)=V_k+\sum_{\si}B_{\si(k)}(u_0)(V_{a_1}, \cdots, V_{a_s})
\end{align*}
where the sum ranges over all partitions $\si(k)=(a_1, \cdots, a_s)$ of $k$ with  $1\leq a_i\leq k-1$ and $a_1+\cdots+a_s=k$, and each $B_{\si(k)}$ is a multi-linear vector-valued function in $\Real^K$. For details, we refer to page 1451 in \cite{DW}.
	
Applying the inductive hypothesis and boundary conditions, we compute
\begin{align*}
\n_\nu v_k(0)|_{\p\Om}=&\frac{\p v_k(0)}{\p \nu}|_{\p\Om}+A(u_0)(\frac{\p u_0}{\p \nu},v_k(0))|_{\p\Om}\\
=&\frac{\p V_k}{\p \nu}|_{\p\Om}+\sum_{\si}\p B_{\si(k)}(u_0)(\frac{\p u_0}{\p \nu}|_{\p \Om}, V_{a_1}, \cdots, V_{a_s})\\
&+\sum_{\si}B_{\si(k)}(u_0)\sum_{i}(V_{a_1}, \cdots,\frac{\p V_{a_i}}{\p \nu}|_{\p\Om},\cdots, V_{a_s})\\
=&0.
\end{align*}
This completes the induction.
	
The converse follows by reversing the argument, so we omit it.
\end{proof}

\subsection{Another compatibility conditions}\

We now establish the following lemma concerning compatibility conditions, which plays an essential role in eliminating boundary terms during the energy estimates in subsequent sections.
\begin{lem}\label{comp-cond}
Let $u: \Om\times [0,T]\to \Real$ be a function such that $\p^i_t u\in L^2([0,T], W^{4-2i,2}(\Om))$ for $i=0,1,2$. Assume that in the weak sense $\frac{\p u} {\p \nu}|_{\p\Om\times [0,T]}=0$, namely
\[\int_{0}^{T}\int_{\Om}\<\De f, \phi\>dxdt=-\int_{0}^{T}\int_{\Om}\<\p f, \p \phi\>dxdt\]
for any $\phi\in C^\infty(\Om\times [0,T])$, then
\[\frac{\p } {\p \nu}\p_t u|_{\p\Om\times [0,T]}=0.\]
\end{lem}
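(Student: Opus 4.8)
The plan is to reduce the statement to a distributional identity for the time-differentiated Laplacian and then bootstrap the weak Neumann condition through the time derivative. First, I would fix a test function $\psi\in C^\infty(\bar\Om)$ (independent of $t$) and a test function $\eta\in C^\infty_c((0,T))$, and set $\phi(x,t)=\psi(x)\eta'(t)$ in the given weak formulation of $\frac{\p u}{\p\nu}=0$. This gives
\[\int_0^T\eta'(t)\(\int_\Om\<\De u,\psi\>dx\)dt=-\int_0^T\eta'(t)\(\int_\Om\<\p u,\p\psi\>dx\)dt.\]
Since the regularity hypothesis $\p_t^i u\in L^2([0,T],W^{4-2i,2}(\Om))$ for $i=0,1,2$ guarantees (via Lemma \ref{C^0-em}) that $t\mapsto\int_\Om\<\De u,\psi\>dx$ and $t\mapsto\int_\Om\<\p u,\p\psi\>dx$ are absolutely continuous with $L^2$ time derivatives $\int_\Om\<\De\p_t u,\psi\>dx$ and $\int_\Om\<\p\p_t u,\p\psi\>dx$ respectively, I can integrate by parts in $t$ (the boundary terms vanish because $\eta$ has compact support) to obtain
\[\int_0^T\eta(t)\(\int_\Om\<\De\p_t u,\psi\>dx\)dt=-\int_0^T\eta(t)\(\int_\Om\<\p\p_t u,\p\psi\>dx\)dt.\]

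Second, since $\eta\in C^\infty_c((0,T))$ was arbitrary, the two integrands (as functions of $t$) agree for a.e.\ $t$:
\[\int_\Om\<\De\p_t u(\cdot,t),\psi\>dx=-\int_\Om\<\p\p_t u(\cdot,t),\p\psi\>dx\quad\text{for a.e. }t\in[0,T],\]
for every fixed $\psi\in C^\infty(\bar\Om)$. By the same density/measurability argument used to pass between countable collections of test functions and the full space $C^\infty(\bar\Om)$, this upgrades to: for a.e.\ $t$, the identity holds simultaneously for all $\psi\in C^\infty(\bar\Om)$. Since $\p_t u(\cdot,t)\in W^{2,2}(\Om)$ for a.e.\ $t$, the right-hand side equals $-\int_\Om\<\p\p_t u,\p\psi\>dx$, and integrating by parts on $W^{2,2}$ functions gives $\int_\Om\<\De\p_t u,\psi\>dx=-\int_\Om\<\p\p_t u,\p\psi\>dx+\int_{\p\Om}\frac{\p\p_t u}{\p\nu}\psi\,dS$. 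Comparing with the displayed identity forces
\[\int_{\p\Om}\frac{\p\p_t u}{\p\nu}\psi\,dS=0\quad\text{for all }\psi\in C^\infty(\bar\Om),\]
and since the traces of such $\psi$ are dense in $L^2(\p\Om)$, we conclude $\frac{\p\p_t u}{\p\nu}|_{\p\Om}=0$ for a.e.\ $t$, which is the claim.

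Third, I would tidy up the measure-zero issues. The exceptional $t$-sets accumulated above (where the integrated-by-parts identity fails, or where $\p_t u(\cdot,t)\notin W^{2,2}$) form a countable union of null sets once we restrict $\psi$ to a countable dense subset of $C^\infty(\bar\Om)$ in the $W^{2,2}$ topology; outside this single null set the trace identity holds for all $\psi$ by continuity of the trace operator $W^{2,2}(\Om)\to L^2(\p\Om)$. This delivers $\frac{\p}{\p\nu}\p_t u|_{\p\Om\times[0,T]}=0$ in the stated (weak, a.e.-in-time) sense.

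The main obstacle is the justification of differentiation under the integral sign in $t$ and the attendant absolute-continuity claim: one must verify that, under the hypothesis $\p_t^i u\in L^2([0,T],W^{4-2iota,2}(\Om))$, the scalar functions $t\mapsto\int_\Om\<\De u,\psi\>dx$ genuinely have the time derivative $\int_\Om\<\De\p_t u,\psi\>dx$ in the $W^{1,1}(0,T)$ sense. This follows from Lemma \ref{C^0-em} (or directly, since $\De u\in L^2([0,T],W^{2,2})$ and $\p_t\De u=\De\p_t u\in L^2([0,T],L^2)$, so the pairing with the fixed smooth $\psi$ is in $W^{1,1}(0,T)$), but it is the step that genuinely uses all three regularity assumptions rather than just $i=0,1$. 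Everything else is routine density and integration-by-parts bookkeeping.
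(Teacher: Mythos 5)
Your proof is correct and follows essentially the same route as the paper: both arguments transfer the weak Neumann condition from $u$ to $\p_t u$ by integrating by parts in time against separated test functions of the form $\psi(x)\eta(t)$ and identifying the resulting identity as the weak Neumann condition for $\p_t u$. The only difference is that you take $\eta$ compactly supported in $(0,T)$, which eliminates the temporal boundary terms from the outset, whereas the paper works with general $\phi\in C^\infty(\bar\Om\times[0,T])$ and cancels the $t=0$ and $t=T$ boundary contributions against each other using the continuity $u\in C^0([0,T],W^{3,2}(\Om))$ supplied by Lemma \ref{C^0-em}; this is a marginally cleaner bookkeeping choice, not a different argument.
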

\begin{proof}
For any $\phi\in C^\infty(\bar{\Om}\times [0,T])$, integration by parts yields
\begin{equation}\label{comp}
\int_{0}^{T}\int_{\Om}\<\De u, \p_t \phi\>dxdt=-\int_{0}^{T}\int_{\Om}\<\p u, \p_t\p \phi\>dxdt,
\end{equation}
which holds by virtue of the Neumann boundary condition
$$\frac{\p u}{\p \nu}|_{\p\Om\times[0,T]}=0.$$

We analyze both sides of \eqref{comp} separately. For the left-hand side, we have the following.
\begin{align*}
\mbox{LHS of \eqref{comp}} =&-\int_{0}^{T}\int_{\Om}\<\p_t\De u, \phi\>dxdt+\int_{\Om}\<\De u, \phi\>dx(T)\\
&-\int_{\Om}\<\De u, \phi\>dx(0)\\
=&-\int_{0}^{T}\int_{\Om}\<\p_t\De u, \phi\>dxdt-\int_{\Om}\<\p u, \p\phi\>dx(T)\\
&+\int_{\Om}\<\p u, \p\phi\>dx(0).
\end{align*}
For the right-hand side, we have
\begin{align*}
\mbox{RHS of \eqref{comp}} =&-\int_{0}^{T}\int_{\Om}\<\p u, \p_t\p \phi\>dxdt\\
=&\int_{0}^{T}\int_{\Om}\<\p \p_tu, \p \phi\>dxdt-\int_{\Om}\<\p u, \p\phi\>dx(T)\\
&+\int_{\Om}\<\p u, \p\phi\>dx(0).
\end{align*}

Equating both sides and canceling boundary terms gives
\[\int_{0}^{T}\int_{\Om}\<\De \p_tu, \phi\>dxdt=-\int_{0}^{T}\int_{\Om}\<\p \p_tu, \p\phi\>dxdt,\]
which establishes the weak Neumann condition for $\p_t u$.  Here we have used Lemma \ref{C^0-em} to give
\[u\in C^0([0,T],W^{3,2}(\Om)).\]
Thus, if we take $\phi(x,t)=\eta(t)f(x)$, then
\begin{align*}
\int_{0}^{T}\(\int_{\Om}\<\De \p_t u, f\>dx+\int_{\Om}\<\p \p_t u, \p f\>dx\)\eta(t)dt=0.
\end{align*}
This implies
\[\int_{\Om}\<\De \p_t u, f\>dx=-\int_{\Om}\<\p \p_t u, \p f\>dx\]
for any $t\in [0,T]$.
\end{proof}
\begin{rem}
The test functions in Lemma \ref{comp-cond} can be taken from  $L^2([0,T], W^{1,2}(\Om))$ due to the density of $C^\infty(\Om\times[0, T])$ in this space.
\end{rem}

\section{Higher order evolution equations of the LL equation}\label{s: high-eq-LL}

In this section, we derive higher order evolution equations associated with the Landau-Lifshitz (LL)  equation, and then establish the necessary preliminary estimates for the proof of our main Theorem \ref{main-thm}. The complete proof will be presented in Sections \ref{s: local-exist-appro-problem} and \ref{s: proof-main-thm}.

\subsection{Evolution equations for higher time derivatives $v_k = \nabla^k_t u$}\

Let $J(u)=u\times: T_u\U^2\to T_u\U^2$ denote the standard complex structure on $\U^2$. Recall that LL equation admits the following intrinsic formulation
\[\p_t u=J(u)\n_j\n_ju.\]

Taking advantage of the constant curvature property $\nabla R^{\mathbb{S}^2} \equiv 0$ of the sphere, we compute the $k$-th order time derivative evolution:
\begin{align*}
\n_t \n^k_t u=&J(u)(\n^k_t \n_j\n_j u)\\
=&J(u)(\n_j\n_j\n^k_t u+R^{\mathbb{S}^2}(\n^k_t u, \n_ju)\n_ju+R_k)
\end{align*}
where the term $R_k$ (vanishing when $k=1$) is a section of $u^*T\mathbb{S}^2$ given by
\begin{align*}
R_k=&\sum_{\substack{a_1+\cdots, a_s+i+j=k,\\ 0\leq a_l,i,j\leq k-1}}v_{a_1}\#\cdots\#v_{a_s}\# \n v_i\#\n v_j\\
&+\sum_{\substack{b_1+b_2+b_3=k+1,\\ 1\leq b_l\leq k-1}}v_{b_1}\#J(u)v_{b_2}\#v_{b_3},
\end{align*}
with $\#$ representing the linear contraction. This establishes the intrinsic evolution equation for $v_k = \n^k_t u$:
\begin{equation}\label{eq-v_k}
\n_t \om=u\times(-\n^*\n \om+R^{\mathbb{S}^2}(\om, \n_ju)\n_ju+R_k),
\end{equation}
where $\n^*$ denotes the dual operator of $\n$, and hence $-\n^*\n \om=\n_i\n_i \om$. The corresponding parabolic regularization takes the form:
\begin{equation}\label{eq-para-v_k}
\n_t \om_\ep=(\ep I+u\times)(-\n^*\n \om_\ep+R^{\mathbb{S}^2}(\om_\ep, \n_ju)\n_ju+R_k).
\end{equation}

For the purpose of PDE analysis, we derive the extrinsic formulations of equations \eqref{eq-v_k} and \eqref{eq-para-v_k}.  By viewing $\om_\ep\in u^*(T\mathbb{S}^2)$ as a vector-valued function into $\Real^3$, we obtain the following extrinsic formula:
\begin{equation}\label{formula-laplace-and-curvature}
\begin{aligned}
	-\n^*\n \om_\ep=&\De \om_\ep+\p_i(\<\p_i u,\om_\ep\>u)+\<\p u,\p \om_\ep\> u\\
	=&\De \om_\ep+2\<\p u,\p \om_\ep\>u+\<\De u, \om_\ep\>u+\<\p_j u,\om_\ep\>\p_j u,\\
	R^{\mathbb{S}^2}(\om_\ep, \n_ju)\n_ju=&|\p u|^2 \om_\ep-\<\om_\ep,\p_j u\>\p_j u.
\end{aligned}
\end{equation}

Therefore, the extrinsic formulation of \eqref{eq-para-v_k} becomes
\begin{equation}\label{ex-eq-para-vk}
\begin{aligned}
\p_t \om_\ep+\<\om_\ep, v_1\>u=&\ep(\De \om_\ep+2\<\p u, \p \om_\ep\>u+\<\De u, \om_\ep\>u+|\p u|^2\om_\ep+R_k)\\
&+u\times (\De \om_\ep+|\p u|^2\om_\ep+R_k),
\end{aligned}
\end{equation}
where $R_k$ expressed extrinsically as
\begin{align*}
R_k=&\sum_{\substack{a_1+\cdots, a_s+i+j=k,\\ 0\leq a_l,i,j\leq k-1}}v_{a_1}\#\cdots\#v_{a_s}\# \p v_i\#\p v_j\\
&+\sum_{\substack{b_1+b_2+b_3=k+1,\\ 1\leq b_l\leq k-1}}v_{b_1}\#(u\times v_{b_2})\#v_{b_3}.
\end{align*}

\subsection{Proof Strategy for the main theorem}\

Next, incorporating higher-order compatibility conditions \eqref{com-cond-1} for the initial data $u_0$, we employ an inductive argument to establish the main theorem \ref{main-thm}. Specifically, for each $k\geq 1$, we prove the following property $\T_k$ recursively:
\begin{itemize}
\item Assume that $u_0\in W^{2k+3,2}(\Om)$ satisfies the $k$-order compatibility conditions \eqref{com-cond-1}, then
\begin{align}
\p^i_tu\in L^\infty([0,T_0],W^{2k+3-2i,2}(\Om)),\label{es-case-k}
\end{align}
for $0\leq i\leq k+1$.
\end{itemize}
Recall that property $\T_1$ was established in our previous work \cite{CW4}. Assuming now that $\T_k$ holds, we proceed to prove $\T_{k+1}$.

To proceed, under the $(k+1)$-th order compatibility conditions, we consider the initial-Neumann boundary value problem:
\begin{equation}\label{Nuemann-eq-v_k}
\begin{cases}
\n_t \om=u\times(-\n^*\n \om+R^{\U^2}(\om, \n_j u)\n_ju+R_k),\\[1ex]
\frac{\p \om}{\p \nu}|_{\p\Om}=0,\\[1ex]
\om=v_k(0): \Om\to \Real^K.
\end{cases}
\end{equation}
Note that $v_k \in L^\infty([0,T_0], W^{3,2})$ is a strong solution to this problem.

To enhance the regularity of $v_k$, we introduce a regularization scheme by considering the following parabolic approximation.
\begin{equation}\label{para-Nuemann-eq-v_k0}
\begin{cases}
\n_t \om_\ep=(\ep I+u\times)(-\n^*\n \om_\ep+R^{\U^2}(\om_\ep, \n_j u)\n_ju+R_k),\\[1ex]
\frac{\p \om_\ep}{\p \nu}|_{\p\Om}=0,\\[1ex]
\om=v_k(0): \Om\to \Real^K.
\end{cases}
\end{equation}

Since $u_0$ satisfies the $(k+1)$-th order compatibility conditions \eqref{com-cond-1}, we have
\begin{align}
\n_\nu v_k(0)|_{\p \Om}=0,\quad \n_\nu v_{k+1}(0)|_{\p \Om}=0.\label{1-order-com-cond-v_k}
\end{align}
Crucially, because the complex structure $J(u)=u\times$ is integrable (i.e. $\n J(u)=0$) and the orthogonality of the terms on the right-hand side of \eqref{eq-para-v_k}, the compatibility conditions for the initial data $v_k(0)$ in both problems \eqref{Nuemann-eq-v_k} and \eqref{para-Nuemann-eq-v_k0} coincide and reduce to \eqref{1-order-com-cond-v_k} (also see Lemma \ref{uniform-cc}). This ensures the validity of the parabolic approximation scheme.

The proof of $\T_{k+1}$ proceeds via the following two steps:
\begin{itemize}
	\item[$(1)$] We show the existence of $W^{5,2}$ regular solutions $\om_\ep$ to the approximate problem \eqref{para-Nuemann-eq-v_k} via Galerkin methods, and
	derive key a priori estimates. Details are provided in Section \ref{s: local-exist-appro-problem}.
	\item[$(2)$] We establish uniform $W^{5,2}$-estimates for $\om_\ep$ with respect to $\ep\in (0,1)$, and obtain $v_k\in L^\infty([0,T_0], W^{5,2}(\Om))$ by passing to the limit for the sequence of approximate solutions $\{\om_\ep\}$ as $\ep\to 0$, which completing the proof of the property $\T_{k+1}$. See Section \ref{s: proof-main-thm} for details.
\end{itemize}

\subsection{Estimates for nonhomogeneous term $R_k$}\

For later application, we estimate the nonhomogeneous term $R_k$ in equation \eqref{ex-eq-para-vk}, provided that $u$ satisfies property $\T_k$.

\begin{lem}\label{es-R}
Assume that the solution $u$ satisfies the property $\T_k$ with $k\geq 1$, then for $0\leq i\leq 2$, we have
\begin{align}
\p_t^iR_k\in L^\infty ([0,T_0], W^{4-2i,2}(\Om)).\label{es-R1}
\end{align}

\end{lem}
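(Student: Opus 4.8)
The plan is to bound each term in the extrinsic formula for $R_k$ using the hypothesis $\T_k$ together with the algebra-type multiplication Lemma \ref{alg}, and then to do the same for the time derivatives $\p_t R_k$ and $\p_t^2 R_k$ after commuting $\p_t$ into the sum. First I would record, via Lemma \ref{equiv-es-p-t-u}, that the hypothesis $\T_k$ — namely $\p_t^i u\in L^\infty([0,T_0],W^{2k+3-2i,2}(\Om))$ for $0\leq i\leq k+1$ — is equivalent to $v_j\in L^\infty([0,T_0],W^{2k+3-2j,2}(\Om))$ for $0\leq j\leq k+1$; this is the form in which the data enters $R_k$. Then I would also need spatial-derivative control: since $\p v_j$ loses one spatial derivative, $\p v_j\in L^\infty([0,T_0],W^{2k+2-2j,2}(\Om))$.

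The main step is the term-by-term count. Take a typical summand of the first sum,
\[
v_{a_1}\#\cdots\#v_{a_s}\#\p v_i\#\p v_j,\qquad a_1+\cdots+a_s+i+j=k,\ \ 0\le a_l,i,j\le k-1.
\]
Each factor $v_{a_l}$ lies in $W^{2k+3-2a_l,2}\supseteq W^{2,2}$ (since $a_l\le k-1$ forces the exponent $\ge 5$), and the two ``rough'' factors $\p v_i,\p v_j$ lie in $W^{2k+2-2i,2}$ and $W^{2k+2-2j,2}$ respectively, again each $\supseteq W^{2,2}$. Applying Lemma \ref{alg} repeatedly (in dimension $m\le 3$, products of $W^{\ge 2,2}$ functions behave well), the product lies in $W^{l,2}$ with $l$ equal to the minimum of all the exponents; one then checks that the total ``derivative budget'' forces
\[
l \;\ge\; (2k+2)-2i-2j-\textstyle\sum_l 2a_l \;=\; 2k+2-2k \;=\;\dots
\]
more carefully, the sharp bookkeeping gives $l\ge 2k+2-2(a_1+\cdots+a_s+i+j)+2=2k+2-2k+2$? — I would instead argue directly: writing $n=\sum a_l + i + j=k$, one shows the product is controlled in $W^{2k+3-2n-2,2}=W^{2k+3-2k-2,2}=W^{1,2}$ — but since we only need $W^{4,2}$ and $4=2k+3-2(k-\tfrac12)$ is not integral I will instead note $n=k$ gives exponent $2k+3-2k-?$. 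The clean way: set $N=2k+3$; each $v_a$ costs $2a$ derivatives below $N$, each $\p v_a$ costs $2a+1$; total cost is $2n+2=2k+2$, so the product sits in $W^{N-(2k+2),2}=W^{1,2}$, which is \emph{not} enough. Hence the correct claim must be that these terms in fact land in $W^{4,2}$ because the summation constraint is $a_1+\cdots+a_s+i+j=k$ with the \emph{two} derivative factors already accounted — i.e. one verifies $N-2n-2 = 2k+3-2k-2 = 1$ is wrong and the statement instead follows from a more generous count using that \emph{at most two} factors are differentiated and the rest are undifferentiated $v$'s of low index; I would carry this out carefully following the scheme in \cite{DW}. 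The second sum $v_{b_1}\#(u\times v_{b_2})\#v_{b_3}$ with $b_1+b_2+b_3=k+1$, $1\le b_l\le k-1$ is handled identically, noting $u\times v_{b_2}$ has the same regularity as $v_{b_2}$ and that each factor again lies in $W^{\ge 2,2}$; the budget here is $2(k+1)$ below $N=2k+3$, giving $W^{1,2}$, so once more the precise combinatorial structure (not a crude sum of exponents) must be used.

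For the time derivatives, I would apply $\p_t$ (resp. $\p_t^2$) to the sum defining $R_k$; by the Leibniz rule each differentiation replaces one factor $v_a$ or $\p v_a$ by $\p_t v_a$ or $\p_t\p v_a$, which by $\T_k$ (equivalently by Lemma \ref{equiv-es-p-t-u}) lies in a Sobolev space exactly two orders lower, matching the claimed loss of $2i$ derivatives in \eqref{es-R1}. One must only check that after differentiating, every factor still lies in some $W^{\ge 2,2}$ so that Lemma \ref{alg} applies — this holds because the indices $a_l,b_l,i,j$ are all $\le k-1\le k$, keeping the exponents comfortably above $2$. The whole argument is a routine but slightly delicate accounting exercise; the only genuine obstacle is getting the combinatorial derivative-count right so that the output space is $W^{4-2i,2}$ rather than something weaker, and for that I would lean on the structure of $R_k$ exactly as organized in the multilinear expansion on page 1451 of \cite{DW}.
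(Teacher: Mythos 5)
There is a genuine gap: you never actually complete the derivative count, and the bookkeeping scheme you attempt is the wrong one. You try to bound the product by subtracting the \emph{total} ``cost'' of all factors from $N=2k+3$, arriving at $W^{1,2}$, and you correctly observe that this is not enough — but you then leave the resolution to ``the precise combinatorial structure'' without supplying it. The point you are missing is that Lemma \ref{alg} controls a product by the \emph{minimum} of the factors' Sobolev exponents (provided the other factor is in $W^{\geq 2,2}$), not by the sum of their deficits. Since every index in $R_k$ satisfies $a_l,i,j\leq k-1$, property $\T_k$ together with Lemma \ref{equiv-es-p-t-u} gives $v_a\in L^\infty([0,T_0],W^{2k+3-2a,2})\hookrightarrow L^\infty([0,T_0],W^{5,2})$ for every $a\leq k-1$, hence $\p v_i,\p v_j\in L^\infty([0,T_0],W^{4,2})$; every factor is therefore at worst in $W^{4,2}$, and iterating Lemma \ref{alg} puts the whole product in $W^{4,2}$. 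That one observation is the entire proof of the case $i=0$, and it is exactly what the paper does.

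Your treatment of the time derivatives has the same defect plus a small error: after applying $\p_t$ the indices range up to $k$ (resp.\ $k+1$ for $\p_t^2$), not ``$\leq k-1$'' as you assert, so the worst factors become $\p v_k\in L^\infty([0,T_0],W^{2,2})$ and $\p v_{k+1}\in L^\infty([0,T_0],L^2)$. The conclusion still follows from Lemma \ref{alg} because the constraint $\sum a_l+i+j=k+1$ (resp.\ $k+2$) forces at most one factor to be this rough while all others remain in $W^{\geq 2,2}$, so the minimum exponent is $2$ (resp.\ $0$), giving $\p_t^iR_k\in W^{4-2i,2}$. As written, however, your argument does not establish the lemma; you need to replace the ``derivative budget'' heuristic by the minimum-exponent application of Lemma \ref{alg} throughout.
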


\begin{proof}
For simplicity, we decompose $R_k$ as $R_k=F^1_k+F^2_k$, where
\begin{align*}
F^1_k=&\sum_{\substack{a_1+\cdots, a_s+i+j=k,\\ 0\leq a_l,i,j\leq k-1}}v_{a_1}\#\cdots\#v_{a_s}\# \p v_i\#\p v_j,\\
F^2_k=&\sum_{\substack{b_1+b_2+b_3=k+1,\\ 1\leq b_l\leq k-1}}v_{b_1}\#(u\times v_{b_2})\#v_{b_3}.
\end{align*}
We provide only a detailed analysis for the estimates of $F_k^1$, as those of $F^2_k$ follow analogously. Without loss of generality, we assume that $k\geq 2$. We derive from the property $\T_k$ (i.e., \eqref{es-case-k}) and Lemma \ref{equiv-es-p-t-u} that
	\[v_j\in L^\infty([0,T_0], W^{5,2}(\Om))\]
for $0\leq j\leq k-1$, since $2k+3-2j\geq 5$. So, we can apply Lemma \ref{alg} to show
$$F_k^1\in L^\infty ([0,T_0], W^{4,2}(\Om)).$$
	
Now we consider the estimates of $\p_t F_k^1$ and $\p^2_t F_k^1$. The time derivatives of $F^1_k$ can be decomposed as
\begin{align*}
\p_t F_k^1=&\sum_{\substack{a_1+\cdots, a_s+i+j=k+1,\\ 0\leq a_l,i,j\leq k}} v_{a_1}\#\cdots\# v_{a_s}\# \p v_{i}\# \p v_{j},\\
\p^2_t F_k^1=&\sum_{\substack{a_1+\cdots, a_s+i+j=k+2,\\ 0\leq a_l,i,j\leq k+1}} v_{a_1}\#\cdots\# v_{a_s}\# \p v_{i}\# \p v_{j}.
\end{align*}
From property $\T_k$ (i.e., \eqref{es-case-k}), we have
	\[\p v_k \in L^\infty([0,T_0], W^{2,2}(\Om)) \quad\mbox{and}\quad \p v_{k+1} \in L^\infty([0,T_0], L^2(\Om)).\]
Then it follows from Lemma \ref{alg} that
	\[\p_t^iF_k^1\in L^\infty ([0,T_0], W^{4-2i,2}(\Om))\]
for $i=1,2$.
\end{proof}

\section{Local existence of the approximate problem}\label{s: local-exist-appro-problem}

In this section, we establish the existence of locally regular solutions of the approximate problem
\begin{equation}\label{para-Nuemann-eq-v_k}
\begin{cases}
\n_t \om_\ep=(\ep I+u\times)(-\n^*\n \om_\ep+R^{\mathbb{S}^2}(\om_\ep, \n_ju)\n_ju+R_k),\\[1ex]
\frac{\p \om_\ep}{\p \nu}|_{\p\Om}=0,\\[1ex]
\om_\ep(0)=v_k(0): \Om\to u^*(T\U^2),
\end{cases}
\end{equation}
and then obtain some a priori estimates for the approximate solutions $\om_\ep$.

Assume that $u_0\in W^{2(k+1)+3,2}(\Om)$ satisfies the $(k+1)$-order compatibility conditions given in \eqref{com-cond-1}, and that the property $\T_k$ holds for $k\geq 1$, namely the solution $u$ satisfies estimates \eqref{es-case-k}. Then we can derive from Lemma \ref{equiv-es-p-t-u} that
\begin{align}
v_i\in L^\infty([0,T_0],W^{2k+3-2i,2}(\Om)),\label{es-T-k}
\end{align}
for $0\leq i\leq k+1$. Applying Lemma \ref{C^0-em}, we further deduce that
\[v_i\in C^0([0,T_0],W^{2k+2-2i,2}(\Om))\]
for $0\leq i\leq k$.

On the other hand, from equation \eqref{eq-v_k}, we express $v_{k+1}$ as
\[v_{k+1}=u\times (\De v_k+|\p u|^2v_k+R_k),\]
which implies that $v_{k+1}\in C^0([0,T_0], L^2(\Om))$. Combining these results, we conclude that for all $0\leq i\leq k+1$,
\begin{align}
	v_i\in C^0([0,T_0],W^{2k+2-2i,2}(\Om))\label{es-T-k1}.
\end{align}

\subsection{Local existence of approximation problem}\

For the purpose of PDE analysis, we consider the extrinsic formulation of the above problem \eqref{para-Nuemann-eq-v_k}:
\begin{equation}\label{para-Nuemann-eq-v_1}
\begin{cases}
\p_t \om_\ep+\<\om_\ep, v_1\>u=\ep(\De \om_\ep+2\<\p u, \p \om_\ep\>u+\<\De u, \om_\ep\>u+|\p u|^2\om_\ep)\\
\quad\quad\quad\quad\quad\quad\quad\quad+u\times (\De \om_\ep+|\p u|^2\om_\ep)+(\ep I+u\times)R_k,\\[1ex]
\frac{\p \om_\ep}{\p \nu}|_{\p\Om}=0,\\[1ex]
\om_\ep(0)=v_k(0): \Om\to \Real^K.
\end{cases}
\end{equation}
Here, $\p$ denotes the derivative with respect to the coordinate $x$.

Our main result in this subsection is as follows.
\begin{thm}\label{W^{4,2}-solu-om-ep}
Assume that $u_0\in W^{2(k+1)+3,2}(\Om)$ satisfies the $(k+1)$-order compatibility conditions \eqref{com-cond-1}. Let $u$ be a regular solution to \eqref{eq-LL}, which satisfies the property $\T_k$ (i.e. estimates \eqref{es-case-k}). Then the problem \eqref{para-Nuemann-eq-v_1} admits a unique solution $\om_\ep$ on $[0,T_0]$ such that
\begin{align}
\p^i_t\om_\ep \in C^0([0,T_0], W^{4-2i,2}(\Om))\cap L^2([0,T_0], W^{5-2i,2}(\Om))\label{W^{4,2}-es-om-ep}
\end{align}
for $i=0,1,2$.
\end{thm}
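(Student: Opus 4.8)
The plan is to construct the solution $\om_\ep$ by a Galerkin scheme based on the eigenfunctions of the Laplacian with Neumann boundary conditions, derive $\ep$-dependent energy estimates at the levels of $\om_\ep$, $\p_t\om_\ep$ and $\p_t^2\om_\ep$, and then pass to the limit in the Galerkin parameter. Fix $\ep\in(0,1)$. Let $\{e_\ell\}_{\ell\geq 1}\subset C^\infty(\bar\Om)$ be an $L^2$-orthonormal basis consisting of eigenfunctions of $-\De$ with $\frac{\p e_\ell}{\p\nu}|_{\p\Om}=0$, and seek $\om_\ep^{(n)}(x,t)=\sum_{\ell=1}^n g_\ell(t)\,e_\ell(x)$ (vector-valued coefficients in $\Real^3$) solving the projection of \eqref{para-Nuemann-eq-v_1} onto $\mathrm{span}\{e_1,\dots,e_n\}$. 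Because the right-hand side of \eqref{para-Nuemann-eq-v_1} is affine in $\om_\ep$ and its first two spatial derivatives, with coefficients built from $u$ and $v_1$ (which by \eqref{es-T-k}, \eqref{es-T-k1} and Lemma \ref{C^0-em} are continuous in time with values in high Sobolev spaces) and with the inhomogeneity $(\ep I+u\times)R_k$ controlled by Lemma \ref{es-R}, the Galerkin system is a linear ODE system with continuous (indeed $L^\infty$-in-$t$) coefficients, hence has a unique solution $g(t)$ on $[0,T_0]$ once the initial data $\om_\ep^{(n)}(0)$ is taken to be the $L^2$-projection of $v_k(0)$; no blow-up occurs since all estimates below are linear-Gronwall in nature.

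Next I would establish the a priori estimates, uniform in $n$ but allowed to depend on $\ep$. \emph{Level $0$:} test with $\om_\ep^{(n)}$; the leading dissipative term gives $\ep\norm{\p\om_\ep^{(n)}}_{L^2}^2$ after integration by parts (the boundary term vanishes by the Neumann condition), the gyroscopic term $u\times(\De\om_\ep^{(n)})$ contributes harmlessly after moving one derivative and using $\<X\times Y,Y\>=0$ type identities, and the remaining terms are controlled by $\norm{\om_\ep^{(n)}}_{W^{1,2}}^2$ times time-dependent constants plus the $R_k$ term; Gronwall (Lemma \ref{Gron-inq}) yields $\om_\ep^{(n)}\in L^\infty([0,T_0],L^2)\cap L^2([0,T_0],W^{1,2})$. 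To reach $W^{5-2i,2}$ regularity I would differentiate the Galerkin equation once and twice in $t$ — the boundary conditions $\frac{\p}{\p\nu}\p_t^i\om_\ep|_{\p\Om}=0$ for $i=0,1,2$ are legitimate here by Lemma \ref{comp-cond} (applied to the equation, since $\p_t^i\om_\ep$ solves an equation of the same structural type) and by the compatibility $\n_\nu v_k(0)|_{\p\Om}=0$, $\n_\nu v_{k+1}(0)|_{\p\Om}=0$ from Lemma \ref{uniform-cc} — and test the $i$-times differentiated equation with $\p_t^i\om_\ep^{(n)}$ and then $\De\p_t^i\om_\ep^{(n)}$. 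Each such test produces a coercive $\ep$-weighted top-order term, a harmless skew gyroscopic term, lower-order terms absorbed via the Sobolev-interpolation inequalities of Section \ref{s: Sob-inerp-inq} (in particular Lemma \ref{alg}), and inhomogeneities controlled by $\p_t^iR_k\in L^\infty([0,T_0],W^{4-2i,2})$ from Lemma \ref{es-R}; Lemma \ref{Gron-inq} then closes the estimate. Using the elliptic regularity of Lemma \ref{eq-norm} (equivalence of $\norm{\cdot}_{W^{k+2,2}}$ with $\norm{\cdot}_{L^2}+\norm{\De\cdot}_{W^{k,2}}$ under Neumann conditions) converts these differential inequalities into the bounds \eqref{W^{4,2}-es-om-ep}.

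I would then pass to the limit $n\to\infty$: the uniform bounds give weak-$*$ convergence of $\p_t^i\om_\ep^{(n)}$ in $L^\infty([0,T_0],W^{4-2i,2})$ and weak convergence in $L^2([0,T_0],W^{5-2i,2})$, while the Aubin–Simon compactness of Lemma \ref{A-S} (with the chain $W^{5-2i,2}\hookrightarrow\!\!\hookrightarrow W^{4-2i,2}\hookrightarrow W^{3-2i,2}$, say) upgrades to strong convergence of $\p_t^i\om_\ep^{(n)}$ in, e.g., $C^0([0,T_0],W^{3-2i,2})$, which suffices to pass to the limit in every term of the equation (all nonlinearities are in $u$, $v_1$, $R_k$, which are fixed data, so the equation is linear in $\om_\ep$ and convergence is routine) and in the initial and boundary conditions; the time-continuity $\p_t^i\om_\ep\in C^0([0,T_0],W^{4-2i,2})$ follows from Lemma \ref{C^0-em}. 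Uniqueness: given two solutions, their difference $w$ solves the same linear problem with zero data; testing with $w$ and using the $\ep$-coercivity plus Gronwall (Lemma \ref{ode}) forces $w\equiv 0$. The main obstacle I anticipate is \emph{not} the existence per se but the bookkeeping in the differentiated-equation estimates — making sure, at the $\p_t^2$ level, that every term produced by differentiating the variable coefficients $u$, $v_1$ twice in $t$ lands in a space controllable by Lemma \ref{alg} against the available regularity \eqref{es-T-k1} of $u,v_1,\dots,v_{k+1}$ and the inductively known $\p_t^iR_k$ bounds, and that the boundary terms arising when testing with $\De\p_t^i\om_\ep^{(n)}$ genuinely vanish — which is exactly where the Neumann compatibility of Lemma \ref{comp-cond} and the integrability $\n J(u)=0$ of the complex structure are used; the sharp, uniform-in-$\ep$ refinement of these estimates is deferred to Section \ref{s: proof-main-thm} and is not needed here.
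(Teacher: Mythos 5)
Your overall architecture (Galerkin basis of Neumann eigenfunctions, time-differentiated energy estimates, Aubin--Simon compactness) is sound at the zeroth and first time-derivative levels, and indeed matches what the paper does in Appendix \ref{s: parabolic-eq}. But there is a genuine gap at the $\p_t^2$ level, which is exactly the delicate point of this theorem. To test the twice-time-differentiated Galerkin system with $\p_t^2\om_\ep^{(n)}$ or $\De\p_t^2\om_\ep^{(n)}$ and close a Gronwall argument, you need a uniform-in-$n$ bound on $\norm{\p_t^2\om_\ep^{(n)}(0)}_{L^2}$. Since $\p_t^2\om_\ep^{(n)}(0)$ is computed from the equation and involves, schematically, $\De P_n\bigl((\ep I+u_0\times)\De P_n v_k(0)\bigr)$ plus lower order, this requires $\norm{P_n v_k(0)}_{W^{4,2}}\leq C\norm{v_k(0)}_{W^{4,2}}$ uniformly in $n$. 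The projection onto Neumann eigenfunctions is \emph{not} bounded at that order on $W^{4,2}\cap\{f:\frac{\p f}{\p\nu}|_{\p\Om}=0\}$: one would additionally need $\frac{\p\De v_k(0)}{\p\nu}|_{\p\Om}=0$ (note that Lemma \ref{es-P_n} deliberately stops at $W^{3,2}$). That extra condition is precisely the over-determined compatibility condition ($\blacklozenge$) the paper is designed to avoid, and it does not follow from the natural conditions \eqref{com-cond-1}. So your scheme, as written, does not close at level $i=2$; your appeal to Lemma \ref{comp-cond} concerns the limiting solution and does not repair the Galerkin-level initial bound.

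The paper's proof avoids this by a different maneuver. It first invokes the appendix results (Theorems \ref{thm-h} and \ref{thm-h1}, themselves proved by Galerkin but differentiated only \emph{once} in time at the Galerkin level) to produce a $W^{3,2}$-regular solution $\om_\ep$. It then poses the initial--Neumann boundary value problem \eqref{para-Nuemann-eq-p-t-om} satisfied by $\al=\p_t\om_\ep$ as a \emph{new} problem, computes the new initial datum $\p_t\om_\ep(0)=-\ep u_0\times v_{k+1}(0)+v_{k+1}(0)-\<v_k(0),v_1(0)\>u_0$ explicitly, and checks $\frac{\p}{\p\nu}\p_t\om_\ep(0)|_{\p\Om}=0$ using the $(k+1)$-th order compatibility condition $\n_\nu v_{k+1}(0)|_{\p\Om}=0$ together with $\n J(u_0)=0$ --- this is where Lemma \ref{uniform-cc} earns its keep. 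Applying Theorem \ref{thm-h} afresh to this problem yields a regular solution $\al$ with $\p_t\al\in C^0([0,T_0],L^2)\cap L^2([0,T_0],W^{1,2})$, and uniqueness in the low-regularity class identifies $\al=\p_t\om_\ep$; finally the elliptic identity $\ep\De\om_\ep+u\times\De\om_\ep=\p_t\om_\ep+\cdots$ together with Lemma \ref{eq-norm} upgrades $\om_\ep$ to $C^0([0,T_0],W^{4,2})\cap L^2([0,T_0],W^{5,2})$. You should replace your ``differentiate the Galerkin system twice'' step with this re-solve-and-identify argument (or otherwise justify a uniform $W^{4,2}$ projection bound, which is not available under the hypotheses).
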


\begin{proof}
The proof is divided into three steps.
\medskip

\noindent\emph{Step 1: $W^{3,2}$-regular solutions to problem \eqref{para-Nuemann-eq-v_1}.}\

Let
$$f_1=2\p u\otimes u,\quad f_2=\De u\otimes u+|\p u|^2I\quad \mbox{and} \quad f_3=(\ep I+u\times)R_k.$$
Applying the estimates \eqref{es-case-k} and \eqref{es-T-k} with $k\geq 1$, we obtain
\[f_i\in L^\infty([0,T_0], W^{3,2}(\Om))\quad\text{and}\quad \p_tf_i\in L^\infty([0,T_0], W^{1,2}(\Om)),\,\, i=1,2.\]
Furthermore, by Lemma \ref{es-R}, the function $f_3$ satisfies
\[\p^j_t f_3\in L^\infty([0,T_0], W^{4-2j,2}(\Om)),\,\,j=0,1,2.\]
Consequently, Lemma \ref{C^0-em} yields the following improved regularity
\[f_i \in C^0([0,T_0], W^{2,2}(\Om)),\,\, i=1,2,\]
and
\[\p^j_t f_3\in C^0([0,T_0], W^{3-2j,2}(\Om)),\,\, j=0,1.\]

Since $u_0\in W^{2(k+1)+3,2}(\Om)$ satisfies the $(k+1)$-order compatibility conditions \eqref{com-cond-1}, we have $v_i(0)\in W^{2(k+1-2i)+3,2}(\Om)$ and
\begin{align}
\frac{\p v_i(0)}{\p \nu}|_{\p \Om}=\n_\nu v_i(0)|_{\p\Om}-\<v_i(0),\n_\nu u_0\>u_0|_{\p\Om}=0.\label{comp-v-k}
\end{align}
for $1\leq i\leq k+1$. In particular, we have the special case:
\[v_k(0)\in W^{5,2}(\Om),\quad \frac{\p v_k(0)}{\p \nu}|_{\p \Om}=0.\]

Then, applying Theorem \ref{thm-h1} , we conclude that problem \eqref{para-Nuemann-eq-v_1} admits a unique solution $\om_\ep$ in $[0,T_0]$ with the following regularity properties:
\begin{align}
\p^i_t\om_\ep\in C^0([0,T_0], W^{3-2i,2}(\Om))\cap L^2([0, T_0],W^{4-2i}(\Om))\label{W^{3,2}-es-om}
\end{align}
for $i=0,1$. Additionally, we have $\p^2_t \om_\ep\in L^2([0,T_0], L^2(\Om))$.

\medskip
\noindent\emph{Step 2: $W^{2,2}$-regularity of $\p_t\om_\ep$. }\

To improve the regularity of $\om_\ep$, we consider the equation satisfied by $\p_t \om_\ep$:
\begin{equation}\label{para-Nuemann-eq-p-t-om}
\begin{cases}
\p_t \al+\<\al, v_1\>u=\ep(\De \al+2\<\p u, \p \al\>u+\<\De u, \al\>u+|\p u|^2\al)\\
\quad\quad\quad\quad\quad\quad\quad\quad+u\times (\De \al+|\p u|^2\al)+\bar{R},\\[1ex]
\frac{\p \al}{\p \nu}|_{\p\Om}=0,\\[1ex]
\al_0=\p_t \om_\ep(0): \Om\to \Real^K,
\end{cases}
\end{equation}
where we denote
\begin{equation}\label{formula-bar-R}
\begin{aligned}
	\bar{R}=&(\ep I+u\times)\p_tR_k+v_1\times R_k\\
	&-\<\om_\ep,v_2\>u-\<\om_\ep,v_1\>v_1\\
	&+v_1\times(\De \om_\ep+|\p u|^2\om_\ep)+2u\times \<\p u, \p v_1\>\om_\ep\\
	&+\ep(2\<\p v_1, \p \om_\ep\>u+\<\De v_1, \om_\ep\>u+2\<\p v_1,\p u\> \om_\ep)\\
	&+\ep(2\<\p u, \p \om_\ep\>+\<\De u, \om_\ep\>)v_1.
\end{aligned}
\end{equation}

Using the bounds \eqref{es-R1},\eqref{es-T-k} and \eqref{es-T-k1} with $k\geq 1$, along with the regularity property
\[\om_\ep\in  C^0([0,T_0], W^{3,2}(\Om))\cap L^2([0, T_0],W^{4,2}(\Om)),\]
then we derive from Lemma \ref{alg} that the remainder term $\bar{R}$ satisfies
\[\bar{R}\in C^0([0,T_0], L^2(\Om))\cap L^\infty([0,T_0],W^{1,2}(\Om)).\]

On the other hand, from the construction of equation \eqref{eq-para-v_k}, we derive the initial condition
\[\p_t \om_\ep(0)+\<v_k(0), v_1(0)\>u_0=-\ep u_0\times v_{k+1}(0)+v_{k+1}(0).\]
The compatibility conditions \eqref{comp-v-k} further imply that
\[\frac{\p \al_0}{\p \nu}|_{\p\Om}=\frac{\p}{\p \nu}\p_t \om_\ep(0)|_{\p\Om}=0.\]

Now, let
$$f_1=2\p u\otimes u,\quad f_2=\De u\otimes u+|\p u|^2I\quad \mbox{and}\quad f_3=\bar{R}.$$
Applying Theorem \ref{thm-h}, we get a solution $\al$ to the problem on $[0,T_0]$ satisfying
\[\p^i_t\al\in C^0([0,T_0], W^{2-2i,2}(\Om))\cap L^2([0,T_0], W^{3-2i,2}(\Om))\]
for $i=0,1$.

Finally, an argument of uniqueness yields
\[\p^i_t\om_\ep=\al\in  C^0([0,T_0], W^{2-2i,2}(\Om))\cap L^2([0,T_0], W^{3-2i,2}(\Om))\]
for $i=0,1$.
\medskip

\noindent\emph{Step 3: $W^{4,2}$-regularity of $\om_\ep$. }\

From equation \eqref{para-Nuemann-eq-v_1}, we deduce the following elliptic equation for $\om_\ep$:
\begin{align*}
\ep\De \om_\ep+u\times \De \om_\ep=&\p_t \om_\ep+\<\om_\ep, v_1\>u-u\times\om_\ep |\p u|^2-(\ep I+u\times)R_k\\
&-\ep(2\<\p u, \p \om_\ep\>u+\<\De u, \om_\ep\>u+|\p u|^2\om_\ep).
\end{align*}
Then applying the bounds \eqref{es-R1} and \eqref{es-T-k} with $k\geq 1$, along  with the regularity properties
\[\om_\ep\in  C^0([0,T_0], W^{3,2}(\Om))\cap L^2([0, T_0],W^{4,2}(\Om))\]
and
\[\p_t\om_\ep\in  C^0([0,T_0], W^{2,2}(\Om))\cap L^2([0,T_0], W^{3,2}(\Om)),\]
we establish that
\[\ep\De \om_\ep+u\times \De \om_\ep\in L^\infty([0,T_0], W^{2,2}(\Om))\cap L^2([0, T_0],W^{3,2}(\Om)).\]
This further implies that
\[\De \om_\ep \in L^\infty([0,T_0], W^{2,2}(\Om))\cap L^2([0,T_0], W^{3,2}(\Om)).\]
Then, applying Lemma \ref{eq-norm} yields
\[ \om_\ep \in L^\infty([0,T_0], W^{4,2}(\Om))\cap L^2([0,T_0], W^{5,2}(\Om)).\]

By Lemma \ref{C^0-em}, we finally conclude that
\[\om_\ep \in C^0([0,T_0], W^{4,2}(\Om))\cap L^2([0,T_0], W^{5,2}(\Om)).\]
\end{proof}

Furthermore, we can show that $\om_\ep(x,t)\in T_{u(x,t)}\U^2$, i.e., $\<\om_\ep, u\>(x,t)=0$, for all $(x,t)\in \bar{\Om}\times [0,T_0]$.

\begin{prop}\label{tangent-om-ep}
Let $\om_\ep\in C^{0}([0,T_0],W^{2,2}(\Om))\cap L^2([0,T_0], W^{3,2}(\Om))$ be a solution to the problem \eqref{para-Nuemann-eq-v_1}. Then for any $(x,t)\in \bar{\Om}\times [0,T_0]$, $\<\om_\ep, u\>(x,t)=0$.
\end{prop}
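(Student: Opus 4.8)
The plan is to derive a scalar evolution equation for the function $\phi := \langle \om_\ep, u\rangle$ on $\bar\Om \times [0,T_0]$, show that it satisfies a homogeneous linear parabolic problem with zero initial data and Neumann boundary condition, and then conclude $\phi \equiv 0$ by a Gronwall/energy argument. First I would take the $\Real^3$-inner product of the extrinsic equation \eqref{para-Nuemann-eq-v_1} with $u$. Using $|u|^2 = 1$, hence $\langle \p_t u, u\rangle = 0$ and $\langle \n u, u\rangle = 0$, and that the cross-product terms $u \times (\cdots)$ are orthogonal to $u$ (property (3) in the strategy section), most terms on the right-hand side collapse. One must be careful with the $\ep$-terms: $\langle \De \om_\ep, u\rangle = \De\phi - 2\langle \p u, \p\om_\ep\rangle - \langle \De u, \om_\ep\rangle$, while the three terms $\ep(2\langle \p u,\p\om_\ep\rangle u + \langle \De u,\om_\ep\rangle u + |\p u|^2 \om_\ep)$ contribute $\ep(2\langle\p u,\p\om_\ep\rangle + \langle\De u,\om_\ep\rangle + |\p u|^2\phi)$ after pairing with $u$. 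These precisely cancel the unwanted first-order terms coming from $\ep\langle\De\om_\ep,u\rangle$, leaving $\ep\De\phi + \ep|\p u|^2\phi$. Similarly $\langle u\times(\De\om_\ep + |\p u|^2\om_\ep + R_k), u\rangle = 0$, $\langle \p_t\om_\ep, u\rangle = \p_t\phi - \langle\om_\ep,\p_t u\rangle = \p_t\phi - \langle\om_\ep, v_1\rangle$ (noting $v_1 = \p_t u$ up to the normal component $\langle v_1, u\rangle u$; in fact $\langle \p_t u, u\rangle = 0$ so $\langle\om_\ep,\p_t u\rangle = \langle\om_\ep, v_1\rangle$), which cancels the $\langle\om_\ep,v_1\rangle$ on the left, and $\langle (\ep I)R_k, u\rangle = \ep\langle R_k, u\rangle$. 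Since $R_k$ is a section of $u^*T\U^2$ (stated explicitly after \eqref{eq-v_k}), $\langle R_k, u\rangle = 0$, so that term vanishes too.

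The upshot is that $\phi$ satisfies
\[
\begin{cases}
\p_t \phi = \ep \De \phi + \ep|\p u|^2 \phi, & \text{in } \Om \times [0,T_0],\\[1ex]
\frac{\p \phi}{\p \nu}\big|_{\p\Om} = 0,\\[1ex]
\phi(0) = \langle v_k(0), u_0\rangle = 0,
\end{cases}
\]
where the Neumann condition for $\phi$ follows from $\frac{\p\om_\ep}{\p\nu}|_{\p\Om}=0$ together with $\frac{\p u}{\p\nu}|_{\p\Om}=0$ (so $\frac{\p\phi}{\p\nu} = \langle\frac{\p\om_\ep}{\p\nu},u\rangle + \langle\om_\ep,\frac{\p u}{\p\nu}\rangle = 0$), and the zero initial value follows because $v_k(0) = \n_t^k u|_{t=0} \in u_0^*(T\U^2)$ is tangent to $\U^2$ at $u_0$. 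Note the regularity hypotheses $\om_\ep \in C^0([0,T_0],W^{2,2})\cap L^2([0,T_0],W^{3,2})$ and $u$ sufficiently regular (from $\T_k$) are more than enough to justify all these manipulations and to make $\phi$ a legitimate weak/strong solution of this linear parabolic equation.

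For the final step I would test the equation for $\phi$ against $\phi$ itself: $\frac{1}{2}\frac{d}{dt}\norm{\phi}_{L^2}^2 = -\ep\norm{\p\phi}_{L^2}^2 + \ep\int_\Om |\p u|^2 \phi^2 \leq \ep\norm{|\p u|^2}_{L^\infty}\norm{\phi}_{L^2}^2 \leq C\norm{\phi}_{L^2}^2$, using the boundary term vanishes by the Neumann condition and $\norm{\p u}_{L^\infty}$ is bounded on $[0,T_0]$ (by Sobolev embedding from the regularity of $u$). Since $\norm{\phi(0)}_{L^2} = 0$, Lemma \ref{ode} (or Lemma \ref{Gron-inq}) gives $\norm{\phi(t)}_{L^2} = 0$ for all $t \in [0,T_0]$, hence $\phi \equiv 0$, i.e. $\langle\om_\ep, u\rangle(x,t) = 0$ for a.e. $(x,t)$; continuity of $\phi$ then upgrades this to all $(x,t) \in \bar\Om\times[0,T_0]$.

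The main obstacle is purely computational: carefully verifying that the first-order-in-derivative terms in $\langle\De\om_\ep,u\rangle$ are exactly matched and cancelled by the contributions of the $\ep$-correction terms $\ep(2\langle\p u,\p\om_\ep\rangle u + \langle\De u,\om_\ep\rangle u)$ when paired with $u$, so that no stray terms involving $\p\om_\ep$ or $\De\om_\ep$ survive in the equation for $\phi$. This is the reason the parabolic perturbation in \eqref{para-Nuemann-eq-v_1} was written with precisely those correction terms — they are engineered (via the extrinsic formula \eqref{formula-laplace-and-curvature} for $-\n^*\n$) so that the perturbed flow preserves the constraint $\om_\ep \in u^*T\U^2$. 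Everything else (the Neumann condition, the zero initial data, the Gronwall closing) is routine once this algebraic cancellation is confirmed.
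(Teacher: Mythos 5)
Your proposal is correct and follows essentially the same route as the paper: both isolate the normal component $\langle\om_\ep,u\rangle$, observe that the cross-product and $R_k$ terms drop out by tangency/orthogonality, and close with a Gronwall argument from zero initial data. The only cosmetic difference is that you first derive the clean scalar equation $\p_t\phi=\ep\De\phi+\ep|\p u|^2\phi$ via the exact cancellation of the first-order terms, whereas the paper estimates the $L^2$ energy of $\om_\ep^{\perp}$ directly and absorbs those terms by interpolation; both are valid.
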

\begin{proof}
For simplicity, we decompose $\om_\ep$ as
\[\om_\ep=\om_\ep^{\top}+\om_\ep^{\perp},\]
where $\om_\ep^{\perp}$ denotes the vertical part of $\om_\ep$, i.e. $\om_\ep^{\perp}=\<\om_\ep,u\>u$, and $\om_\ep^{\top}$ is the tangent part.
	
By employing equation \eqref{para-Nuemann-eq-v_1}, a straightforward calculation yields
\begin{align*}
\frac{1}{2}\p_t\int_{\Om}|\om^{\perp}_\ep|^2dx=&\int_{\Om}\<\om_\ep, u\>(\<\p_t \om_\ep, u\>+\<\om_\ep, v_1\>)dx\\
=&\int_{\Om}\<\p_t \om_\ep+\<\om_\ep, v_1\>u, \om^{\perp}_\ep\>dx\\
=&\ep\int_{\Om}\<\De \om_\ep+2\<\p u,\p \om_\ep\>u+\<\De u, \om_\ep\>u+|\p u|^2 \om_\ep, \om^{\perp}_\ep\>dx\\
=&\ep\int_{\Om}\<\De \om^{\perp}_\ep+2\<\p u,\p \om^{\perp}_\ep\>u+\<\De u, \om^{\perp}_\ep\>u+|\p u|^2 \om_\ep^{\perp}, \om^{\perp}_\ep\>dx\\
\leq &-\ep\int_{\Om}|\p\om^{\perp}_\ep|^2dx+C\norm{u}_{W^{3,2}}\norm{\om^{\perp}_\ep}_{W^{1,2}}\norm{\om^{\perp}_\ep}_{L^2}+C\norm{u}^{2}_{W^{3,2}}\norm{\om^{\perp}_\ep}^2_{L^2}\\
\leq &C_\ep(\norm{u}^4_{W^{3,2}}+1)\norm{\om^{\perp}_\ep}^2_{L^2}.
\end{align*}
Here we have used the fact $R_k$ is a section of $u^*T\U^2$, and applied the following formula
\begin{align*}
&\De \om^{\top}_\ep+2\<\p u,\p \om^{\top}_\ep\>u+\<\De u, \om^{\top}_\ep\>u+|\p u|^2 \om^{\top}_\ep\\
=&-\n^*\n \om_\ep^{\top}+R^{\mathbb{S}^2}(\om^{\top}_\ep, \n_iu)\n_iu\in T_u\mathbb{S}^2
\end{align*}
to show
\begin{align*}
\int_{\Om}\<\De \om^{\top}_\ep+2\<\p u,\p \om^{\top}_\ep\>u+\<\De u, \om^{\top}_\ep\>u+|\p u|^2 \om_\ep^{\top}, \om^{\perp}_\ep\>dx=0.
\end{align*}
Since $\om_\ep^\perp(0)=0$, Gronwall's inequality implies that
\begin{align*}
\int_{\Om}|\om^{\perp}_\ep|^2dx(t)\leq C(T_0,\norm{u}_{L^\infty([0,T_0],W^{3,2})})\int_{\Om}|\om^{\perp}_\ep|^2dx(0)=0
\end{align*}
for any $0\leq t\leq T_0$.
\end{proof}

Proposition \ref{tangent-om-ep} tells us that $\om_\ep$ is a section of the bundle $u^*(T\U^2)$. Since equation \eqref{ex-eq-para-vk} is the extrinsic formulation of equation \eqref{eq-para-v_k}, then $\om_\ep$ also satisfies the intrinsic problem \eqref{para-Nuemann-eq-v_k}, i.e.
\begin{equation*}
\begin{cases}
\n_t \om_\ep=(\ep I+u\times )(-\n^*\n \om_\ep+R^{\U^2}(\om_\ep, \n_j u)\n_j u+R_k),\\[1ex]
\frac{\p \om_\ep}{\p \nu}|_{\p\Om}=0,\\[1ex]
\om_\ep(0)=v_k(0): \Om\to u^*_0(T\U^2).
\end{cases}
\end{equation*}
\medskip

\subsection{Uniform energy estimates for $\om_\ep$}\label{ss: uniform-en-es-1}\

In this part, we establish uniform $W^{4,2}$-energy estimates for $\om_\ep$ with respect to $\ep \in (0,1)$. We begin by proving several critical equivalent estimates for Sobolev norms of $\om_\ep$.

\begin{lem}\label{equiv-es-om-ep}
There exists a constant $C$ independent of $\ep$ such that the solution $\om_\ep$ of the problem \eqref{para-Nuemann-eq-v_1} obtained in Theorem \ref{W^{4,2}-solu-om-ep} satisfies
\begin{align}
\norm{\om_\ep}^2_{W^{2,2}}\leq &C(\norm{\p_t \om_\ep}^2_{L^2}+\norm{\om_\ep}_{L^2}+1),\label{equiv-es-om-1}\\
\norm{\om_\ep}^2_{W^{3,2}}\leq &C(\norm{\p_t \om_\ep}^2_{W^{1,2}}+\norm{\om_\ep}_{L^2}+1),\label{equiv-es-om-2}\\
\norm{\om_\ep}^2_{W^{4,2}}+\norm{\p_t \om_\ep}^2_{W^{2,2}}\leq &C(\norm{\p^2_t \om_\ep}^2_{L^2}+\norm{\p_t \om_\ep}^2_{L^2}+\norm{\om_\ep}_{L^2}+1).\label{equiv-es-om-3}
\end{align}
\end{lem}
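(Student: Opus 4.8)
The plan is to reduce each of the three bounds to an estimate for a Sobolev norm of $\De\om_\ep$ via the equivalent‑norm Lemma~\ref{eq-norm} (applied with $k=0,1,2$), and to extract $\De\om_\ep$ from the equation itself. By Proposition~\ref{tangent-om-ep}, $\om_\ep$ is a section of $u^*(T\U^2)$, so, writing $J(u)=u\times$, the intrinsic equation \eqref{eq-para-v_k} reads $\n_t\om_\ep=(\ep I+J(u))S_\ep$ with $S_\ep:=-\n^*\n\om_\ep+R^{\U^2}(\om_\ep,\n_ju)\n_ju+R_k$, where \emph{both} $\n_t\om_\ep$ and $S_\ep$ are tangent to $\U^2$. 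On $T_u\U^2$ one has $J(u)^2=-I$, so $\ep I+J(u)$ is invertible there with inverse $\tfrac{1}{1+\ep^2}(\ep I-J(u))$; using $\<J(u)Y,Y\>=0$ and $|J(u)Y|=|Y|$ this inverse has operator norm $(1+\ep^2)^{-1/2}\le 1$, \emph{uniformly in $\ep$}. Hence $S_\ep=(\ep I+J(u))^{-1}\n_t\om_\ep$, and this identity, together with its covariant spatial derivatives, drives the whole argument. All constants $C$ below depend only on the bounds for $u,v_1,v_2$ furnished by the property $\T_k$ and on the embeddings $W^{2,2}(\Om)\hookrightarrow L^\infty$, $W^{3,2}(\Om)\hookrightarrow W^{1,\infty}$ valid for $m\le 3$; products of such controlled coefficients with $\om_\ep$ are estimated by Lemma~\ref{alg}.

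For \eqref{equiv-es-om-1}, the structural identity gives $\norm{\n^*\n\om_\ep}_{L^2}\le\norm{\n_t\om_\ep}_{L^2}+\norm{R^{\U^2}(\om_\ep,\n_ju)\n_ju}_{L^2}+\norm{R_k}_{L^2}$; since $\n_t\om_\ep=\p_t\om_\ep+\<\om_\ep,v_1\>u$, the curvature term equals $|\p u|^2\om_\ep-\<\om_\ep,\p_ju\>\p_ju$, and $\norm{R_k}_{L^2}\le C$ by Lemma~\ref{es-R}, we get $\norm{\n^*\n\om_\ep}_{L^2}\le\norm{\p_t\om_\ep}_{L^2}+C\norm{\om_\ep}_{L^2}+C$. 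By \eqref{formula-laplace-and-curvature}, $\De\om_\ep$ equals $-\n^*\n\om_\ep$ minus terms of the form (controlled coefficient) times $\p\om_\ep$ or $\om_\ep$, so $\norm{\De\om_\ep}_{L^2}\le\norm{\n^*\n\om_\ep}_{L^2}+C\norm{\p\om_\ep}_{L^2}+C\norm{\om_\ep}_{L^2}$. Lemma~\ref{eq-norm} with $k=0$ gives $\norm{\om_\ep}_{W^{2,2}}\le C(\norm{\om_\ep}_{L^2}+\norm{\De\om_\ep}_{L^2})$, and the Gagliardo--Nirenberg inequality of Theorem~\ref{G-N2}, namely $\norm{\p\om_\ep}_{L^2}\le C\norm{\om_\ep}_{W^{2,2}}^{1/2}\norm{\om_\ep}_{L^2}^{1/2}$, lets us absorb the first‑derivative term by Young's inequality. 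The outcome is $\norm{\om_\ep}_{W^{2,2}}\le C(\norm{\p_t\om_\ep}_{L^2}+\norm{\om_\ep}_{L^2}+1)$, and squaring gives \eqref{equiv-es-om-1}.

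For \eqref{equiv-es-om-2} and \eqref{equiv-es-om-3} I differentiate $\n_t\om_\ep=(\ep I+J(u))S_\ep$ covariantly in space; since the complex structure is parallel, $\n J(u)=0$, this yields $\n^\ell S_\ep=(\ep I+J(u))^{-1}\n^\ell\n_t\om_\ep$ for $\ell=1,2$, with the same uniform operator bound and both sides still tangent. For $\ell=1$: bounding $\norm{\n\n_t\om_\ep}_{L^2}$ (whose only top‑order term is $\p\p_t\om_\ep$) by $C(\norm{\p_t\om_\ep}_{W^{1,2}}+\norm{\om_\ep}_{W^{1,2}})$, using Lemma~\ref{es-R} for $\norm{R_k}_{W^{1,2}}$, re‑expressing $\p\De\om_\ep$ in terms of $\n\n^*\n\om_\ep$ plus terms controlled by $\norm{\om_\ep}_{W^{2,2}}$, applying Lemma~\ref{eq-norm} with $k=1$, and bounding $\norm{\om_\ep}_{W^{2,2}}$ by the already‑proved \eqref{equiv-es-om-1}, I obtain $\norm{\om_\ep}_{W^{3,2}}\le C(\norm{\p_t\om_\ep}_{W^{1,2}}+\norm{\om_\ep}_{L^2}+1)$, which after squaring is \eqref{equiv-es-om-2}. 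The bound on $\norm{\om_\ep}_{W^{4,2}}$ in \eqref{equiv-es-om-3} follows the same way with $\ell=2$ and Lemma~\ref{eq-norm} with $k=2$, reducing it to $\norm{\p_t\om_\ep}_{W^{2,2}}$, $\norm{\om_\ep}_{W^{2,2}}$ and $\norm{R_k}_{W^{2,2}}$ (the last bounded by Lemma~\ref{es-R}). Finally, $\norm{\p_t\om_\ep}_{W^{2,2}}$ is obtained by running the argument for \eqref{equiv-es-om-1} on $\p_t\om_\ep$, which satisfies the parabolic problem \eqref{para-Nuemann-eq-p-t-om} of the same structure (its normal component $\<\p_t\om_\ep,u\>=-\<\om_\ep,v_1\>$ being harmless lower order), with inhomogeneity $\bar R$ given by \eqref{formula-bar-R} satisfying, by Lemma~\ref{es-R}, the $\T_k$‑bounds and \eqref{equiv-es-om-1}, $\norm{\bar R}_{L^2}\le C(\norm{\om_\ep}_{W^{2,2}}+1)\le C(\norm{\p_t\om_\ep}_{L^2}+\norm{\om_\ep}_{L^2}+1)$; this yields $\norm{\p_t\om_\ep}_{W^{2,2}}\le C(\norm{\p_t^2\om_\ep}_{L^2}+\norm{\p_t\om_\ep}_{L^2}+\norm{\om_\ep}_{L^2}+1)$, and adding the two contributions proves \eqref{equiv-es-om-3}.

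The main obstacle is keeping every constant independent of $\ep$: one must never apply $(\ep I+J(u))^{-1}$ to a field with a nonzero normal component, since on the normal line $\Real u$ this inverse is $\ep^{-1}$. This is precisely why the scheme is organized intrinsically, so that $\n_t\om_\ep$ and its covariant spatial derivatives remain tangent, and, by $\n J(u)=0$, covariant differentiation commutes with the inversion of $\ep I+J(u)$. The remaining work is bookkeeping: one checks that after solving for $\n^*\n\om_\ep$ (and its covariant derivatives) the leftover terms involve only $\om_\ep$ — never its highest‑order derivatives — times coefficients built from $u,v_1,v_2$ that are bounded through $\T_k$, so that Lemma~\ref{eq-norm} together with the interpolation inequalities of Theorem~\ref{G-N2} closes each of the three estimates.
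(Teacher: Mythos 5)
Your proposal is correct and follows essentially the same route as the paper: invert $\ep I+u\times$ on tangent fields (uniformly in $\ep$) to express $\De\om_\ep$ through $\p_t\om_\ep$ plus controlled lower-order terms, then differentiate that identity in space (for \eqref{equiv-es-om-2} and the $W^{4,2}$ part of \eqref{equiv-es-om-3}) and in time (for the $\norm{\p_t\om_\ep}_{W^{2,2}}$ part), closing each estimate with Lemma \ref{eq-norm} and interpolation. The only cosmetic difference is that you phrase the differentiation covariantly via $\n J(u)=0$, while the paper differentiates the extrinsic identity \eqref{eq-De-om-ep} directly; the substance and the $\ep$-uniformity mechanism are identical.
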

\begin{proof}
Since $\om_\ep\in T_u\mathbb{S}^2$ is a solution to \eqref{para-Nuemann-eq-v_k} in $[0,T_0]$, satisfying estimates \eqref{W^{3,2}-es-om}, we get the following evolution equation.
\[\frac{1}{1+\ep^2}(\ep \n_t \om_\ep-u\times \n_t \om_\ep)=-\n^*\n \om_\ep+R^{\U^2}(\om_\ep, \n_j u)\n_j u+R_k.\]
Using formula \eqref{formula-laplace-and-curvature}, we further obtain
\begin{equation}\label{eq-De-om-ep}
\begin{aligned}
	\De \om_\ep=&\frac{1}{1+\ep^2}(\ep \p_t \om_\ep-u\times \p_t\om_\ep)+\ep\om_\ep\#v_1\#u\\
	&+\p u\#\p \om_\ep\# u+\De u\# \om_\ep\# u+\p u\#\p u\# \om_\ep+R_k.
\end{aligned}
\end{equation}

Then, applying the estimates \eqref{es-T-k} and \eqref{es-R1}, we derive from the formula \eqref{eq-De-om-ep} that
\begin{align*}
	\norm{\om_\ep}^2_{W^{2,2}}:=&\norm{\om_\ep}^2_{L^2}+\norm{\De \om_\ep}^2_{L^2}\\
	\leq &C(\norm{\p_t \om_\ep}^2_{L^2}+\norm{\om_\ep}^2_{W^{1,2}}+1)\\
	\leq &C(\norm{\p_t \om_\ep}^2_{L^2}+\norm{\om_\ep}^2_{L^2}+1)+\frac{1}{2}\norm{\om_\ep}^2_{W^{2,2}},
\end{align*}
where we used the following interpolation inequality
\[\norm{\p \om_\ep}_{L^2}\leq \norm{\om_\ep}^{1/2}_{W^{2,2}}\norm{\om_\ep}^{1/2}_{L^2}.\]
This implies the estimate \eqref{equiv-es-om-1}.

Taking the derivative for equation \eqref{eq-De-om-ep} gives
\begin{align*}
	\p \De \om_\ep=&\frac{1}{1+\ep^2}(\ep \p\p_t \om_\ep-u\times \p\p_t\om_\ep)+\p u\# \p_t \om_\ep+\ep\p(\om_\ep\#v_1\#u)\\
	&+\p(\p u\#\p \om_\ep\# u+\De u\# \om_\ep\# u+\p u\#\p u\# \om_\ep)+\p R_k.
\end{align*}
By estimates \eqref{es-T-k} and \eqref{es-R1}, it is not difficult to show
\begin{align*}
	\norm{\p \De \om_\ep}^2_{L^2}\leq &C(\norm{\p_t \om_\ep}^2_{W^{1,2}}+\norm{\om_\ep}^2_{W^{2,2}}+1)\\
	\leq &C(\norm{\p_t \om_\ep}^2_{W^{1,2}}+\norm{\om_\ep}^2_{L^2}+1),
\end{align*}
which yields the desired estimates \eqref{equiv-es-om-2}.

Finally, we establish the inequality \eqref{equiv-es-om-3}. Differentiating equation \eqref{eq-De-om-ep} twice yields the following identity:
\begin{align*}
	\p^2 \De \om_\ep=&\frac{1}{1+\ep^2}(\ep \p^2\p_t \om_\ep-u\times \p^2\p_t\om_\ep)+\p^2 u\# \p_t \om_\ep+\p u\# \p \p_t \om_\ep\\
	&+\ep\p^2(\om_\ep\#v_1\#u)+\p^2(\p u\#\p \om_\ep\# u+\De u\# \om_\ep\# u+\p u\#\p u\# \om_\ep)\\
	&+\p^2R_k.
\end{align*}
Then we can derive from the estimates \eqref{es-T-k} and \eqref{es-R1} that
\begin{equation}\label{es-p^2De-om-1}
	\begin{aligned}
		\norm{\p^2\De \om_\ep}^2_{L^2}\leq &C(\norm{\p_t \om_\ep}^2_{W^{2,2}}+\norm{\om_\ep}^2_{W^{3,2}}+1)\\
		\leq &C(\norm{\p_t \om_\ep}^2_{W^{2,2}}+\norm{\om_\ep}^2_{L^2}+1).
	\end{aligned}
\end{equation}

On the other hand, by \eqref{eq-De-om-ep}, $\p_t\om_\ep$ satisfies
\begin{equation}\label{eq-Dep_t-om}
\begin{aligned}
	\De \p_t\om_\ep=&\frac{1}{1+\ep^2}(\ep \p^2_t \om_\ep-u\times \p^2_t\om_\ep)+v_1\# \p_t \om_\ep+\ep\p_t(\om_\ep\#v_1\#u)\\
	&+\p_t(\p u\#\p \om_\ep\# u+\De u\# \om_\ep\# u+\p u\#\p u\# \om_\ep)+\p_tR_k,
\end{aligned}
\end{equation}
then we apply the H\"older inequality to show
\begin{equation}\label{es-p^2De-om-2}
	\begin{aligned}
	\norm{\De \p_t \om_\ep}^2_{L^2}\leq &C\norm{\p^2_t \om_\ep}^2_{L^2}+C\norm{\p_t \om_\ep}^2_{W^{1,2}}+C\norm{\p_t R_k}^2_{L^2}\\
	&+C(\norm{v_2}^2_{L^2}+\norm{\De v_1}^2_{L^2})\norm{\om_\ep}^2_{L^\infty}+C\norm{\om_\ep}^2_{W^{3,2}}\\
	\leq&C\norm{\p^2_t \om_\ep}^2_{L^2}+C\norm{\p_t \om_\ep}^2_{W^{1,2}} +C\norm{\om_\ep}^2_{W^{3,2}}+C\\
	\leq &C\norm{\p^2_t \om_\ep}^2_{L^2}+C\norm{\p_t \om_\ep}^2_{L^2} +C\norm{\om_\ep}^2_{L^2}+\frac{1}{2}\norm{\De \p_t \om_\ep}^2_{L^2}+C,
	\end{aligned}
\end{equation}
Here we applied the following interpolation inequality:
\[\norm{\p \p_t\om_\ep}_{L^2}\leq \norm{\p_t\om_\ep}^{1/2}_{W^{2,2}}\norm{\p_t\om_\ep}^{1/2}_{L^2}\leq C\(\norm{\p_t\om_\ep}_{L^2}+\norm{\De \p_t \om_\ep}_{L^2}\)^{1/2}\norm{\p_t\om_\ep}^{1/2}_{L^2}.\]

Combining estimates \eqref{es-p^2De-om-1} with \eqref{es-p^2De-om-2}, we employ Lemma \eqref{eq-norm} to obtain the desired bound \eqref{equiv-es-om-3}.
\end{proof}

We also need to show a geometric property concerning the special structure of $\U^2$.
\begin{lem}\label{alg-sphere}
For any $p\in \mathbb{S}^2$ and $X_i\in T_{p}\mathbb{S}^2$ with $i=1,2,3$, we have
	\[\<X_1\times X_2, X_3\>=0.\]
\end{lem}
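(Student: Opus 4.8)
The plan is to exploit the elementary fact that for the round sphere the tangent space at a point is realized as a genuine linear subspace of $\Real^3$, together with the classical identification of the scalar triple product with a determinant. Concretely, recall that
\[
T_p\U^2=\{X\in\Real^3:\<X,p\>=0\},
\]
which is a $2$-dimensional linear subspace of $\Real^3$. For any three vectors $X_1,X_2,X_3\in\Real^3$ one has the standard identity
\[
\<X_1\times X_2,\,X_3\>=\det\begin{pmatrix}X_1\\ X_2\\ X_3\end{pmatrix},
\]
the determinant of the $3\times3$ matrix whose rows are $X_1,X_2,X_3$. So the proof reduces to observing that this determinant vanishes.

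The argument then runs as follows. First I would note that if $X_1,X_2,X_3$ all lie in $T_p\U^2$, then they are three vectors lying in a linear subspace of dimension $2$, hence they are linearly dependent. Consequently the matrix with rows $X_1,X_2,X_3$ is singular, so its determinant is zero, which by the triple-product identity gives $\<X_1\times X_2,X_3\>=0$. (If one prefers a coordinate-free phrasing: $X_1\times X_2$ is orthogonal to the plane spanned by $X_1$ and $X_2$; when $X_1,X_2$ are independent this plane is all of $T_p\U^2$, so $X_1\times X_2$ is parallel to $p$ and hence orthogonal to $X_3\in T_p\U^2$; when $X_1,X_2$ are dependent, $X_1\times X_2=0$ and the claim is trivial.) There is no real obstacle here — the only thing to be slightly careful about is to handle the degenerate case where $X_1$ and $X_2$ are linearly dependent, but this is immediate from either formulation. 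This lemma will be invoked repeatedly in the energy estimates to kill terms of the form $\<u\times(\cdots),\,\cdots\>$ in which all the relevant vectors are tangential to $\U^2$.
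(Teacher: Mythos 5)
Your proof is correct and is essentially the same argument as the paper's: both reduce to the fact that $T_p\U^2$ is a $2$-dimensional linear subspace of $\Real^3$, so $X_1,X_2,X_3$ are linearly dependent and the scalar triple product vanishes (the paper writes $X_3=\lambda_1X_1+\lambda_2X_2$ in the nondegenerate case, which is just your determinant/dependence observation phrased differently). Your explicit handling of the degenerate case is, if anything, slightly more careful than the paper's.
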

\begin{proof}
If there exists a  constant $\lambda$ such that $X_1=\lambda X_2$, then the cross product vanishes:
\[X_1\times X_2=0.\]
So, we may assume that $X_1$ and $X_2$ are linearly independent. Then there exists constant $\lambda_i$ with $i=1,2$, such that
	\[X_3=\lambda_1X_1+\lambda_2X_2,\]
which implies that
\[\<X_1\times X_2, X_3\>=0.\]
\end{proof}

\subsubsection{\bf{Uniform $L^2$-estimates of $\om_\ep$.}} Now, we establish uniform energy estimates for $\om_\ep$. Since $\frac{\p \om_\ep}{\p \nu}|_{\p\Om\times[0,T_0]}=0$ and $\om_\ep\in T_u\U^2$, testing equation \eqref{para-Nuemann-eq-v_1} with $\om_\ep$ gives
\begin{equation}\label{energy-es-om-1}
\begin{aligned}
\frac{1}{2}\p_t \int_{\Om}|\om_\ep|^2dx\leq &-\ep\int_{\Om}|\p \om_\ep|^2dx-\int_{\Om}\<\p_i u\times \p_i \om_\ep, \om_\ep\>dx+C\int_{\Om}|R_k||\om_\ep|dx\\
&+C\ep\int_{\Om}|\De u| |\om_\ep|^2dx+C\ep\int_{\Om}|\p u| |\p \om_\ep||\om_\ep|dx+C\int_{\Om}|\p u|^2|\om_\ep|^2dx\\
\leq &-\frac{\ep}{2}\int_{\Om}|\p \om_\ep|^2dx+C(\norm{u}^2_{W^{4,2}}+1)(\norm{\om_\ep}^2_{L^2}+1),
\end{aligned}
\end{equation}
where we have applied Lemma \ref{alg-sphere} to show
\begin{align*}
\<\p_i u\times \p_i \om_\ep, \om_\ep\>=&\<\p_i u\times (\p_i \om_\ep)^\perp,\om_\ep\>\\
=&\<\p_i u\times u,\om_\ep\>\<\p_i\om_\ep, u\>\\
=&-\<\p_i u\times u,\om_\ep\>\<\om_\ep, \p_iu\>.
\end{align*}

Using estimate \eqref{es-T-k}, we derive from Gronwall's inequality that
\begin{align}
\sup_{0\leq t\leq T_0}\norm{\om_\ep}^2_{L^2}\leq C(T_0)(\norm{v_k(0)}^2_{L^2}+1)\leq C(T_0, \norm{u_0}^2_{W^{2k,2}}).\label{L^2-bound-om}
\end{align}

\subsubsection{\bf{Uniform $W^{2,2}$-estimates of $\om_\ep$.}} Next, we establish uniform $W^{2,2}$-energy estimates for $\om_\ep$. To this end, we consider the equation satisfied by $\p_t \om_\ep$:
\begin{equation}\label{eq-p_t-om}
\begin{aligned}
	\p_t \p_t \om_\ep+\<\p_t \om_\ep, v_1\>u=&\ep(\De\p_t \om_\ep+2\<\p u, \p \p_t \om_\ep\>u+\<\De u, \p_t \om_\ep\>u+|\p u|^2\p_t \om_\ep)\\
	&+u\times (\De \p_t \om_\ep+|\p u|^2\p_t \om_\ep)+\bar{R},
\end{aligned}
\end{equation}
where the remainder term $\bar{R}$ is defined in \eqref{formula-bar-R}.

The boundary condition $\frac{\p \om_\ep}{\p \nu}|_{\p\Om\times[0,T_0]}=0$ allows us to apply Lemma \ref{comp-cond}, which yields the corresponding Neumann condition for the time derivative:
\[\frac{\p }{\p \nu}\p_t\om_\ep|_{\p \Om\times[0,T_0]}=0.\]

Then, taking $\p_t \om_\ep$ as a test function for equation \eqref{eq-p_t-om}, we obtain
\begin{equation}\label{energy-es-om-2}
\begin{aligned}
\frac{1}{2}\p_t \int_{\Om}|\p_t\om_\ep|^2dx\leq &-\ep\int_{\Om}|\p\p_t \om_\ep|^2dx+C\int_{\Om}|v_1||\p_t\om_\ep|^2dx\\
&+\int_{\Om}\<u\times \De \p_t\om_\ep, \p_t \om_\ep\>dx\\
&+C\ep\int_{\Om}|\De u| |\p_t\om_\ep|^2dx+C\ep\int_{\Om}|\p u| |\p\p_t \om_\ep||\p_t\om_\ep|dx\\
&+C\ep\int_{\Om}|\p u|^2|\p_t\om_\ep|^2dx+C\int_{\Om}|\bar{R}||\p_t \om_\ep|dx\\
\leq &-\frac{\ep}{2}\int_{\Om}|\p\p_t \om_\ep|^2dx+\int_{\Om}\<u\times \De \p_t\om_\ep, \p_t \om_\ep\>dx\\
&+C(\norm{u}^2_{W^{3,2}}+\norm{v_1}^2_{W^{2,2}}+1)\norm{\p_t\om_\ep}^2_{L^2}+C\norm{\bar{R}}^2_{L^2}.
\end{aligned}
\end{equation}
Here, the third term on the right hand side of \eqref{energy-es-om-2} admits the following estimate:
\begin{align*}
&\int_{\Om}\<u\times \De \p_t\om_\ep, \p_t \om_\ep\>dx=\int_{\Om}\<u\times \De \p_t\om_\ep, (\p_t \om_\ep)^{\top}\>dx\\
=&-\int_{\Om}\<u\times \p \p_t\om_\ep, \p((\p_t \om_\ep)^{\top})\>dx-\int_{\Om}\<\p u\times (\p \p_t\om_\ep)^\perp, (\p_t \om_\ep)^{\top}\>dx\\
=&\int_{\Om}\<u\times \p \p_t\om_\ep, \p(\<\p_t \om_\ep, u\>u)\>dx+\int_{\Om}\<\p u\times u, (\p_t \om_\ep)^{\top}\>\<\p u, \p_t\om_\ep\>dx\\
&+\int_{\Om}\<\p u\times u, (\p_t \om_\ep)^{\top}\>\p\<v_1, \om_\ep\>dx\\
=&-\int_{\Om}\<u\times \p \p_t\om_\ep, \p(\<\om_\ep, v_1\>u)\>dx+\int_{\Om}\<\p u\times u, \p_t \om_\ep\>\<\p u, \p_t\om_\ep\>dx\\
&+\int_{\Om}\<\p u\times u, \p_t \om_\ep\>\p\<v_1, \om_\ep\>dx\\
=&-\int_{\Om}\<u\times \p \p_t\om_\ep, \<\om_\ep, v_1\>\p u\>dx+\int_{\Om}\<\p u\times u, \p_t \om_\ep\>\<\p u, \p_t\om_\ep\>dx\\
&+\int_{\Om}\<\p u\times u, \p_t \om_\ep\>\p\<v_1, \om_\ep\>dx\\
=&-\int_{\Om}\<\p_t\om_\ep, \p_i(\<\om_\ep, v_1\>u\times \p_i u)\>dx+\int_{\Om}\<\p u\times u, \p_t \om_\ep\>\<\p u, \p_t\om_\ep\>dx\\
&+\int_{\Om}\<\p u\times u, \p_t \om_\ep\>\p\<v_1, \om_\ep\>dx\\
\leq &\int_{\Om}(|\p \om_\ep||v_1||\p u|+|\om_\ep||v_1|^2+|\om_\ep||\p v_1||\p u|)|\p_t \om_\ep|dx+C\int_{\Om}|\p u|^2|\p_t \om_\ep|^2dx\\
\leq &C(\norm{u}^2_{W^{3,2}}+\norm{v_1}^2_{W^{2,2}}+1)^2\norm{\om_\ep}^2_{W^{1,2}}+C\norm{\p_t \om_\ep}^2_{L^2}\\
\leq &C(\norm{\p_t \om_\ep}^2_{L^2}+\norm{\om_\ep}^2_{L^2}),
\end{align*}
where we employed Lemma \ref{alg-sphere} to show
\[\<\p u\times (\p \p_t\om_\ep)^{\top}, (\p_t \om)^{\top}\>=0.\]

It remains to control the $L^2$-norm of the term $\bar{R}$. From its explicit expression given in \eqref{formula-bar-R}, we decompose $\bar{R}$ as
\begin{align*}
\bar{R}=&(\ep I+u\times)\p_tR_k+v_1\times R_k\\
&-\<\om_\ep,v_2\>u-\<\om_\ep,v_1\>v_1\\
&+v_1\times(\De \om_\ep+|\p u|^2\om_\ep)+2u\times \<\p u, \p v_1\>\om_\ep\\
&+\ep(2\<\p v_1, \p \om_\ep\>u+\<\De v_1, \om_\ep\>u+2\<\p v_1,\p u\> \om_\ep)\\
&+\ep(2\<\p u, \p \om_\ep\>+\<\De u, \om_\ep\>)v_1.
\end{align*}
Then we derive from estimates \eqref{es-T-k} and \eqref{es-R1} that
\begin{align*}
\norm{\bar{R}}^2_{L^2}\leq& C(\norm{u}^2_{W^{3,2}}+\norm{v_1}^2_{W^{2,2}}+\norm{v_2}^2_{L^2}+1)^2\norm{\om_\ep}^2_{W^{2,2}}\\
&+C(\norm{R_k}^2_{L^2}+\norm{\p_t R_k}^2_{L^2})\\
\leq &C(\norm{\p_t \om_\ep}^2_{L^2}+\norm{\om_\ep}^2_{L^2}+1).
\end{align*}

Therefore, substituting these estimates into \eqref{energy-es-om-2}, we obtain
\begin{align*}
\frac{1}{2}\p_t \int_{\Om}|\p_t\om_\ep|^2dx\leq C(\norm{\p_t \om_\ep}^2_{L^2}+\norm{\om_\ep}^2_{L^2}+1).
\end{align*}
Now, by applying the equivalent estimates \eqref{equiv-es-om-1} and Gronwall's inequality, we obtain the following uniform bound:
\begin{align}
\sup_{0\leq t\leq T_0}(\norm{\om_\ep}^2_{W^{2,2}}+\norm{\p_t \om_\ep}^2_{L^2})\leq C(T_0, \norm{u_0}^2_{W^{2k+2}}).\label{W^{2,2}-bound-om}
\end{align}
Here we used the following expression of the initial data $\p_t\om_\ep(0)$
\begin{align*}
\p_t \om_\ep(0)=&-\<v_k(0), v_1(0)\>u_0\\
&+\ep(\De v_k(0)+2\<\p u_0, \p v_k(0)\>u+\<\De u_0, v_k(0)\>u+|\p u_0|^2v_k(0))\\
&+u_0\times (\De v_k(0)+|\p u_0|^2v_k(0))+(\ep I+u_0\times)R_k(0)
\end{align*}
to establish the estimate:
\[\norm{\p_t \om_\ep(0)}^2_{L^2(\Om)}\leq C(\norm{u_0}_{W^{2k+2,2}}).\]

\subsubsection{\bf{Uniform $W^{4,2}$-estimates of $\om_\ep$.}} By estimates \eqref{equiv-es-om-3} and \eqref{W^{2,2}-bound-om}, to establish uniform $W^{4,2}$-estimates of $\om_\ep$, it suffices to obtain a uniform bound for the $L^2$-norm of $\p^2_t \om_\ep$. For this purpose, we analyze the equation governing $\p^2_t \om_\ep$.

Since $\om_\ep\in T_u\mathbb{S}^2$ satisfies $\<v_1,\om_\ep\>=-\<u, \p_t \om_\ep\>$, we can derive from equation \eqref{para-Nuemann-eq-v_1} that
\begin{align*}
\p_t \om_\ep-\<u, \p_t \om_\ep\>u=&\ep(\De \om_\ep+\p_i(\<\p_i u, \om_\ep\>u)+\p u\# \p \om_\ep \#u+|\p u|^2\om_\ep)\\
&+u\times (\De \om_\ep+|\p u|^2\om_\ep)+(\ep I+ u\times) R_k.
\end{align*}
Differentiating this equation twice with respect to $t$ gives
\begin{align*}
 \p^2_t\p_t \om_\ep-\p^2_t(\<u, \p_t \om_\ep\>u)=&\ep\De \p^2_t\om_\ep+u\times \De \p_t^2\om_\ep+\ep\p_i\p^2_t(\<\p_i u, \om_\ep\>u)\\
 &+\ep\p^2_t(\p u\#\p \om_\ep\# u+\p u\#\p u\# \om_\ep)\\
 &+v_2\#\De \om_\ep+v_1\# \De \p_t\om_\ep+\p^2_t(|\p u|^2u\times \om_\ep)\\
 &+(\ep I+u\times)\p^2_tR_k+v_2\# R_k+v_1\# \p_t R_k,
\end{align*}
where the second term on the left hand side expands as
\begin{align*}
\p^2_t(\<u, \p_t \om_\ep\>u)=&\p_t(\<\p^2_t \om_\ep, u\>u)+v_1\#\p^2_t\om_\ep\# u\\
&+\<v_2,\p_t\om_\ep\>u+v_1\#v_2\#\om_\ep+v_1\# v_1\#\p_t\om_\ep.
\end{align*}

Let $V=\p^2_t \om_\ep$. The regularity estimates \eqref{W^{4,2}-es-om-ep} imply that
\[V\in C^0([0,T_0],L^2(\Om))\cap L^2([0,T_0],W^{1,2}(\Om)),\]
hence $V$ is a weak solution to the Neumann problem:
\begin{equation}\label{para-Nuemann-eq-p-2-t-om}
\begin{cases}
\p_t V^\top=\ep\De V+u\times \De V+V\# v_1\# u+\ep\mbox{div}(\<\p u, V\>u)\\
\quad\quad\quad\,\,+\ep(\p u\# u\# \p V+\p u\#\p u\#V)+\ep \mbox{div}F+\bar{R}_1,\\[1ex]
\frac{\p V}{\p \nu}|_{\p\Om}=0,\\[1ex]
V(0)=\p^2_t \om_\ep(0): \Om\to \Real^K.
\end{cases}
\end{equation}
Namely, for any $\psi\in W^{1,2}_{1}([0,T_0]\times \Om, \Real^K)$, the following integral equality holds:
\begin{align*}
&\int_{\Om}\<V^\top, \psi\>dx(T)-\int_{\Om}\<V^\top, \psi\>dx(0)-\int_{0}^{T}\int_{\Om}\<V^\top, \p_t \psi\>dxdt\\
=&-\ep\int_{0}^{T}\int_{\Om}\<\p V, \p \psi\>dxdt-\int_{0}^{T}\int_{\Om}\<u\times \p V, \p \psi\>dxdt\\
&-\int_{0}^{T}\int_{\Om}\<\p u\times \p V, \psi\>dxdt+\int_0^T\int_{\Om}\<V\# v_1\# u, \psi\>dxdt\\
&-\ep\int_0^T\int_{\Om}\<\<V,\p u\>u,\p \psi \>dxdt+\ep\int_{0}^{T}\int_{\Om}\<\p u\# u\# \p V+\p u\#\p u\#V, \psi\>dxdt\\
&-\ep\int_0^T\int_{\Om}\<F,\p \psi \>dxdt+\int_0^T\int_{\Om}\<\bar{R}_1,\psi \>dxdt.
\end{align*}
Here we denote the tangent part of $V$ by
\[V^\top=V-\<V,u\>u,\]
and the terms $F$ and $\bar{R}_1$ are given by
\begin{align*}
F=&\p^2_t(\<\p u, \om_\ep\>u)-\<\p u, V\>u\\
=&\p v_2\# \om_\ep\# u+\p u\# \om_\ep \# v_2\\
&+\p v_1\# \p_t \om_\ep\# u+\p u\# \p_t \om_\ep\# v_1+\p v_1\# \om_\ep\# v_1,\\
\bar{R}_1=&\<v_2,\p_t\om_\ep\>u+v_1\#v_2\#\om_\ep+v_1\# v_1\#\p_t\om_\ep\\
&+\ep(\p v_2\# \p \om_\ep\# u+\p u\# \p\om_\ep\# v_2)\\
&+\ep(\p v_1\# \p\p_t \om_\ep\# u+\p v_1\# \p\om_\ep\# u+\p u\# \p_t \om_\ep\# v_1)\\
&+\ep(\p v_2\# \p u\# \om_\ep+\p u\# \p v_1\# \p_t \om_\ep+\p v_1\#\p v_1\# \om_\ep)\\
&+v_2\#\De \om_\ep+v_1\# \De \p_t\om_\ep+\p^2_t(|\p u|^2u\times \om_\ep)\\
&(\ep I+u\times)\p^2_tR_k+v_2\# R_k+v_1\# \p_t R_k.
\end{align*}
Additionally, by estimates \eqref{es-T-k}, \eqref{es-R} and \eqref{W^{4,2}-es-om-ep}, we deduce 
\[F\in L^\infty([0,T_0], L^2(\Om))\quad\mbox{and}\quad \bar{R}_1\in L^\infty([0,T_0], L^2(\Om)).\]

Then following analogous arguments as in \cite{YZ} (see the proof of Theorem 2.2 on page 48 of \cite{YZ}), we take $\psi=V^{\top}_h$, i.e., the Steklov average of $V^\top$ defined by
\[V^{\top}_h(x,t)=\frac{1}{2h}\int_{t-h}^{t+h}V^{\top}(x,s)ds\]
for sufficiently small $h>0$, as a test function for equation \eqref{para-Nuemann-eq-p-2-t-om}. Passing to the limit as $h\to 0$ yields
\begin{equation}\label{formula-V}
\begin{aligned}
&\frac{1}{2}\(\int_{\Om}|V^\top|^2dx(T)-\int_{\Om}|V^\top|^2 dx(0)\)\\
=&-\ep\int_{0}^{T}\int_{\Om}\<\p V, \p V^{\top}\>dxdt-\int_{0}^{T}\int_{\Om}\<u\times \p V, \p V^{\top}\>dxdt\\
&-\int_{0}^{T}\int_{\Om}\<\p u\times \p V, V^{\top}\>dxdt+\int_0^T\int_{\Om}\<V\# v_1\# u, V^{\top}\>dxdt\\
&-\ep\int_0^T\int_{\Om}\<\<V,\p u\>u,\p V^{\top} \>dxdt+\ep\int_{0}^{T}\int_{\Om}\<\p u\# u\# \p V+\p u\#\p u\#V, V^{\top}\>dxdt\\
&-\ep\int_0^T\int_{\Om}\<F,\p V^{\top} \>dxdt+\int_0^T\int_{\Om}\<\bar{R}_1,V^{\top} \>dxdt\\
=&M_1+\cdots+M_8.
\end{aligned}
\end{equation}

The terms $M_1$-$M_8$ admit the following estimates. We begin by analyzing the term $M_1$:
\begin{align*}
M_1=&-\ep\int_{0}^{T}\int_{\Om}|\p V^{\top}|^2dxdt-\ep\int_{0}^{T}\int_{\Om}\<\p V^{\perp}, \p V^{\top}\>dxdt\\
\leq &-\frac{3\ep}{4}\int_{0}^{T}\int_{\Om}|\p V^{\top}|^2dxdt+C\ep\int_{0}^{T}\int_{\Om}|\p V^{\perp}|^2dxdt\\
\leq &-\frac{3\ep}{4}\int_{0}^{T}\int_{\Om}|\p V^{\top}|^2dxdt+C\ep\int_{0}^{T}(\norm{V^\top}^2_{L^2}+1)dt.
\end{align*}
where we have used the decomposition
\begin{align*}
V^{\perp}=&\<V,u\> u\\
=&-\<\om_\ep, v_2\>u-2\<\p_t\om_\ep, v_1 \>u,
\end{align*}
along with the estimate  \eqref{equiv-es-om-1} to obtain the following bounds
\begin{equation}\label{es-V^perp}
\begin{aligned}
\int_{\Om}|V^\perp|^2dx\leq &C\norm{v_2}^2_{L^2}\norm{\om_\ep}^2_{W^{2,2}}\\
&+C\norm{\p_t \om_\ep}^2_{L^2}\norm{v_1}^2_{W^{2,2}}\leq C\\
\int_{\Om}|\p V^{\perp}|^2dx\leq &C\norm{v_2}^2_{W^{1,2}}\norm{\om_\ep}^2_{W^{2,2}}+C\norm{v_1}^{2}_{W^{2,2}}\norm{\p_t\om_\ep}^2_{W^{1,2}}\\
\leq &C(\norm{V}^2_{L^2}+\norm{\p_t \om_\ep}^2_{L^2}+\norm{\om_\ep}^2_{L^2}+1)\\
\leq &C(\norm{V^\top}^2_{L^2}+1).
\end{aligned}
\end{equation}

Applying estimates \eqref{es-T-k} and \eqref{equiv-es-om-1}, we analyze the term $M_2$:
\begin{align*}
	M_2=&-\int_{0}^{T}\int_{\Om}\<u\times \p V, \p (\<u,V\>u)\>dxdt\\
	=&-\int_{0}^{T}\int_{\Om}\<u\times \p V, \<u,V\>\p u\>dxdt\\
	=&\int_{0}^{T}\int_{\Om}\<u\times \p V, (\<v_2,\om_\ep\>+2\<v_1, \p_t \om_\ep\>)\p u\>dxdt\\
	=&\int_{0}^{T}\int_{\Om}\< V, \p_i\((\<v_2,\om_\ep\>+2\<v_1, \p_t \om_\ep\>) u\times \p_i u\>\)dxdt\\
	\leq &C\int_{0}^{T}\norm{V}^2_{L^2}dt+C\int_{0}^{T}(\norm{\om_\ep}^2_{W^{2,2}}+\norm{\p_t \om_\ep}^2_{W^{1,2}})dt\\
	\leq &C\int_{0}^{T}(\norm{V^{\top}}^2_{L^2}+1)dt.
\end{align*}

We then apply Lemma \ref{alg-sphere} to obtain a bound of $M_3$:
\begin{align*}
|M_3|=&|\int_{0}^{T}\int_{\Om}\<\p u\times (\p V)^{\perp}, V^{\top}\>dxdt|\\
\leq &C\int_{0}^{T}\norm{V^\top}^2_{L^2}dt+C\int_{0}^{T}(\norm{\p V^\perp}^2_{L^2}+\norm{V}^2_{L^2})dt\\
\leq &C\int_{0}^{T}(\norm{V^{\top}}^2_{L^2}+1)dt,
\end{align*}
where we used the following formula:
\[(\p V)^\perp=\p V^\perp+V\# \p u\# u\]
and estimates \eqref{es-V^perp}.

The rest terms $M_4$-$M_8$ are estimated in the following.
\begin{align*}
|M_4|\leq &C\int_0^T\norm{v_1}_{W^{2,2}}\norm{V}^2_{L^2}dt\leq C\int_{0}^{T}(\norm{V^{\top}}^2_{L^2}+1)dt
\end{align*}
\begin{align*}
|M_5+M_6|\leq &C\ep\int_{0}^{T}(\norm{V^\top}^2_{L^2}+1)dt+\frac{\ep}{4}\int_{0}^{T}\norm{\p V^{\top}}^2_{L^2}dt,
\end{align*}
\begin{align*}
|M_7|\leq &C\ep\int_{0}^{T}\norm{F}^2_{L^2}dt+\frac{\ep}{4}\int_{0}^{T}\norm{\p V^{\top}}^2_{L^2}dt\\
\leq &C\ep\int_{0}^{T}(\norm{\om_\ep}^2_{W^{2,2}}+\norm{\p_t \om_\ep}^2_{L^2})dt+\frac{\ep}{4}\int_{0}^{T}\norm{\p V^{\top}}^2_{L^2}dt,
\end{align*}
and
\begin{align*}
|M_8|\leq &C\int_{0}^{T}\norm{\bar{R}_1-\<v_2,\p_t\om_\ep\>u}^2_{L^2}dt+C\int_{0}^{T}\norm{V^\top}^2_{L^2}dt\\
\leq &C\int_{0}^{T}(\norm{\om_\ep}^2_{W^{4,2}}+\norm{\p_t \om_\ep}^2_{W^{2,2}}+\sum_{i=1}^2\norm{\p^i_t R_k}^2_{W^{4-2i,2}})dt+C\int_{0}^{T}\norm{V^\top}^2_{L^2}dt\\
\leq &C\int_{0}^{T}(\norm{V^{\top}}^2_{L^2}+1)dt,
\end{align*}
where we have used the estimates \eqref{es-T-k}, \eqref{es-R1},\eqref{W^{2,2}-bound-om} and \eqref{es-V^perp}.

Therefore, substituting the above estimates for $M_1$-$M_7$ into \eqref{formula-V}, we obtain
\begin{align*}
\(\int_{\Om}|V^\top|^2dx(T)-\int_{\Om}|V^\top|^2 dx(0)\)\leq C	\int_{0}^{T}\int_{\Om}|V^{\top}|^2dx+1)dt
\end{align*}
for all $0\leq T\leq T_0$. Then Gronwall's inequality implies that
\[\sup_{0\leq t\leq T_0}\norm{V^\top}^2_{L^2}\leq C(T_0, \norm{V(0)}^2_{L^2})\leq C(T_0, \norm{u_0}_{W^{2k+4,2}}).\]
By estimates \eqref{equiv-es-om-3} and \eqref{es-V^perp}, we deduce that
\begin{align*}
\sup_{0\leq t\leq T_0}\norm{\om_\ep}^2_{W^{4,2}}+\norm{\p_t \om_\ep}^2_{W^{2,2}}+\norm{\p_t^2\om_\ep}^2_{L^2}\leq C.
\end{align*}

We now summarize our main estimates for $\om_\ep$ in the following theorem
\begin{thm}\label{unif-W^{4,2}-es-om}
Assume that $u_0\in W^{2(k+1)+3,2}(\Om)$ satisfies the $(k+1)$-order compatibility conditions \eqref{com-cond-1}, and $u$ is a regular solution to \eqref{eq-LL} satisfying the property $\T_k$ (i.e., estimates \eqref{es-case-k}). Let $\om_\ep$ be given in Theorem \eqref{W^{4,2}-solu-om-ep}. Then there exists a constant $C$ depending only on $T_0$ and $\norm{u_0}_{W^{2k+4}}$ such that
\begin{align}
	\sup_{0\leq t\leq T_0}\norm{\om_\ep}^2_{W^{4,2}}+\norm{\p_t \om_\ep}^2_{W^{2,2}}+\norm{\p_t^2\om_\ep}^2_{L^2}\leq C.\label{W^{4,2}-bound-om}
\end{align}
\end{thm}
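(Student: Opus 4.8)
The plan is to obtain the bound \eqref{W^{4,2}-bound-om} by a three-stage energy bootstrap in which, at each stage, the equivalent Sobolev-norm estimates of Lemma \ref{equiv-es-om-ep} convert control of time derivatives of $\om_\ep$ into control of spatial derivatives, while the geometric identities for $(\U^2,u\times)$ recorded in Lemma \ref{alg-sphere} (orthogonality, vanishing of triple products, skew-symmetry of the cross product) are used to eliminate the dangerous highest-order cross-product terms. By \eqref{equiv-es-om-3} it suffices to establish $\ep$-independent bounds on $[0,T_0]$ for $\norm{\om_\ep}_{L^2}$, $\norm{\p_t\om_\ep}_{L^2}$ and $\norm{\p_t^2\om_\ep}_{L^2}$; the $W^{4,2}$- and $W^{2,2}$-control in \eqref{W^{4,2}-bound-om} then follows automatically. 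The standing hypotheses supply the inputs: the $\T_k$-estimates \eqref{es-T-k}--\eqref{es-T-k1}, the bounds \eqref{es-R1} for $R_k$, the regularity \eqref{W^{4,2}-es-om-ep} of $\om_\ep$ from Theorem \ref{W^{4,2}-solu-om-ep}, and the fact (Proposition \ref{tangent-om-ep}) that $\om_\ep$ is a section of $u^*(T\U^2)$, so that $\<\om_\ep,u\>=0$ and $\<\p_t\om_\ep,u\>=-\<\om_\ep,v_1\>$ on $\bar\Om\times[0,T_0]$.

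First I would test the extrinsic equation \eqref{para-Nuemann-eq-v_1} against $\om_\ep$: the Neumann condition kills the boundary contribution, $\ep\De\om_\ep$ yields the good sign $-\ep\int_\Om|\p\om_\ep|^2$, and the genuinely first-order piece $\<\p_iu\times\p_i\om_\ep,\om_\ep\>$ is reduced to a zeroth-order quantity by splitting $\p_i\om_\ep$ into its tangential and normal parts and applying Lemma \ref{alg-sphere}; the remaining terms are bounded via \eqref{es-T-k} and \eqref{es-R1}, and Gronwall gives a uniform $L^2$-bound. Next I would differentiate \eqref{para-Nuemann-eq-v_1} in $t$ to obtain \eqref{eq-p_t-om}, note via Lemma \ref{comp-cond} that $\tfrac{\p}{\p\nu}\p_t\om_\ep|_{\p\Om}=0$ so $\p_t\om_\ep$ is an admissible test function, and test \eqref{eq-p_t-om} against $\p_t\om_\ep$. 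The dissipative term reappears with the good sign; the dangerous second-order term $\int_\Om\<u\times\De\p_t\om_\ep,\p_t\om_\ep\>$ is integrated by parts, rewritten using $\<\p_t\om_\ep,u\>=-\<\om_\ep,v_1\>$ and Lemma \ref{alg-sphere}, and thereby reduced to lower-order terms controlled by $\norm{\p_t\om_\ep}_{L^2}^2+\norm{\om_\ep}_{L^2}^2$ after invoking \eqref{equiv-es-om-1}; the inhomogeneity $\bar R$ of \eqref{formula-bar-R} is estimated in $L^2$ by the same bounds together with \eqref{equiv-es-om-1}. Combined with \eqref{equiv-es-om-1}, the explicit formula for $\p_t\om_\ep(0)$ and the compatibility relations \eqref{comp-v-k} (used to bound the initial datum by $\norm{u_0}_{W^{2k+2,2}}$), Gronwall then yields \eqref{W^{2,2}-bound-om}.

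The substantive step is the uniform $L^2$-bound for $V:=\p_t^2\om_\ep$. Differentiating \eqref{para-Nuemann-eq-v_1} twice and writing the result in the weak Neumann form \eqref{para-Nuemann-eq-p-2-t-om}, one faces two issues: $V$ lies only in $C^0([0,T_0],L^2)\cap L^2([0,T_0],W^{1,2})$, so it cannot be plugged in directly as a test function, and the principal term $u\times\De V$ must be treated without a derivative loss. I would resolve the first by testing against the Steklov average $V_h^\top$ of the tangential part $V^\top=V-\<V,u\>u$ and letting $h\to0$, following \cite{YZ}, which produces the energy identity \eqref{formula-V} with eight terms $M_1,\dots,M_8$. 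The second is handled by three observations: only $V^\top$ appears on the left-hand side; the normal part $V^\perp=-\<\om_\ep,v_2\>u-2\<\p_t\om_\ep,v_1\>u$ is controlled in $W^{1,2}$ by the already-established $W^{2,2}$-bounds, which is precisely \eqref{es-V^perp}; and in $M_2$ and $M_3$ the top-order cross terms either vanish by Lemma \ref{alg-sphere} or are moved off $\De V$ by integration by parts. The $\ep$-terms $M_1,M_5,M_6,M_7$ contribute either the dissipative $-\tfrac{3\ep}{4}\int_\Om|\p V^\top|^2$ or small absorbable pieces of the form $\tfrac{\ep}{4}\int_\Om|\p V^\top|^2$, while $M_8$ is dominated by $\norm{\om_\ep}_{W^{4,2}}^2+\norm{\p_t\om_\ep}_{W^{2,2}}^2+\sum_{i=1}^2\norm{\p_t^iR_k}_{W^{4-2i,2}}^2$, hence by $\norm{V^\top}_{L^2}^2+1$ through \eqref{equiv-es-om-3} and \eqref{W^{2,2}-bound-om}. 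A final Gronwall argument, with the initial bound $\norm{V(0)}_{L^2}\le C(\norm{u_0}_{W^{2k+4,2}})$ read off from the twice-differentiated equation and \eqref{comp-v-k}, gives $\sup_{[0,T_0]}\norm{V^\top}_{L^2}^2\le C$; then \eqref{equiv-es-om-3} together with \eqref{es-V^perp} upgrades this to \eqref{W^{4,2}-bound-om}.

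I expect the main obstacle to be exactly this last step: arranging the energy identity for $\p_t^2\om_\ep$ so that \emph{every} term is controlled \emph{uniformly in} $\ep$. The delicate points are (i) the restricted class of admissible test functions imposed by the Neumann boundary, which forces the Steklov-average regularization and a careful verification that all boundary integrals drop out; (ii) the algebraic cancellations among the highest-order cross products, which hold only because of the orthogonality, triple-product vanishing and skew-symmetry of the cross product on $\U^2$ together with the integrability $\n J(u)=0$; and (iii) ensuring that every parabolic-regularization term carrying a factor $\ep$ is either dissipative or has a small enough coefficient to be absorbed into the left-hand side, rather than merely bounded by an $\ep$-dependent constant. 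Once these are in place, the three Gronwall arguments chain together to give \eqref{W^{4,2}-bound-om}.
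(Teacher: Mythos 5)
Your proposal is correct and follows essentially the same route as the paper: the three-stage Gronwall bootstrap ($L^2$, then $W^{2,2}$ via the equation for $\p_t\om_\ep$ and Lemma \ref{comp-cond}, then the $L^2$ bound on $V=\p_t^2\om_\ep$ via the Steklov-averaged tangential test function and the decomposition $V=V^\top+V^\perp$), with the equivalent norm estimates \eqref{equiv-es-om-1}--\eqref{equiv-es-om-3} and the geometric cancellations of Lemma \ref{alg-sphere} playing exactly the roles you assign them. The term-by-term treatment of $M_1,\dots,M_8$, the use of \eqref{es-V^perp}, and the initial-data bounds via \eqref{comp-v-k} all match the paper's argument.
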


\medskip
\subsection{$W^{5,2}$-regularity of $\om_\ep$.}\

Recall that $\om_\ep$ is a solution to the parabolic approximation problem \eqref{para-Nuemann-eq-v_1} and satisfies the uniform bound \eqref{W^{4,2}-bound-om}. Applying the compactness lemma \eqref{A-S}, there exists a subsequence of $\{\om_\ep\}$ converging to a limiting map $\om$, which satisfies the energy bound
\[\sup_{0\leq t\leq T_0}\norm{\om}^2_{W^{4,2}}+\norm{\p_t \om}^2_{W^{2,2}}+\norm{\p_t^2\om}^2_{L^2}\leq C,\]
and solves the following Neumann problem:
\begin{equation}\label{Nuemann-eq-v_1}
	\begin{cases}
		\p_t \om+\<\om, v_1\>u=u\times (\De \om+|\p u|^2\om+R_k),\\[1ex]
		\frac{\p \om}{\p \nu}|_{\p\Om}=0,\\[1ex]
		\om_\ep(0)=v_k(0): \Om\to \Real^K.
	\end{cases}
\end{equation}

By property $\T_k$, we notice that $v_k\in L^\infty([0,T_0], W^{3,2}(\Om))$ with $\p_t v_k\in L^\infty([0,T_0], W^{1,2}(\Om))$ is another regular solution to \eqref{Nuemann-eq-v_1}. Defining $\beta=\om-v_k\in u^*T\U^2$, we apply Lemma \ref{alg-sphere} to show
\begin{align*}
\frac{1}{2}\p_t\int_{\Om}|\beta|^2dx=&\int_{\Om}\<u\times \De \beta, \beta \>dx\\
= &-\int_{\Om}\<\p u\times \p \beta, \beta\>dx\\
=&-\int_{\Om}\<\p u\times u \<u,\p \beta\>, \beta\>dx\\
=&\int_{\Om}\<\p u,\beta\>\<\p u\times u, \beta\>dx\\
\leq &C\norm{u}^2_{W^{3,2}}\int_{\Om}|\beta|^2dx.
\end{align*}
Since $\beta(0)\equiv 0$, Gronwall's inequality implies that $v_k=\om$. Consequently, for $i=0, 1, 2$, there holds true
\[\p^i_tv_k\in L^\infty([0,T_0], W^{4-2i,2}(\Om)).\]

By Lemma \ref{equiv-es-p-t-u}, this further implies that
\begin{align}
v_{k+i}\in L^\infty([0,T_0],W^{4-2i,2}(\Om)),\label{es-v-k-new}
\end{align}
for $0 \leq i\leq2$.

In particular, from the equation
\[\De u=-u\times v_1-|\p u|^2 u,\]
we derive from the special case $k=1$ in \eqref{es-v-k-new} that
\begin{align}
v_i=\n^i_t u\in L^\infty([0,T_0],W^{6-2i}(\Om))\label{es-u-new}
\end{align}
for $0\leq i\leq 3$.

These improved estimates \eqref{es-v-k-new} and \eqref{es-u-new} lead to higher regularity for $\om_\ep$. Precisely, we have the following result.

\begin{prop}\label{W^{5,2}-solu-om-ep}
Under the same assumptions as in Theorem \ref{W^{4,2}-solu-om-ep}, the solution $\om_\ep$ satisfies
\begin{align}
\p^i_t\om_\ep\in C^0([0,T_0], W^{5-2i}(\Om))\cap L^2([0,T_0], W^{6-2i}(\Om))\label{W^{5,2}-es-om-ep}	
\end{align}
for $0\leq i\leq 2$.
\end{prop}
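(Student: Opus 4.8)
\emph{Plan of proof.} The idea is to run the same three-step bootstrap that established Theorem~\ref{W^{4,2}-solu-om-ep}, but with every Sobolev exponent raised by one, the extra room being supplied by the improved regularity of $u$, $v_k$ and their time derivatives recorded in \eqref{es-v-k-new}--\eqref{es-u-new}. The preliminary input is a refinement of Lemma~\ref{es-R}: since \eqref{es-u-new} places $v_0,v_1,v_2,v_3$ one degree higher than before while property $\T_k$ already forces $v_j\in W^{5,2}(\Om)$ for $0\leq j\leq k-1$, re-running the counting in the proof of Lemma~\ref{es-R} with the algebra Lemma~\ref{alg} should give
\[\p_t^iR_k\in L^\infty([0,T_0],W^{5-2i,2}(\Om)),\qquad i=0,1,2,\]
one degree sharper than \eqref{es-R1}. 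The point that needs care here is that $\p_t^2R_k$ now contains the factors $v_{k+1}$ and $\p v_{k+1}$, which by \eqref{es-v-k-new} lie only in $W^{2,2}(\Om)$ and $W^{1,2}(\Om)$; this is precisely why the bound drops to $W^{1,2}(\Om)$ at $i=2$ and no further.

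\emph{Step 1: regularity of the time derivatives.} I would return to the Neumann problem \eqref{para-Nuemann-eq-p-t-om} (equivalently \eqref{eq-p_t-om}) for $\p_t\om_\ep$. Using the refined bound above, the improved regularity of the coefficients $u,\p u,\De u,v_1$ (now controlled in $W^{\geq 4,2}(\Om)$ via \eqref{es-u-new}), and the regularity $\om_\ep\in C^0([0,T_0],W^{4,2})\cap L^2([0,T_0],W^{5,2})$, $\p_t\om_\ep\in C^0([0,T_0],W^{2,2})\cap L^2([0,T_0],W^{3,2})$ already supplied by Theorem~\ref{W^{4,2}-solu-om-ep}, a term-by-term application of Lemma~\ref{alg} to the remainder $\bar R$ of \eqref{formula-bar-R} should yield
\[\bar R\in C^0([0,T_0],W^{1,2}(\Om))\cap L^\infty([0,T_0],W^{2,2}(\Om)),\]
one degree better than in Theorem~\ref{W^{4,2}-solu-om-ep} (the limiting contribution being $\ep\<\De v_1,\om_\ep\>u$). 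The $(k+1)$-order compatibility conditions give $v_k(0)\in W^{5,2}(\Om)$ and $v_{k+1}(0)\in W^{3,2}(\Om)$, hence $\p_t\om_\ep(0)\in W^{3,2}(\Om)$, and the boundary identity \eqref{comp-v-k} shows $\frac{\p}{\p\nu}\p_t\om_\ep(0)|_{\p\Om}=0$. Feeding these data into the parabolic existence-and-regularity theorem of Appendix~\ref{s: parabolic-eq} (Theorem~\ref{thm-h}) and invoking uniqueness, exactly as in Step~2 of the proof of Theorem~\ref{W^{4,2}-solu-om-ep}, should produce
\begin{align*}
\p_t\om_\ep&\in C^0([0,T_0],W^{3,2}(\Om))\cap L^2([0,T_0],W^{4,2}(\Om)),\\
\p_t^2\om_\ep&\in C^0([0,T_0],W^{1,2}(\Om))\cap L^2([0,T_0],W^{2,2}(\Om)),
\end{align*}
together with $\p_t^3\om_\ep\in L^2([0,T_0],L^2(\Om))$; here the $C^0$-in-time statements follow from the $L^2$-in-time ones via the embedding Lemma~\ref{C^0-em}, so no compatibility beyond the one on $\p_t\om_\ep(0)$ is needed.

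\emph{Step 2: spatial regularity of $\om_\ep$.} Finally I would use the elliptic identity for $\om_\ep$ derived in Step~3 of the proof of Theorem~\ref{W^{4,2}-solu-om-ep}, namely that $\ep\De\om_\ep+u\times\De\om_\ep$ equals $\p_t\om_\ep$ plus lower-order terms built from $u$, $\om_\ep$ and $R_k$. By the refined bound on $R_k$, the estimates \eqref{es-u-new}, the regularity of $\om_\ep$ and $\p_t\om_\ep$ just obtained, and Lemma~\ref{alg}, the right-hand side lies in $C^0([0,T_0],W^{3,2})\cap L^2([0,T_0],W^{4,2})$. For each fixed $\ep>0$ the matrix $\ep I+u\times$ is invertible on $\Real^3$ (its eigenvalues are $\ep$ and $\ep\pm i$) with inverse as regular as $u$, so $\De\om_\ep$ lies in the same space; Lemma~\ref{eq-norm} (legitimate since $\frac{\p\om_\ep}{\p\nu}|_{\p\Om}=0$) then upgrades this to $\om_\ep\in C^0([0,T_0],W^{5,2}(\Om))\cap L^2([0,T_0],W^{6,2}(\Om))$. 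Combined with Step~1 this is exactly \eqref{W^{5,2}-es-om-ep}.

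\emph{The main obstacle.} There is no new analytic phenomenon; the work is bookkeeping. The genuinely new estimate is the refined control $\p_t^iR_k\in L^\infty([0,T_0],W^{5-2i,2})$, which forces one to identify exactly which intermediate derivatives $v_j$, $\p v_j$ survive in $R_k$ and in its first two time derivatives and to balance the bounds of property $\T_k$ against the fresh ones \eqref{es-v-k-new}--\eqref{es-u-new}; after that, the remaining task is to verify at every product that the hypothesis $n_2\geq 2$ of Lemma~\ref{alg} is met and that the sharpened source $\bar R$ and the initial datum $\p_t\om_\ep(0)$ fit the hypotheses of the appendix parabolic theorem.
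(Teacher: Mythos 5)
Your proposal is correct and follows essentially the same route as the paper: one upgrades the source term $\bar R$ in the equation \eqref{para-Nuemann-eq-p-t-om} for $\p_t\om_\ep$ to $L^\infty([0,T_0],W^{2,2})$ with $\p_t\bar R\in L^\infty([0,T_0],L^2)$ using \eqref{es-T-k} and \eqref{es-u-new}, applies the appendix parabolic theorem (the enhanced version, Theorem \ref{thm-h1}) to get the stated regularity of $\p_t\om_\ep$ and $\p_t^2\om_\ep$, and then recovers $\om_\ep\in C^0([0,T_0],W^{5,2})\cap L^2([0,T_0],W^{6,2})$ from the elliptic identity \eqref{eq-De-om-ep}. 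Your preliminary refinement of Lemma \ref{es-R} is the same computation the paper records as \eqref{es-R-impr}, though the paper only invokes it later for the uniform $W^{5,2}$ energy estimates.
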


\begin{proof}
Using estimates \eqref{es-T-k} and \eqref{es-u-new}, we can establish the following regularity properties for the remainder term $\bar{R}$ in equation \eqref{para-Nuemann-eq-p-t-om}:
\begin{align*}
\bar{R}&\in L^\infty([0,T_0], W^{2,2}(\Om)),\\
\p_t\bar{R}&\in L^\infty([0,T_0], L^2(\Om)).
\end{align*}

Now, applying Theorem \ref{thm-h} with the choices:
$$f_1=2\p u\otimes u,\quad f_2=\De u\otimes u+|\p u|^2I \quad\mbox{and}\quad f_3=\bar{R},$$
we obtain that $\p_t \om_\ep$ satisfies
\[\p^i_t\om_\ep\in C^0([0,T_0], W^{5-2i,2}(\Om))\cap L^2([0,T_0], W^{6-2i,2}(\Om))\]
for $i=1,2$. By employing the formula \eqref{eq-De-om-ep}, i.e.,
\begin{align*}
\De \om_\ep=&\frac{1}{1+\ep^2}(\ep \p_t \om_\ep-u\times \p_t\om_\ep)+\ep\om_\ep\#v_1\#u\\
&+\p u\#\p \om_\ep\# u+\De u\# \om_\ep\# u+\p u\#\p u\# \om_\ep+R_k,
\end{align*}
this estimate yields the enhanced spatial regularity of $\om_\ep$:
\[\om_\ep\in C^0([0,T_0],W^{5,2}(\Om))\cap L^2([0,T_0],W^{6,2}(\Om).\]
\end{proof}

A direct conclusion of Proposition \ref{W^{5,2}-solu-om-ep} is that
\[V=\p^2_t \om_\ep\in C^0([0,T_0],W^{1,2}(\Om))\cap L^2([0,T_0],W^{2,2}(\Om)),\]
hence $V$ is a strong solution to the Neumann problem \eqref{para-Nuemann-eq-p-2-t-om}:
\begin{equation}\label{para-Nuemann-eq-p-2-t-om'}
\begin{cases}
\p_t V^\top=\ep\De V+u\times \De V+V\# v_1\# u+\ep\mbox{div}(\<\p u, V\>u)\\
\quad\quad\quad\,\,+\ep(\p u\# u\# \p V+\p u\#\p u\#V)+\ep \mbox{div}F+\bar{R}_1,\\[1ex]
\frac{\p V}{\p \nu}|_{\p\Om}=0,\\[1ex]
V(0)=\p^2_t \om_\ep(0): \Om\to \Real^K.
\end{cases}
\end{equation}

\section{The proof of the main theorem \ref{main-thm}}\label{s: proof-main-thm}\

In this section, we derive uniform $W^{5,2}$-energy estimates for the family of solutions $\omega_\varepsilon$. Building upon these a priori estimates, we then complete the proof of property $\T_{k+1}$. Finally, we prove the main theorem \ref{main-thm}.

\subsection{Uniform $W^{5,2}$-energy estimates for $\om_\ep$}\label{ss: uniform-en-es-2}\

To establish uniform $W^{5,2}$-energy estimates for $\om_\ep$, we first derive  refined estimates for $R_k$ and then prove equivalent bounds for $W^{5,2}$-norms of $\om_\ep$.
\begin{lem}
Assume that the solution $u$ satisfies the estimates \eqref{es-T-k} and \eqref{es-v-k-new} with $k\geq 1$, then we have
\begin{align}
\p_tR^i_k\in L^\infty ([0,T_0], W^{5-2i,2}(\Om))\label{es-R-impr}
\end{align}
for $0\leq i\leq 2$.
\end{lem}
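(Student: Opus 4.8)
The plan is to re-run the argument of Lemma~\ref{es-R} with one extra derivative throughout, the new input being the sharpened bounds \eqref{es-v-k-new} (the case $k=1$ being trivial since $R_1\equiv 0$, so assume $k\ge 2$). The point is that \eqref{es-v-k-new} gains a derivative on $v_k$ and $v_{k+1}$, and this gain must first be pushed \emph{downward} onto $v_{k-1}$, which is where the weakest Sobolev exponents in $R_k$ occur. To do this I would read \eqref{eq-v_k} at index $k-1$, i.e. $\n_t v_{k-1}=u\times\bigl(-\n^*\n v_{k-1}+R^{\mathbb{S}^2}(v_{k-1},\n_j u)\n_j u+R_{k-1}\bigr)$, apply $u\times$ and use $\n_t v_{k-1}=v_k$ to solve for the Laplacian:
\[\De v_{k-1}=-u\times v_k+(\text{curvature and lower-order terms in }u,\n u,\De u,v_{k-1})+R_{k-1}.\]
Here $v_k\in L^\infty([0,T_0],W^{4,2}(\Om))$ by \eqref{es-v-k-new}, $R_{k-1}\in L^\infty([0,T_0],W^{4,2}(\Om))$ by Lemma~\ref{es-R} applied at $k-1$ (it vanishes when $k=2$), and the remaining terms lie in $L^\infty([0,T_0],W^{4,2}(\Om))$ by \eqref{es-T-k}; since $\frac{\p v_{k-1}}{\p\nu}=0$ on $\p\Om\times[0,T_0]$ (propagated from $\frac{\p u}{\p\nu}=0$ on the cylinder by differentiating the flow equation in $t$, cf. Lemma~\ref{comp-cond}), Lemma~\ref{eq-norm} upgrades this to $v_{k-1}\in L^\infty([0,T_0],W^{6,2}(\Om))$. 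Combined with \eqref{es-T-k}, which already gives $v_i\in L^\infty([0,T_0],W^{6,2}(\Om))$ for $i\le k-2$, we obtain $v_i\in L^\infty([0,T_0],W^{6,2}(\Om))$, hence $\n v_i\in L^\infty([0,T_0],W^{5,2}(\Om))$, for all $0\le i\le k-1$.

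With this bootstrap in place the case $i=0$ of \eqref{es-R-impr} is immediate: decomposing $R_k=F^1_k+F^2_k$ as in Lemma~\ref{es-R}, each monomial of $F^1_k$ is a $\#$-contraction of factors $v_{a_l}$ ($a_l\le k-1$, so in $W^{6,2}$) with two gradient factors $\n v_i,\n v_j$ ($i,j\le k-1$, so in $W^{5,2}$), and since $m\le 3$ Lemma~\ref{alg} puts the product in $W^{5,2}$; likewise each monomial of $F^2_k$ is a contraction of $v_{b_1},u\times v_{b_2},v_{b_3}$ with $b_l\le k-1$ (so in $W^{5,2}$), again landing in $W^{5,2}$. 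Hence $R_k\in L^\infty([0,T_0],W^{5,2}(\Om))$.

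For $i=1,2$ I would expand $\p_t R_k$ and $\p_t^2 R_k$ exactly as in Lemma~\ref{es-R}: these are finite sums of monomials of the same type but with index-sum $k+1$ (resp. $k+2$) and all time indices $\le k$ (resp. $\le k+1$). The structural fact, inherited verbatim from Lemma~\ref{es-R}, is that in any such monomial at most one factor carries a ``top'' time index, and for that factor the needed regularity is precisely what \eqref{es-v-k-new} supplies---$v_k\in W^{4,2}$, $v_{k+1}\in W^{2,2}$, hence $\n v_k\in W^{3,2}$, $\n v_{k+1}\in W^{1,2}$---while every other factor is far more regular by \eqref{es-T-k} and the bootstrap. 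Running the resulting finite case check through Lemma~\ref{alg} then yields $\p_t R_k\in L^\infty([0,T_0],W^{3,2}(\Om))$ and $\p_t^2 R_k\in L^\infty([0,T_0],W^{1,2}(\Om))$, which completes \eqref{es-R-impr}.

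The one genuinely delicate point is the elliptic bootstrap of the first paragraph: one must extract the Poisson equation for $v_{k-1}$ with an honest accounting of the curvature corrections hidden in $-\n^*\n v_{k-1}+R^{\mathbb{S}^2}(v_{k-1},\n_j u)\n_j u$, and---more importantly---justify the Neumann condition for $v_{k-1}$ on the whole space-time cylinder (not just at $t=0$, where it is the compatibility condition \eqref{comp-v-k}), since that is exactly the hypothesis of Lemma~\ref{eq-norm}. Everything after that is the same bookkeeping of multiplicative Sobolev estimates as in Lemma~\ref{es-R}, shifted up by one derivative.
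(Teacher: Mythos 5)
Your proposal is correct, and for the differentiated cases $i=1,2$ it is essentially the paper's own argument: decompose $R_k=F^1_k+F^2_k$, write $\p_tF^1_k$ and $\p^2_tF^1_k$ as sums of monomials with time-index sum $k+1$, resp.\ $k+2$, feed in $v_j\in W^{5,2}$ for $j\le k-1$ from \eqref{es-T-k} together with the improved bounds $v_k\in W^{4,2}$, $v_{k+1}\in W^{2,2}$ (hence $\p v_k\in W^{3,2}$, $\p v_{k+1}\in W^{1,2}$) from \eqref{es-v-k-new}, and close with Lemma \ref{alg}. The genuine difference is your treatment of the undifferentiated case $i=0$: the paper's proof never addresses it (its final conclusion is stated only ``for $i=1,2$''), and indeed $R_k\in W^{5,2}$ does \emph{not} follow from \eqref{es-T-k} and Lemma \ref{alg} alone, since those give only $\p v_{k-1}\in W^{4,2}$ and hence $R_k\in W^{4,2}$. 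Your elliptic bootstrap --- reading \eqref{eq-v_k} at level $k-1$ as a Poisson equation $\De v_{k-1}=-u\times v_k+\cdots$ whose right-hand side lies in $W^{4,2}$ thanks to the improved bound $v_k\in W^{4,2}$, then invoking Lemma \ref{eq-norm} to get $v_{k-1}\in W^{6,2}$ --- supplies exactly the missing ingredient, after which Lemma \ref{alg} yields $R_k\in W^{5,2}$. The one point you rightly flag, the Neumann condition $\frac{\p v_{k-1}}{\p\nu}|_{\p\Om\times[0,T_0]}=0$ needed for Lemma \ref{eq-norm}, does propagate from $\frac{\p u}{\p\nu}=0$ by iterating Lemma \ref{comp-cond} (the regularity furnished by $\T_k$ suffices for $k-1$ iterations), so the bootstrap is sound; note also that it is not needed for $i=1,2$, where the paper's direct bookkeeping already closes. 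Your ``at most one top index per monomial'' heuristic is not literally true (e.g.\ $\p v_2\#\p v_2$ occurs in $\p_t^2F^1_2$), but every such exceptional monomial still lands in the required space under Lemma \ref{alg}, so this is a harmless imprecision rather than a gap.
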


\begin{proof}
For simplicity, we decompose $R_k$ as
\[R_k=F_k^1+F_k^2,\]
where
\begin{align*}
F^1_k=&\sum_{\substack{a_1+\cdots, a_s+i+j=k,\\ 0\leq a_l,i,j\leq k-1}}v_{a_1}\#\cdots\#v_{a_s}\# \p v_i\#\p v_j,\\
F^2_k=&\sum_{\substack{b_1+b_2+b_3=k+1,\\ 1\leq b_l\leq k-1}}v_{b_1}\#v_{b_2}\#v_{b_3}.
\end{align*}
	
We focus our analysis on establishing estimates for $F_k^1$, as the corresponding bounds for $F^2_k$ can be derived using analogous arguments.
	
Now we consider the first and second time derivatives of $\p_t F_k^1$ and $\p^2_t F_k^1$. Through direct computation, we obtain
\begin{align*}
\p_tF^1_k=&\sum_{\substack{a_1+\cdots, a_s+i+j=k+1,\\ 0\leq a_l,i,j\leq k}}v_{a_1}\#\cdots\#v_{a_s}\# \p v_i\#\p v_j,\\
\p^2_tF^1_k=&\sum_{\substack{a_1+\cdots, a_s+i+j=k+2,\\ 0\leq a_l,i,j\leq k+1}}v_{a_1}\#\cdots\#v_{a_s}\# \p v_i\#\p v_j.
\end{align*}
	
Without loss of generality, we assume that $k\geq 2$, since $R_1=0$. We derive from the estimates \eqref{es-T-k} that
\[v_j\in L^\infty([0,T_0], W^{5,2}(\Om))\]
for $0\leq j\leq k-1$. Furthermore, from the bound \eqref{es-v-k-new}, we have
\[v_{k+i}\in L^\infty([0,T_0], W^{4-2i}(\Om))\]
for $0\leq i\leq 2$, which yields
\begin{align*}
v_k\in &L^\infty([0,T_0], W^{4,2}(\Om)), \,\,v_{k+1}\in L^\infty([0,T_0], W^{2,2}(\Om)),\\
\p v_k\in &L^\infty([0,T_0], W^{3,2}(\Om)),\,\,\p v_{k+1}\in L^\infty([0,T_0], W^{1,2}(\Om))
\end{align*}
	
Applying Lemma \ref{alg}, we then obtain
\[\p_t^iF_k^1\in L^\infty ([0,T_0], W^{5-2i,2}(\Om))\]
for $i=1,2$.
\end{proof}

\begin{lem}\label{equiv-W^{5,2}-es-om}
There exists a constant $C$ independent of $\ep$ such that the solution $\om_\ep$ for the problem \eqref{para-Nuemann-eq-v_1} obtained in Theorem \ref{W^{5,2}-solu-om-ep} satisfies
\begin{equation}\label{equiv-es-om-4}
\begin{aligned}
	\norm{\om_\ep}^2_{W^{5,2}}+\norm{\p_t \om_\ep}^2_{W^{3,2}}\leq &C(\norm{V}^2_{W^{1,2}}+\norm{\p_t \om_\ep}^2_{L^2}+\norm{\om_\ep}^2_{L^2}+1)\\
	\leq &C(\norm{\p V^{\top}}^2_{L^2}+1).
\end{aligned}
\end{equation}
\end{lem}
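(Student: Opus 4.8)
The plan is to mimic the scheme of Lemma \ref{equiv-es-om-ep}, raised by one order of differentiation, and to feed in the already upgraded, $\ep$-independent regularity of $u$, $v_j$ and $R_k$ recorded in \eqref{es-u-new}, \eqref{es-v-k-new} and \eqref{es-R-impr}, together with the uniform bound \eqref{W^{4,2}-bound-om}. By Proposition \ref{W^{5,2}-solu-om-ep} the norms $\norm{\om_\ep}_{W^{5,2}}$, $\norm{\p_t\om_\ep}_{W^{3,2}}$ and $\norm{V}_{W^{1,2}}$ are finite and continuous in $t$, so \eqref{equiv-es-om-4} is a pointwise-in-$t$ inequality and all manipulations below are legitimate. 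Two tools are used repeatedly: Lemma \ref{eq-norm}, which under the Neumann conditions $\frac{\p}{\p\nu}\om_\ep|_{\p\Om}=0$ and $\frac{\p}{\p\nu}\p_t\om_\ep|_{\p\Om}=0$ (the latter from Lemma \ref{comp-cond}) gives $\norm{\om_\ep}_{W^{5,2}}\sim\norm{\om_\ep}_{L^2}+\norm{\De\om_\ep}_{W^{3,2}}$ and $\norm{\p_t\om_\ep}_{W^{3,2}}\sim\norm{\p_t\om_\ep}_{L^2}+\norm{\De\p_t\om_\ep}_{W^{1,2}}$; and the pointwise elliptic identities \eqref{eq-De-om-ep} for $\De\om_\ep$ (in terms of $\p_t\om_\ep$ plus lower-order $\#$-products) and \eqref{eq-Dep_t-om} for $\De\p_t\om_\ep$ (in terms of $V=\p_t^2\om_\ep$ plus lower-order products). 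Since the prefactor $\tfrac{1}{1+\ep^2}$ and the stray $\ep$'s are all $\leq 1$, every constant produced is independent of $\ep$.

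\textbf{Step 1: control of $\p_t\om_\ep$ in $W^{3,2}$.} Differentiating \eqref{eq-Dep_t-om} once in $x$, the designated leading term is $\tfrac{1}{1+\ep^2}(\ep\,\p V-\p(u\times V))$, whose $L^2$-norm is $\leq C(\norm{\p V}_{L^2}+\norm{V}_{L^2})\leq C(\norm{\p V}_{L^2}+1)$ by \eqref{W^{4,2}-bound-om}; every other term is a $\#$-product in which the highest derivative falling on $\om_\ep$ or $\p_t\om_\ep$ has order $\leq 2$ (bounded by \eqref{W^{4,2}-bound-om}), on $u$ order $\leq 4$ (bounded by \eqref{es-u-new}), on $v_1,v_2$ order $\leq 3$ (bounded by \eqref{es-v-k-new}, \eqref{es-u-new}), or at most $\p\p_tR_k$ (bounded by \eqref{es-R-impr}), hence all $\leq C$ by Lemma \ref{alg}. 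Thus $\norm{\p\De\p_t\om_\ep}_{L^2}^2\leq C(\norm{\p V}_{L^2}^2+1)$; adding the $L^2$-bound obtained exactly as \eqref{es-p^2De-om-2} (whose right side is now $\leq C(\norm{V}_{L^2}^2+1)$ by \eqref{W^{4,2}-bound-om}) and applying Lemma \ref{eq-norm} yields
\[\norm{\p_t\om_\ep}_{W^{3,2}}^2\leq C\big(\norm{V}_{W^{1,2}}^2+\norm{\p_t\om_\ep}_{L^2}^2+1\big).\]

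\textbf{Step 2: control of $\om_\ep$ in $W^{5,2}$, and the first inequality.} Differentiating \eqref{eq-De-om-ep} up to three times in $x$, the leading contribution $\tfrac{1}{1+\ep^2}(\ep\,\p^3\p_t\om_\ep-\p^3(u\times\p_t\om_\ep))$ is bounded in $L^2$ by $C(\norm{\p_t\om_\ep}_{W^{3,2}}+1)$ after distributing derivatives (any term with at least one derivative on $u$ carries only $\p_t\om_\ep$ up to order $2$, hence is $\leq C$ by \eqref{W^{4,2}-bound-om} and \eqref{es-u-new}); the remaining $\#$-products contain at most $\p^4\om_\ep$ (i.e.\ $\norm{\om_\ep}_{W^{4,2}}$, bounded by \eqref{W^{4,2}-bound-om}), at most $\p^3\De u$ (i.e.\ $\norm{u}_{W^{5,2}}$, bounded by \eqref{es-u-new}) and $\p^3R_k$ (bounded by \eqref{es-R-impr}). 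The lower-order pieces $\norm{\p^j\De\om_\ep}_{L^2}$, $j=0,1,2$, are $\leq C$ by the very estimates behind \eqref{equiv-es-om-1}--\eqref{equiv-es-om-2}, whose right-hand sides are now uniformly bounded by \eqref{W^{4,2}-bound-om}. Hence $\norm{\De\om_\ep}_{W^{3,2}}^2\leq C(\norm{\p_t\om_\ep}_{W^{3,2}}^2+1)$, and Lemma \ref{eq-norm} gives $\norm{\om_\ep}_{W^{5,2}}^2\leq C(\norm{\p_t\om_\ep}_{W^{3,2}}^2+\norm{\om_\ep}_{L^2}^2+1)$. Inserting Step 1 proves the first inequality in \eqref{equiv-es-om-4}.

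\textbf{Step 3: the second inequality, and the main obstacle.} By \eqref{W^{4,2}-bound-om} we already have $\norm{\om_\ep}_{L^2}+\norm{\p_t\om_\ep}_{L^2}+\norm{V}_{L^2}\leq C$, so only $\norm{\p V}_{L^2}$ needs to be traded for $\norm{\p V^\top}_{L^2}$; writing $\p V=\p V^\top+\p V^\perp$ with $V^\perp=-\<\om_\ep,v_2\>u-2\<v_1,\p_t\om_\ep\>u$ and using $\norm{\p V^\perp}_{L^2}^2\leq C$ from \eqref{es-V^perp} (which rests on \eqref{equiv-es-om-1} and \eqref{W^{2,2}-bound-om}) gives $\norm{V}_{W^{1,2}}^2\leq C(\norm{\p V^\top}_{L^2}^2+1)$, hence the second inequality. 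The only genuine work is the bookkeeping in Steps 1 and 2: one must check that, after differentiating \eqref{eq-De-om-ep} three times and \eqref{eq-Dep_t-om} once, every term other than the designated leading one is truly of lower order, controlled either by the uniform $W^{4,2}$-bound \eqref{W^{4,2}-bound-om} for $\om_\ep$ or by the $\ep$-independent regularity \eqref{es-u-new}, \eqref{es-v-k-new}, \eqref{es-R-impr} — so that no derivative is lost and the constants remain independent of $\ep$. This is exactly the point where the previously upgraded regularity of $u$, $v_j$ and $R_k$ is indispensable.
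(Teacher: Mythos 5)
Your proposal is correct and follows essentially the same route as the paper: differentiate the elliptic identities \eqref{eq-De-om-ep} three times and \eqref{eq-Dep_t-om} once, invoke Lemma \ref{eq-norm} under the Neumann conditions, absorb all lower-order products via the uniform bound \eqref{W^{4,2}-bound-om} and the upgraded regularity \eqref{es-u-new}, \eqref{es-v-k-new}, \eqref{es-R-impr}, and finally trade $\p V$ for $\p V^{\top}$ using \eqref{es-V^perp}. The only difference is the order in which the two elliptic bounds are chained together, which is immaterial.
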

\begin{proof}
Using the formula \eqref{eq-De-om-ep}, a direct computation yields
\begin{align*}
	\p^3 \De \om_\ep=&\frac{1}{1+\ep^2}(\ep \p^3\p_t \om_\ep-u\times \p^3\p_t\om_\ep)\\
	&+\p^3u\#\p_t \om_\ep+\p^2 u\# \p\p_t \om_\ep+\p u\# \p^2 \p_t \om_\ep\\
	&+\ep\p^3(\om_\ep\#v_1\#u)+\p^3(\p u\#\p \om_\ep\# u+\De u\# \om_\ep\# u+\p u\#\p u\# \om_\ep)\\
	&+\p^3R_k.
\end{align*}
Then applying estimates \eqref{es-u-new} and \eqref{es-R1}, we obtain
\begin{align*}
\norm{\p^3\De \om_\ep}^2_{L^2}\leq& C\norm{\p^3 \p_t \om_\ep}^2_{L^2}+C\norm{u}^2_{W^{3,2}}\norm{\p_t\om_\ep}^2_{W^{2,2}}\\
&+C\ep(\norm{u}^2_{W^{5,2}}+\norm{v_1}^2_{W^{3,2}})^2\norm{\om_\ep}^2_{W^{4,2}}+C\norm{\p^3 R_k}^2_{L^2}\\
\leq&C(\norm{\p_t \om_\ep}^2_{W^{3,2}}+\norm{\om_\ep}^2_{W^{4,2}}+1).
\end{align*}
This leads to the key estimate for the $W^{5,2}$-norm of $\om_\ep$:
\begin{equation}\label{es-p^3-De-om}
	\begin{aligned}
	\norm{\om_\ep}^2_{W^{5,2}}\leq &C(\norm{\p_t \om_\ep}^2_{W^{3,2}}+\norm{\om_\ep}^2_{W^{4,2}}+1)\\
	\leq &C(\norm{\p_t \om_\ep}^2_{W^{3,2}}+1),
	\end{aligned}
\end{equation}
where the last inequality follows from the uniform \eqref{W^{4,2}-bound-om}.

On the other hand, from formula \eqref{eq-Dep_t-om}, we get
\begin{align*}
\p \De \p_t\om_\ep=&\frac{1}{1+\ep^2}(\ep \p V-u\times \p V)\\
&+\p u\# V+\p v_1\# \p_t \om_\ep+v_1\#\p \p_t \om_\ep+\ep\p\p_t(\om_\ep\#v_1\#u)\\
&+\p\p_t(\p u\#\p \om_\ep\# u+\De u\# \om_\ep\# u+\p u\#\p u\# \om_\ep)+\p\p_tR_k.
\end{align*}
Then we can derive from the estimates \eqref{es-u-new} and \eqref{es-T-k} that
\begin{align*}
\norm{\p\De \p_t\om_\ep}^2_{L^2}\leq &C(\norm{u}^2_{W^{3,2}}+1)\norm{V}^2_{W^{1,2}}+C(\norm{v_1}^2_{W^{1,2}}+1)^2\norm{\p_t \om_\ep}^2_{W^{1,2}}\\
&+C(\norm{u}^2_{W^{4,2}}+\norm{v_1}^2_{W^{5,2}})^2(\norm{\om_\ep}^2_{W^{4,2}}+\norm{\p_t \om_\ep}^2_{W^{2,2}})\\
&+C\ep\norm{v_2}^2_{W^{1,2}}\norm{\om_\ep}^2_{W^{2,2}}+\norm{\p_t R_k}^2_{W^{1,2}}\\
\leq &C(\norm{V}^2_{W^{1,2}}+\norm{\p_t \om_\ep}^2_{L^2}+\norm{\om_\ep}^2_{L^2}+1)\\
\leq& C(\norm{\p V^{\top}}^2_{L^2}+1),
\end{align*}
where we have used the uniform bound \eqref{W^{4,2}-bound-om} and the estimates \eqref{es-V^perp} for $V^{\perp}$. This immediately yields the desired $W^{3,2}$-estimate for $\p_t \om_\ep$:
\begin{align}
\norm{\p_t\om_\ep}^2_{W^{3,2}}\leq &C(\norm{\p V^{\top}}^2_{L^2}+1).\label{W^{3,2}-p_t-om}
\end{align}

Therefore, combining the above bounds \eqref{es-p^3-De-om} with \eqref{W^{3,2}-p_t-om} yields the desired estimate \eqref{equiv-es-om-4}.
\end{proof}

We now establish uniform $W^{5,2}$-energy estimates for $\om_\ep$. Taking $-\De V^\top_h$ as a test function for \eqref{para-Nuemann-eq-p-2-t-om'} and then passing to the limit as $h\to 0$, we obtain the following energy identity:
\begin{equation}\label{es-p-V-top}
\begin{aligned}
&\frac{1}{2}\(\int_{\Om}|\p V^\top|^2dx(T)-\int_{\Om}|\p V^\top|^2dx(0)\)\\
=&-\ep\int_0^T\int_{\Om}|\De V^\top|^2dxdt-\ep\int_0^T\int_{\Om}\<\De V^{\perp}, \De V^\top\>dxdt\\
&-\int_0^T\int_{\Om}\<u\times \De V^{\perp}, \De V^\top\>dxdt+\int_0^T\int_{\Om}\<\p(V\# v_1\# u), \p V^\top\>dxdt\\
&-\ep\int_0^T\int_{\Om}\<\mbox{div}(\<\p u, V\>u), \De V^\top\>dxdt-\ep\int_0^T\int_{\Om}\<\p u\# u\# \p V, \De V^\top\>dxdt\\
&-\ep\int_0^T\int_{\Om}\<\p u\#\p u\# V, \De V^\top\>dxdt-\ep\int_0^T\int_{\Om}\<\mbox{div}F, \De V^\top\>dxdt\\
&+\int_0^T\int_{\Om}\<\p \bar{R_1}, \p V^\top\>dxdt\\
=&-\ep\int_0^T\int_{\Om}|\De V^\top|^2dxdt+N_1+\cdots +N_8.
\end{aligned}
\end{equation}

Next, we provide detailed estimates for the terms $N_1$-$N_8$. We begin by estimating the terms $N_1$. Since the term $\De V^{\perp}$ is given by
\begin{align*}
\De V^\perp=&-\De\((\<v_2, \om_\ep\>+2\<v_1,\p_t\om_\ep\>)u\),
\end{align*}
we can make use of the estimates \eqref{es-u-new} to give a bound of $\norm{\De V^\perp}_{L^2}$:
\begin{align*}
\norm{\De V^{\perp}}^2_{L^2}\leq& C(\norm{v_2}^4_{W^{2,2}}+\norm{v_1}^4_{W^{2,2}}+\norm{u}^4_{W^{4,2}})(\norm{\om_\ep}^2_{W^{2,2}}+\norm{\p_t \om_\ep}^2_{W^{2,2}})\\
\leq& C(\norm{\om_\ep}^2_{W^{2,2}}+\norm{\p_t \om_\ep}^2_{W^{2,2}})\leq C.
\end{align*}
Hence, we have
\begin{align*}
|N_1|\leq &C\ep \int_0^T\int_{\Om}|\De V^{\perp}|^2dxdt+\frac{\ep}{10}\int_0^T\int_{\Om}|\De V^\top|^2dxdt\\
\leq &C\ep\int_0^T(\norm{\om_\ep}^2_{W^{2,2}}+\norm{\p_t \om_\ep}^2_{W^{2,2}})dx+\frac{\ep}{10}\int_0^T\int_{\Om}|\De V^\top|^2dxdt\\
\leq &C\ep T+\frac{\ep}{10}\int_0^T\int_{\Om}|\De V^\top|^2dxdt.
\end{align*}

On the other hand, the term $u\times \De V^\perp$ can be expanded as
\begin{align*}
u\times \De V^\perp=&-2\p (\<v_2, \om_\ep\>+2\<v_1,\p_t\om_\ep\>) u\times \p u\\
&-(\<v_2, \om_\ep\>+2\<v_1,\p_t\om_\ep\>) v_1.
\end{align*}
Using estimates \eqref{es-u-new} and \eqref{W^{4,2}-bound-om}, we derive the following pointwise bound
\[|\p(u\times \De V^\perp)|\leq C(|\p^2v_2|+|\p v_2|+|\p^2 \p_t \om_\ep|+|\p \p_t \om_\ep|+1).\]
This leads to the estimate for $N_2$
\begin{align*}
|N_2|=&\left|\int_0^T\int_{\Om}\<\p(u\times \De V^{\perp}), \p V^\top\>dxdt\right|\\
=&\int_0^T\int_{\Om}|\p V^\top|^2dxdt+CT,
\end{align*}
where we have used the following boundary condition:
\[\frac{\p V^\top}{\p \nu}|_{\p \Om\times [0,T_0]}=\frac{\p V}{\p \nu}|_{\p \Om\times [0,T_0]}-\frac{\p }{\p \nu}\<V,u\>u|_{\p \Om\times [0,T_0]}=0.\]

Next, we turn to the estimation of the remaining terms $N_3$-$N_6$. Applying the a priori estimates \eqref{es-u-new} and \eqref{W^{4,2}-bound-om}, we proceed as follows:
\begin{align*}
|N_3|\leq &C\int_0^T\int_{\Om}(|\p V|+|V|)|\p V^\top|dxdt\\
\leq &C \int_0^T\int_{\Om}|\p V^\top|^2dxdt+C\int_0^T\int_{\Om}|\p V^\perp|^2+|V|^2dxdt\\
\leq &C \int_0^T\int_{\Om}|\p V^\top|^2dxdt+CT.\\
|N_4+N_5+N_6|\leq &C\ep\int_0^T\int_{\Om}(|\p V|+|V| )|\De V^\top|dxdt\\
\leq &C\ep \int_0^T\int_{\Om}|\p V^\top|^2dxdt+C\int_0^T\int_{\Om}|\p V^\perp|^2dxdt\\
&+C\int_0^T\int_{\Om}|V|^2dxdt+\frac{\ep}{10}\int_0^T\int_{\Om}|\De V^\top|^2dxdt\\
\leq &C\ep\int_0^T\int_{\Om}|\p V^\top|^2dxdt+C\ep T+\frac{\ep}{10}\int_0^T\int_{\Om}|\De V^\top|^2dxdt.
\end{align*}

For the term $N_7$, we have
\begin{align*}
|N_7|\leq &C\ep\int_0^T\int_{\Om}|\mbox{div}F|^2dxdt+\frac{\ep}{10}\int_0^T\int_{\Om}|\De V^\top|^2dxdt\\
\leq &C\ep T+\frac{\ep}{10}\int_0^T\int_{\Om}|\De V^\top|^2dxdt,
\end{align*}
where we used the estimates \eqref{es-u-new}, \eqref{W^{4,2}-bound-om} to give a pointwise bound of $\mbox{div}F$:
\begin{align*}
|\mbox{div}F|\leq C(|\p^2v_2|+|\p v_2|+|\p \p_t \om_\ep|+1).
\end{align*}

The last term $N_8$ admits the following estimate:
\begin{align*}
|N_8|\leq &C\int_0^T\int_{\Om}|\p \bar{R}_1|^2dxdt+C\int_0^T\int_{\Om}|\p V^\top|^2dxdt\\
\leq & C\int_0^T\norm{\p_t\om_\ep}^2_{W^{3,2}}dt+C\int_0^T\int_{\Om}(|\p V|^2+|\p V^\top|^2dxdt+CT\\
\leq &C\int_0^T\int_{\Om}|\p V^\top|^2dxdt+CT.
\end{align*}
Here we applied the estimates \eqref{es-T-k}, \eqref{es-u-new}, and \eqref{es-R-impr} to obtain a bound of $|\p \bar{R_1}|$:
\begin{align*}
	|\p \bar{R}_1|\leq &C(|\p^2v_2|+|\p v_2|)+C(|\p \p^2_t R_k|+|\p^2_t R_k|)\\
	&+C(|\p^2 \p_t \om_\ep|+|\p\p_t \om_\ep|+|\p \De \om_\ep|)\\
	&+C(|\p \De \p_t \om_\ep|+|\De \p_t \om_\ep|+|\p V|+|V|),
\end{align*}
and additionally used the equivalent bound \eqref{equiv-es-om-4} to simplify the final estimate.

Therefore, we substitute the above estimates for $N_1$-$N_8$ into \eqref{es-p-V-top} to deduce
\begin{align*}
&\frac{1}{2}\(\int_{\Om}|\p V^\top|^2dx(T)-\int_{\Om}|\p V^\top|^2dx(0)\)\leq C\int_0^T\int_{\Om}|\p V^\top|^2dxdt+CT.
\end{align*}
Gronwall's inequality yields the uniform bound:
\begin{align*}
\sup_{0\leq t\leq T}\norm{\p V^\top}^2_{L^2}\leq C(T_0, \norm{\p V^\top(0)}^2_{L^2})\leq C(T_0, \norm{u_0}_{W^{2k+3,2}}).
\end{align*}
This estimate further implies that
\begin{align}
\sup_{0\leq t\leq T_0}\sum_{i=0}^2\norm{\p^i_t\om_\ep}^2_{W^{5-2i,2}}\leq C.\label{W^{5,2}-bound-om}
\end{align}

By virtue of the uniform energy bound \eqref{W^{5,2}-bound-om} satisfied by $\om_\ep$, we apply Lemma \ref{A-S} (Aubin-Simon compactness theorem) to deduce that the limiting map $v_k$ of $\om_\ep$ inherits the following regularity estimates
\begin{align}
\sup_{0\leq t\leq T_0}\sum_{i=0}^2\norm{\p_t^iv_k}^2_{W^{5-2i,2}(\Om)}\leq C.\label{impro-es-v_k}
\end{align}

\subsection{The proof of the property $\T_{k+1}$}
Assume that $u_0$ satisfies the $(k+1)$-th order compatibility conditions \eqref{com-cond-1}, and that the solution $u$ possesses property $\T_k$ ( see~~\eqref{es-case-k}), namely for $0\leq i\leq k+1$, we have
\begin{align}
	v_i\in L^\infty([0,T_0],W^{2k+3-2i,2}(\Om)),\label{es-case-k1}
\end{align}
Additionally, from the improved estimates \eqref{impro-es-v_k} for $v_k$, we obtain the improved regularity
\begin{align}
	v_{i}\in L^\infty([0,T_0], W^{2(k+1)+3-2i,2}(\Om))\label{impro-es-v_k-1}
\end{align}
for $k\leq i\leq k+2$.

With these regularity results established, we now proceed to demonstrate that the solution $u$ satisfies the property $\T_{k+1}$.

\begin{prop}\label{pf-T_{k+1}}
Assume that $u_0$ satisfies the $(k+1)$-th order compatibility conditions \eqref{com-cond-1}, and the solution $u$ satisfies the bounds \eqref{es-case-k1}. Then for $0\leq n\leq k+2$, we have
\begin{align}
v_{k+2-n}\in L^\infty([0,T_0],W^{2n+1,2}(\Om)),\label{es-case-(k+1)}
\end{align}
which implies that $u$ satisfies property $\T_{k+1}$
\end{prop}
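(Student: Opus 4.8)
The plan is to restate the claim in terms of the intrinsic time derivatives $v_i$, identify it with property $\T_{k+1}$, and then prove it by a downward induction on the index $i=k+2-n$, each step of which solves the spatial Laplacian of $v_i$ from its evolution equation and runs a two-round elliptic bootstrap.

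First I would set $i=k+2-n$, so that \eqref{es-case-(k+1)} reads $v_i\in L^\infty([0,T_0],W^{2k+5-2i,2}(\Om))$ for $0\le i\le k+2$; by Lemma \ref{equiv-es-p-t-u} (used with $l=1$ and with the parameter $k+2$ in place of $k$) this is equivalent to $\p^i_t u\in L^\infty([0,T_0],W^{2(k+1)+3-2i,2}(\Om))$ for $0\le i\le k+2$, i.e.\ to property $\T_{k+1}$. The cases $i\in\{k,k+1,k+2\}$ (that is, $n\in\{0,1,2\}$) are already contained in \eqref{impro-es-v_k-1}; these form the base of the induction, and only $0\le i\le k-1$ remains.

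For the inductive step, assume $v_{i+1}\in L^\infty([0,T_0],W^{2k+3-2i,2}(\Om))$. Applying $J(u)=u\times$ to the evolution equation \eqref{eq-v_k} for $v_i$ (with $k$ replaced by $i$), using $(u\times)^2=-\mathrm{Id}$ on $T_u\U^2$ and the extrinsic formula \eqref{formula-laplace-and-curvature}, I obtain
\[\De v_i=-u\times v_{i+1}-2\<\p u,\p v_i\>u-\<\De u,v_i\>u-|\p u|^2 v_i-R_i,\]
which for $i=0$ reduces to the Landau--Lifshitz equation $\De u=-u\times v_1-|\p u|^2 u$. Since $u$ solves \eqref{eq-LL} we have $\frac{\p u}{\p\nu}|_{\p\Om\times[0,T_0]}=0$, and differentiating this boundary condition in $t$ (rigorously via repeated application of Lemma \ref{comp-cond}) together with the fact that the corrections relating $v_i$ to $\p^i_t u$ also have vanishing normal derivative on $\p\Om$ (because $\frac{\p u}{\p\nu}=0$ there) yields $\frac{\p v_i}{\p\nu}|_{\p\Om\times[0,T_0]}=0$, so the elliptic estimate of Lemma \ref{eq-norm} applies. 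Starting from the a priori bound $v_i\in L^\infty W^{2k+3-2i}$ provided by \eqref{es-case-k1}, Lemma \ref{alg}, combined with $u=v_0\in L^\infty W^{2k+3}$ and the extra regularity of $v_j$ for $j\le i-1$ (which gives $R_i\in L^\infty W^{2k+4-2i}$ exactly as in Lemma \ref{es-R}), shows that every term on the right lies in $L^\infty W^{2k+2-2i}$, the worst being $\<\p u,\p v_i\>u$, limited only by $\p v_i\in L^\infty W^{2k+2-2i}$. Hence $\De v_i\in L^\infty W^{2k+2-2i}$ and Lemma \ref{eq-norm} gives $v_i\in L^\infty W^{2k+4-2i}$; feeding this improved bound back into the same estimates upgrades the terms containing $v_i$ to $L^\infty W^{2k+3-2i}$, so $\De v_i\in L^\infty W^{2k+3-2i}$ and therefore $v_i\in L^\infty W^{2k+5-2i}$, closing the induction. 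The case $i=0$ is identical, with $|\p u|^2$ playing the role of the variable coefficient, and yields $u\in L^\infty W^{2k+5}$. Assembling the cases $0\le i\le k+2$ gives \eqref{es-case-(k+1)} and hence $\T_{k+1}$.

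I expect the only genuine work to be the Sobolev-exponent bookkeeping, and this is also where the single delicate point lies: the a priori regularity of $v_i$ from \eqref{es-case-k1} is two derivatives short of the target, and since the variable-coefficient term $\<\p u,\p v_i\>u$ is controlled by $\p v_i$ rather than $v_i$ it gains only one derivative per application of elliptic regularity, which is exactly why two bootstrap rounds are needed at each stage; one then has to check that $R_i$ and the terms $\<\De u,v_i\>u$, $|\p u|^2 v_i$ never become the bottleneck, which holds because $u\in L^\infty W^{2k+3}$ and the lower-order $v_j$ ($j\le i-1$) carry surplus regularity. Beyond this there is no conceptual obstacle.
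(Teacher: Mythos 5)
Your proof is correct and follows essentially the same route as the paper: a downward induction on the index of $v_i$ (the paper phrases it as an upward induction on $n=k+2-i$), solving for $\De v_i$ from the evolution equation \eqref{eq-v_k} via $(u\times)^2=-\mathrm{Id}$, and running exactly the same two-round elliptic bootstrap with $\<\p u,\p v_i\>u$ as the bottleneck term, with $u$ itself handled last through $\De u=-u\times v_1-|\p u|^2u$. Your explicit justification of the Neumann condition $\frac{\p v_i}{\p\nu}|_{\p\Om}=0$ needed for Lemma \ref{eq-norm} is a detail the paper leaves implicit, but it is not a different argument.
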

\begin{proof}
We employ mathematical induction to prove this proposition. First, observe that for $n=0,1,2$, we have already established
\[v_{k+2-n}\in L^\infty([0,T_0], W^{2n+1,2}(\Om)).\]
	
Now we assume that \eqref{es-case-(k+1)} holds for $n=l$ with $2\leq l\leq k+1$. Then we prove the estimates \eqref{es-case-(k+1)} for $n=l+1$.
	
Under the induction hypothesis, we have
\begin{itemize}
\item For $0\leq n\leq l$,
\[v_{k+2-n}\in L^\infty([0,T], W^{2n+1}(\Om)).\]
\item For $0\leq j\leq k+1-l$,
\[v_{j}\in L^\infty([0,T], W^{2k+3-2j}(\Om)).\]
\end{itemize}
	
From equation \eqref{eq-v_k}, we derive the following expression for $\De v_{k+1-l}$:
\begin{align*}
\De v_{k+1-l}=&-u\times v_{k+2-l}+\p u\# \p v_{k+1-l}\# u\\
&+\De u\# v_{k+1-l}\# u+\p u\#\p u\# v_{k+1-l}\\
&+\sum_{\substack{a_1+\cdots, a_s+i+j=k+1-l,\\ 1\leq a_l,i,j\leq k-l}}v_{a_1}\#\cdots\#v_{a_s}\# \p v_i\#\p v_j\\
&+\sum_{\substack{b_1+b_2+b_3=k+2-l,\\ 1\leq b_l\leq k-l}}v_{b_1}\#(u\times v_{b_2})\#v_{b_3}.
\end{align*}

Since $v_{k+2-l}\in L^\infty([0,T_0], W^{2l+1}(\Om))$ and $v_i\in  L^\infty([0,T_0], W^{2l+1}(\Om))$ for $0\leq i\leq k+1-l$ with $2\leq l\leq k+1$, we apply Lemma \ref{alg} to conclude
\[\De v_{k+1-l}\in L^\infty([0,T_0], W^{2l}(\Om)),\]
which implies that
\begin{align}
v_{k+1-l}\in L^\infty([0,T_0], W^{2(l+1)}(\Om)).\label{es-v_{k+1-l}}
\end{align}

Furthermore, since $v_i\in  L^\infty([0,T_0], W^{2(l+1)+1}(\Om))$ for $i\leq k-l$ with $2\leq l\leq k$, the estimate \eqref{es-v_{k+1-l}} again yields the improved estimate
\[\De v_{k+1-l}\in L^\infty([0,T_0], W^{2l+1}(\Om)),\]
which implies
\begin{align}
v_{k+1-l}\in L^\infty([0,T_0], W^{2(l+1)+1}(\Om)).\label{es-v_{k+1-l}-1}
\end{align}
This completes the induction step. In particular, the special case $n=k+1$ in \eqref{es-v_{k+1-l}-1} gives
\[v_1\in L^\infty([0,T_0], W^{2(k+1)+1}(\Om)).\]

It remains to establish $ u\in L^\infty([0,T_0], W^{2(k+1)+3}(\Om))$. Recall that
\begin{align*}
\De u=&-u\times v_1-|\p u|^2u.
\end{align*}
Since $v_1\in L^\infty([0,T_0], W^{2k+3}(\Om))$ and $u\in L^\infty([0,T_0], W^{2k+3}(\Om))$, we deduce from the above equation that
\[\De u\in L^\infty([0,T_0], W^{2(k+1)}(\Om)),\]
which yields
\[u\in L^\infty([0,T_0], W^{2(k+1)+2}(\Om)).\]
This estimate further implies
\[\De u\in L^\infty([0,T_0], W^{2(k+1)+1}(\Om)).\]
We then apply $L^2$-estimates for Laplacian operator to obtain
\[u\in L^\infty([0,T_0], W^{2(k+1)+3}(\Om)).\]

Therefore, the proof of estimates \eqref{es-case-(k+1)} is completed. By Lemma \ref{equiv-es-p-t-u}, the regularity bounds \eqref{es-case-(k+1)} in fact imply the property $\T_{k+1}$.
\end{proof}

Finally, we prove the main theorem \ref{main-thm}.

\begin{proof}[\bf{The proof of Theorem \ref{main-thm}}]
In order to establish this theorem, it is sufficient to demonstrate that the property $\T_k$ (namely, \eqref{es-case-k}) holds for each $k\geq 1$. We proceed by induction on $k$. The property $\T_1$ has been established in our previous work \cite{CW4}. Now assuming that $u$ satisfies the property $\T_k$ with $k\geq 1$, Proposition \ref{pf-T_{k+1}} guarantees that $\T_{k+1}$ holds.

Additionally, if $u_0\in C^\infty(\bar{\Om})$, which satisfies the $k$-order compatibility conditions defined by \eqref{com-cond-1} for any $k\geq 0$, the property $\T_k$ yields that
\[\sup_{0<t<T_0}\norm{\p^j_t\p^s_xu}^2_{L^2(\Om}<\infty\]
for any $j,s\in \mathbb{N}$. So, it follows from the Sobolev embedding theorem that
\[u\in C^\infty(\bar{\Om}\times[0,T_0]),\]

Therefore, the proof is completed.
\end{proof}

\medskip
\appendix
\renewcommand{\appendixname}{Appendix~\Alph{section}}
\section{Local existence of regular solutions to parabolic equations}\label{s: parabolic-eq}
In this appendix, we consider the following initial-Neumann boundary value problem:
\begin{equation}\label{eq-h}
\begin{cases}
\p_t h+\<\p_t u, h\>u=(\ep I+u\times)(\De h+f_1\#\p h+f_2\#h)+f_3\\[1ex]
\frac{\p h}{\p \nu}|_{\p\Om\times [0,T]}=0,\\[1ex]
h(0)=h_0: \Om\to \Real^3,\quad \frac{\p h_0}{\p \nu}|_{\p\Om}=0,
\end{cases}
\end{equation}
where
\begin{itemize}
	\item $u:\Om\times [0,T_0]\to \mathbb{S}^2$ is a given map with Neumann boundary condition $\frac{\p u}{\p \nu}|_{\p\Om}=0$;
	\item For $i=1,2,3$, $f_i:\Om\times [0,T_0]\to \Real^3$ are given vector fields;
	\item The notation $\#$ denotes the linear contraction.
\end{itemize}

\subsection{Galekin approximation and $W^{2,2}$-regular solutions}\label{ss: Ga-approx}
\medskip
Let $\Om$ be a smooth bounded domain in $\Real^{3}$, $\ld_{i}$ be the $i^{th}$ eigenvalue of the operator $\De-I$ with Neumann boundary conditions. The corresponding eigenfunction $g_{i}$ satisfies
\[(\De-I)g_{i}=-\ld_{i}g_{i}\,\,\,\,\quad\text{with}\quad\,\,\,\,\frac{\p g_{i}}{\p \nu}|_{\p\Om}=0.\]

Without loss of generality, we assume that $\{g_{i}\}_{i=1}^{\infty}$ forms a complete orthonormal basis of $L^{2}(\Om,\Real^{1})$. For each $n\in \mathbb{N}$, define the finite-dimensional subspace
$$H_{n}=\text{span}\{g_{1},\dots g_{n}\}\subset L^2(\Om),$$
and let $P_{n}:L^{2}\to H_{n}$ denote the Galerkin projection given by
\[f^{n}=P_{n}f=\sum_{1}^{n}\<f,g_{i}\>_{L^{2}}g_{i} \quad \text{for any} \,\,f\in L^{2}.\]
The following  result is established in \cite{CJ}.

\begin{lem}\label{es-P_n}
There exists a constant $C$, independent of $n$, such that the projection $P_n$ satisfies the following properties.
	\begin{enumerate}
		\item For $f\in W^{1,2}(\Om,\Real^1)$,
		$$\norm{P_n(f)}_{W^{1,2}(\Om)}\leq \norm{f}_{W^{1,2}(\Om)},$$
		\item For $f\in W^{2,2}(\Om,\Real^1)$ with $\frac{\p f}{\p \nu}|_{\p \Om}=0$, $$\norm{P_n(f)}_{W^{2,2}(\Om)}\leq C\norm{f}_{W^{2,2}(\Om)},$$
		\item For $f\in W^{3,2}(\Om,\Real^1)$ with $\frac{\p f}{\p \nu}|_{\p \Om}=0$, $$\norm{P_n(f)}_{W^{3,2}(\Om)}\leq C\norm{f}_{W^{3,2}(\Om)}.$$
	\end{enumerate}
\end{lem}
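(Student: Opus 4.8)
The plan is to pass to the spectral picture of $L:=\De-I$ with homogeneous Neumann conditions. Since $L$ is self-adjoint on $L^2(\Om)$ and $\<-Lg,g\>_{L^2}=\norm{\n g}_{L^2}^2+\norm{g}_{L^2}^2\geq\norm{g}_{L^2}^2$ for $g$ in its domain, every eigenvalue satisfies $\ld_i\geq 1$; by elliptic regularity each $g_i\in C^\infty(\bar{\Om})$, so every finite combination $g\in H_n$ satisfies $\frac{\p g}{\p\nu}|_{\p\Om}=0$. For $f=\sum_i c_ig_i\in L^2(\Om)$ one has $P_nf=\sum_{i=1}^n c_ig_i$, and I will use two elementary facts throughout: the $L^2$-contraction $\norm{P_nf}_{L^2}\leq\norm{f}_{L^2}$, and the commutation $\De(P_nf)=L(P_nf)+P_nf=P_n(Lf)+P_nf$, valid whenever $f\in W^{2,2}(\Om)$ with $\frac{\p f}{\p\nu}|_{\p\Om}=0$ (such $f$ lies in the domain of $L$, so $Lf=\sum_i(-\ld_ic_i)g_i$ converges in $L^2$).

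For part (1), the $g_i$ are also mutually orthogonal for the inner product $(u,v)_{W^{1,2}}:=\int_\Om(\n u\cdot\n v+uv)$: integrating by parts with $\frac{\p g_i}{\p\nu}|_{\p\Om}=0$ gives $(g_i,g_j)_{W^{1,2}}=\<-Lg_i,g_j\>_{L^2}=\ld_i\delta_{ij}$, and the same computation gives $(f,g_i)_{W^{1,2}}=\ld_i\<f,g_i\>_{L^2}$ for every $f\in W^{1,2}(\Om)$. Hence the span of $\{g_i\}$ is dense in $W^{1,2}(\Om)$ — a function that is $(\cdot,\cdot)_{W^{1,2}}$-orthogonal to every $g_i$ is $L^2$-orthogonal to every $g_i$, so it vanishes by completeness — and the $(\cdot,\cdot)_{W^{1,2}}$-orthogonal projection onto $H_n$ equals $P_n$. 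Therefore $\norm{P_nf}_{W^{1,2}}\leq\norm{f}_{W^{1,2}}$.

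For part (2), apply Lemma \ref{eq-norm} with $k=0$ to $w=P_nf$ (smooth, with vanishing Neumann trace): $\norm{P_nf}_{W^{2,2}}\leq C(\norm{P_nf}_{L^2}+\norm{\De P_nf}_{L^2})$. Using $\De P_nf=P_n(Lf)+P_nf$ and the $L^2$-contraction, the right-hand side is $\leq C(\norm{f}_{L^2}+\norm{Lf}_{L^2})\leq C\norm{f}_{W^{2,2}}$. For part (3), apply Lemma \ref{eq-norm} with $k=1$ to $w=P_nf$: $\norm{P_nf}_{W^{3,2}}\leq C(\norm{P_nf}_{L^2}+\norm{\De P_nf}_{W^{1,2}})$. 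Since $f\in W^{3,2}(\Om)$ we have $Lf\in W^{1,2}(\Om)$, so part (1) applied to $Lf$ and to $f$ gives $\norm{\De P_nf}_{W^{1,2}}\leq\norm{P_n(Lf)}_{W^{1,2}}+\norm{P_nf}_{W^{1,2}}\leq\norm{Lf}_{W^{1,2}}+\norm{f}_{W^{1,2}}\leq C\norm{f}_{W^{3,2}}$, and the bound follows.

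The step requiring genuine care, which I expect to be the main (if mild) obstacle, is the justification of the commutation identity $\De(P_nf)=P_n(\De f)$ and of the homogeneous Neumann condition for $P_nf$ at the regularity required by Lemma \ref{eq-norm}; both reduce to the smoothness of the $g_i$ together with the $L^2$-convergence of the eigenexpansion of $Lf$ for $f$ in the domain of $L$. One must also record explicitly that the constant $C_{k,m}$ in Lemma \ref{eq-norm} depends only on $k$, $m$ and $\Om$, hence is independent of $n$, because $P_nf$ always lies in the single class of $W^{k+2,2}(\Om)$ functions with vanishing Neumann trace — which is exactly what makes the bounds uniform. Beyond this bookkeeping I foresee no difficulty.
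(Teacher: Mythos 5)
Your proof is correct, and it is essentially the standard argument: the paper itself gives no proof of this lemma (it is quoted from \cite{CJ}), and the argument there proceeds exactly as you do — identifying $P_n$ with the $W^{1,2}$-orthogonal projection via the eigenfunction orthogonality $(g_i,g_j)_{W^{1,2}}=\ld_i\delta_{ij}$, and then bootstrapping to $W^{2,2}$ and $W^{3,2}$ through the commutation $\De P_nf=P_n(\De f)$ and the equivalent-norm Lemma \ref{eq-norm} applied to the smooth, Neumann-trace-free function $P_nf$. All the delicate points you flag (membership of $f$ in the domain of $\De-I$, $\ld_i\geq 1$, and the $n$-independence of the constant in Lemma \ref{eq-norm}) are handled correctly.
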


Now, we consider the Galerkin approximation of \eqref{eq-h}, given by the following system:
\begin{equation}\label{eq-Ga-h}
\begin{cases}
\p_t h_n +P_n(\<\p_t u, h_n\>u)=P_n\{(\ep I+u\times)(\De h_n+f_1\#\p h_n+f_2\#h_n)+f_3\}\\[1ex]	
h_n(0)=P_n(h_0).
\end{cases}
\end{equation}
By the standard theory of ordinary differential equations, there exists a unique solution $h_n$ defined on a maximal interval of existence $[0,T_n)$, if $u$ and $f_i$ with $i=1,2,3$ satisfy the following regularity conditions:
\begin{equation}\label{assumption-A}
\begin{aligned}
u\in& C^0([0,T_0], W^{3,2}(\Om))\text{and}\,\, \p_t u\in C^0([0,T_0],W^{1,2}(\Om)),\\
f_i\in& C^0([0,T_0], W^{1,2}(\Om))\,\,\text{for $i=1,2$},\\
f_3\in& C^0([0,T_0], L^2(\Om))\cap L^2([0,T_0], W^{1,2}(\Om)).
\end{aligned}
\end{equation}

Next, we derive uniform $W^{2,2}$-energy estimates for the approximate solutions $h_n$, independent of $n$.  Using $h_n$ to test equation \eqref{eq-Ga-h} gives
\begin{equation}\label{es-h}
\begin{aligned}
\frac{1}{2}\p_t\int_{\Om}|h_n|^2dx\leq &-\ep\int_{\Om}|\p h_n|^2dx+C\int_{\Om}(|\p u|+|f_1|)|\p h_n||h_n|dx\\
&+C\int_{\Om}(|f_2|+|\p_t u|)|h_n|^2dx+C\int_{\Om}|f_3||h_n|dx\\
\leq &-\ep\int_{\Om}|\p h_n|^2dx\\
&+C_\ep(\norm{|\p u|}_{L^4}+\norm{f_1}_{L^4})\norm{h_n}_{L^4}\norm{\p h_n}_{L^2}\\
&+C(\norm{f_2}_{L^2}+\norm{\p_t u}_{L^2})\norm{h_n}^2_{L^4}+C\norm{f_3}_{L^2}\norm{h_n}_{L^2}\\
\leq & -\frac{\ep}{2}\int_{\Om}|\p h_n|^2dx+C\norm{f_3}^2_{L^2}\\
&+C_\ep(\norm{u}^2_{W^{2,2}}+\norm{f_1}^2_{W^{1,2}}+\norm{f_2}_{L^2}^2+\norm{\p_t u}^2_{L^2}+1)^2\norm{h_n}^2_{L^2},
\end{aligned}
\end{equation}
where we applied the following interpolation inequality:
\[\norm{h_n}_{L^4}\leq C\norm{h_n}^{3/4}_{W^{1,2}}\norm{h_n}_{L^2}^{1/4}.\]

Then we take $\De^2 h_n$ as a test function for \eqref{eq-Ga-h}. A direct computation yields
\begin{equation}\label{es-h1}
\begin{aligned}
\frac{1}{2}\p_t\int_{\Om}|\De h_n|^2dx
\leq&-\ep\int_{\Om}|\p \De h_n|^2dx+C\int_{\Om}|\p u||\De h_n||\p \De h_n|dx\\
&+C\int_{\Om}|\p u||f_1||\p h_n||\p \De h_n|dx\\
&+C\int_{\Om}(|\p f_1||\p h_n|+|f_1||\p^2 h_n|)|\p \De h_n|dx\\
&+C\int_{\Om}|\p u|(|f_2|+|\p_t u|)|h_n||\p \De h_n|dx\\
&+C\int_{\Om}((|\p f_2|+|\p\p_t u|)|h_n|+(|f_2|+\p_t u|)|\p h_n|)|\p \De h_n|dx\\
&+C\int_{\Om}(|\p f_3||\p \De h_n|dx\\
=&-\ep\int_{\Om}|\p \De h_n|^2dx\\
&+I_1+I_2+I_3+I_4+I_5+I_6+I_7.
\end{aligned}
\end{equation}
The terms $I_1$-$I_6$ admit the following estimates:
\begin{align*}
|I_1|\leq& C\norm{\p u}_{L^6}\norm{\De h_n}_{L^3}\norm{\p\De h_n}_{L^2}\\
\leq &C\norm{u}_{W^{1,2}}\norm{\De h_n}^{1/2}_{L^2}\norm{\De h_n}_{W^{1,2}}^{1/2}\norm{\p \De h_n}_{L^2}\\
\leq &C_\ep(\norm{u}_{W^{1,2}}^4+1)\norm{\De h_n}^2_{L^2}+\frac{\ep}{14}\norm{\p \De h_n}^2_{L^2},\\
|I_2|\leq &C\norm{\p u}_{L^6}\norm{f_1}_{L^6}\norm{\p h_n}_{L^6}\norm{\p\De h_n}_{L^2}\\
\leq &C_\ep\norm{u}_{W^{1,2}}^2(\norm{\De h_n}^2_{L^2}+\norm{h_n}^2_{L^2})+\frac{\ep}{14}\norm{\p \De h_n}^2_{L^2},\\
|I_3|\leq &C\norm{\p f_1}_{L^2}\norm{\p h_n}_{L^\infty}\norm{\p\De h_n}_{L^2}+C\norm{\p f_1}_{L^6}\norm{\p^2 h_n}_{L^3}\norm{\p\De h_n}_{L^2}\\
\leq &C\norm{\p f_1}_{L^2}\norm{\p h_n}^{1/4}_{L^2}\norm{\p h_n}^{3/4}_{W^{2,2}}\norm{\p\De h_n}_{L^2}\\
&+C\norm{\p f_1}_{L^6}\norm{\p^2 h_n}^{1/2}_{L^2}\norm{\p^2 h_n}^{1/2}\norm{\p\De h_n}_{L^2}\\
\leq &C(\norm{f_1}^8_{W^{1,2}}+1)(\norm{\De h_n}^2_{L^2}+\norm{h_n}^2_{L^2})+\frac{\ep}{14}\norm{\p \De h_n}^2_{L^2},\\
|I_4|\leq &C\norm{\p u}_{L^6}(\norm{f_2}_{L^6}+\norm{\p_t u}_{L^6})\norm{h_n}_{L^6}\norm{\p\De h_n}_{L^2}\\
\leq&C_\ep \norm{u}^2_{W^{1,2}}(\norm{f_2}^2_{W^{1,2}}+\norm{\p_t u}^2_{W^{1,2}})(\norm{\De h_n}^2_{L^2}+\norm{h_n}^2_{L^2})+\frac{\ep}{14}\norm{\p \De h_n}^2_{L^2},\\
|I_5|\leq &C(\norm{f_2}_{W^{1,2}}+\norm{\p_t u}_{W^{1,2}})\norm{h_n}_{W^{2,2}}\norm{\p\De h_n}_{L^2}\\
\leq&C_\ep(\norm{f_2}^2_{W^{1,2}}+\norm{\p_t u}^2_{W^{1,2}})(\norm{\De h_n}^2_{L^2}+\norm{h_n}^2_{L^2})+\frac{\ep}{14}\norm{\p \De h_n}^2_{L^2},\\
|I_6|\leq &C(\norm{u}_{W^{1,2}}+1)\norm{f_3}_{W^{1,2}}\norm{\p \De h_n}_{L^2}\\
\leq&C_\ep (\norm{u}^2_{W^{1,2}}+1)\norm{f_3}^2_{W^{1,2}}+\frac{\ep}{14}\norm{\p \De h_n}^2_{L^2}.
\end{align*}

Substituting the above estimates of $I_1$-$I_7$ into the formula \eqref{es-h1}, we obtain
\begin{equation}\label{es-h2}
\begin{aligned}
	\frac{1}{2}\p_t\int_{\Om}|\De h_n|^2dx\leq&-\frac{\ep}{2}\int_{\Om}|\p \De h_n|^2dx+C_\ep \norm{f_3}^2_{W^{1,2}}\\
	&+C_\ep(\norm{h_n}^2_{L^2}+\norm{\De h_n}^2_{L^2}),
\end{aligned}
\end{equation}
where we have used the regularity assumptions \eqref{assumption-A}.

Then combining the estimates \eqref{es-h} with \eqref{es-h2}, we derive from Gronwall's inequality that
\begin{align}
\sup_{0\leq t\leq T_0}\norm{h_n}^2_{W^{2,2}(\Om)}+\norm{h_n}^2_{L^2([0,T_0], W^{3,2}(\Om)}\leq C_\ep(T_0, \norm{h_0}_{W^{2,2}(\Om)}).\label{es-h3}
\end{align}
Passing to the limit as $n \to \infty$, we obtain a solution $h$ to the original problem \eqref{eq-h} that satisfies the regularity
\begin{align}
h\in L^\infty([0,T_0],W^{2,2}(\Om))\cap L^2([0,T_0], W^{3,2}(\Om)).\label{es-h4}
\end{align}

Next, we establish regularity estimates for $\p_t h$. From the evolution equation \eqref{eq-h}, that is,
\[\p_t h=(\ep I+u\times)(\De h+f_1\#\p h+f_2\#h)+f_3-\<h,\p_t u\>u,\]
we infer from the bound \eqref{es-h4} together with the regularity assumptions \eqref{assumption-A} that
\[\p_th\in L^\infty([0,T_0],L^2(\Om))\cap L^2([0,T_0], W^{1,2}(\Om)).\]
Applying Lemma~\ref{C^0-em}, it follows that
\[h\in C^0([0,T_0],W^{2,2}(\Om)).\]
Consequently, using again the regularity assumptions \eqref{assumption-A} and the equation \eqref{eq-h}, we further obtain
\[\p_t h\in C^0([0,T_0],L^2(\Om)).\]

Now we summaries these regularity estimates in the following theorem.
\begin{thm}\label{thm-h}
Let $\Om \subset \mathbb{R}^3$ be a bounded domain with smooth boundary, and let $h_0 \in W^{2,2}(\Om)$ satisfy the compatibility condition
$$\frac{\p h_0}{\p \nu}|_{\p\Om}=0.$$
Under the regularity assumptions \eqref{assumption-A}, the initial-Neumann boundary value problem \eqref{eq-h} admits a unique solution such that
\[\p^i_th\in C^0([0,T_0],W^{2-2i,2}(\Om))\cap L^2([0,T_0], W^{3-2i,2}(\Om))\]
for $i=0,1$.
\end{thm}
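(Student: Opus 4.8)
The plan is to prove Theorem \ref{thm-h} by the Galerkin method outlined in the appendix, together with a bootstrap for the time derivative. The scheme is already essentially laid out: solve the finite-dimensional ODE system \eqref{eq-Ga-h}, derive $n$-independent $W^{2,2}$-in-space energy estimates, pass to the limit, and then upgrade $\partial_t h$ afterwards from the equation itself.

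\textbf{Step 1: Galerkin construction.} Using the eigenbasis $\{g_i\}$ of $\Delta - I$ with Neumann conditions and the projections $P_n$, the system \eqref{eq-Ga-h} is a linear ODE in the coefficients of $h_n$; local-in-time existence and uniqueness on a maximal interval $[0,T_n)$ is immediate. The point is to show $T_n \geq T_0$ by a priori bounds. Here I would test \eqref{eq-Ga-h} against $h_n$ and against $\Delta^2 h_n$ — both are legitimate since $H_n$ consists of Neumann eigenfunctions and $P_n$ is self-adjoint. The skew-symmetry $\langle u\times X, X\rangle = 0$ kills the leading first-order contribution of the $u\times$ term, while the $\varepsilon\Delta$ term yields the good negative term $-\varepsilon\int|\partial h_n|^2$ (resp. $-\varepsilon\int|\partial\Delta h_n|^2$) that absorbs all the cross terms after Young's inequality and the Gagliardo–Nirenberg interpolations $\|h_n\|_{L^4}\lesssim \|h_n\|_{W^{1,2}}^{3/4}\|h_n\|_{L^2}^{1/4}$ etc. The computations \eqref{es-h}–\eqref{es-h2} carry this out; Gronwall (Lemma \ref{Gron-inq}) then gives the uniform bound \eqref{es-h3}, hence $T_n\geq T_0$. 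Note the constant depends on $\varepsilon$, which is acceptable since uniformity in $\varepsilon$ is not claimed in this theorem.

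\textbf{Step 2: Passage to the limit.} From \eqref{es-h3} the sequence $\{h_n\}$ is bounded in $L^\infty([0,T_0],W^{2,2})\cap L^2([0,T_0],W^{3,2})$, and reading off $\partial_t h_n$ from the equation (using the regularity assumptions \eqref{assumption-A} and Lemma \ref{alg} to multiply coefficients against $h_n$) bounds $\partial_t h_n$ in $L^2([0,T_0],L^2)$. By the Aubin–Simon compactness Lemma \ref{A-S}, a subsequence converges strongly in $L^2([0,T_0],W^{2,2})$, which suffices to pass to the limit in every (linear) term of \eqref{eq-Ga-h}; the limit $h$ solves \eqref{eq-h} in the weak sense and inherits \eqref{es-h4}. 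The Neumann boundary condition survives in the weak sense because each $h_n$ satisfies it. Continuity in time, $h\in C^0([0,T_0],W^{2,2})$, follows from Lemma \ref{C^0-em} applied with $E_{2,2}$ (taking $X=W^{3,2}$, $Y=W^{1,2}$, so $h\in C^0([0,T_0],W^{2,2})$) once we also know $\partial_t h\in L^2([0,T_0],W^{1,2})$ — which is the content of Step 3 — or more directly from the weak formulation. Uniqueness follows by testing the equation for the difference of two solutions against the difference itself and applying Gronwall, exactly as in the energy estimate with no forcing.

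\textbf{Step 3: Regularity of $\partial_t h$.} Rewrite the equation as
\[
\partial_t h = (\varepsilon I + u\times)(\Delta h + f_1\#\partial h + f_2\# h) + f_3 - \langle h,\partial_t u\rangle u.
\]
Given $h\in L^\infty([0,T_0],W^{2,2})\cap L^2([0,T_0],W^{3,2})$ and the assumptions \eqref{assumption-A}, Lemma \ref{alg} shows the right-hand side lies in $C^0([0,T_0],L^2)\cap L^2([0,T_0],W^{1,2})$: the term $\Delta h$ is in $L^\infty([0,T_0],L^2)\cap L^2([0,T_0],W^{1,2})$, the products $f_i\# h$, $f_i\#\partial h$ are controlled since $f_i\in C^0([0,T_0],W^{1,2})$ and $W^{1,2}(\Omega)$ is an algebra-type multiplier on $\Omega\subset\mathbb{R}^3$ up to the relevant orders, and $f_3$ is assumed of exactly this regularity. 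Hence $\partial_t h\in C^0([0,T_0],L^2)\cap L^2([0,T_0],W^{1,2})$, completing the claimed estimates for $i=0,1$.

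\textbf{Main obstacle.} The only genuinely delicate point is the closure of the highest-order energy estimate \eqref{es-h1}: one must check that every cross term produced by the $u\times\Delta$ and $f_1\#\partial$ structure, after integrating against $\Delta^2 h_n$ and one integration by parts, can be bounded by $\frac{\varepsilon}{C}\|\partial\Delta h_n\|_{L^2}^2$ plus lower-order quantities already controlled by the assumptions \eqref{assumption-A} and the previous estimate \eqref{es-h}. This is where the $\varepsilon$-dependence of the constant is unavoidable — the $u\times\Delta h_n$ term does \emph{not} produce a favorable sign at this order (unlike at the $L^2$ level, where skew-symmetry saved us), so one is forced to spend part of the parabolic dissipation $\varepsilon\|\partial\Delta h_n\|_{L^2}^2$ to absorb it. The interpolation inequalities of Section \ref{s: Sob-inerp-inq} (in dimension $m\leq 3$) are exactly what make each such term subcritical. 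Everything else is routine linear PDE bookkeeping.
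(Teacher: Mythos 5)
Your proposal is correct and follows essentially the same route as the paper's Appendix~\ref{s: parabolic-eq}: Galerkin approximation in the Neumann eigenbasis, testing against $h_n$ and $\De^2 h_n$ with the $\ep$-dissipation absorbing the cross terms via the interpolation inequalities, Gronwall, compactness, and then reading $\p_t h$ off the equation. The only quibble is your closing remark: even at the top order the skew-symmetry $\<u\times \p\De h_n,\p\De h_n\>=0$ still kills the leading contribution of $u\times\De h_n$ after one integration by parts, and what must be absorbed by the dissipation is only the commutator $\p u\times\De h_n$ (the term $I_1$ in \eqref{es-h1}), so the mechanism is the same as at the $L^2$ level.
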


\subsection{$W^{3,2}$-regular solution}\label{ss: Ga-approx1}
In this part, under the following enhanced regularity assumptions:
\begin{equation}\label{assumption-A1}
\begin{aligned}
&\p^i_tu\in L^\infty([0,T_0], W^{4-2i,2}(\Om))\,\,\text{for}\,\,i=0,1,2,\\
&f_i\in L^\infty([0,T_0], W^{1,2}(\Om))\cap L^2([0,T_0],W^{2,2}(\Om))\,\,\text{for}\, i=1,2, \\
&\p_t f_i\in L^\infty([0,T_0], L^2(\Om)),\,\,\text{for}\, i=1,2, \\
&f_3\in L^\infty([0,T_0], W^{1,2}(\Om))\,\,\text{and}\,\,\p_t f_3\in L^2([0,T_0],L^2(\Om)),
\end{aligned}
\end{equation}
we establish $W^{3,2}$-regularity for the solution $h$ given in Theorem \ref{thm-h}. To this end, we analyze the equation satisfied by $\p_t h_n$:
\begin{equation}\label{eq-p_t-h_n}
\begin{aligned}
	\p^2_t h_n=&P_n\{(\ep I+ u\times )\De \p_t h_n+\p_t u\times (\De h_n+f_1\#\p h_n+f_2\#h_n)\}\\
	&+P_n\{(\ep I+ u\times )(\p_t f_1\#\p h_n+f_1\#\p \p_th_n)\}\\
	&+P_n\{(\ep I+ u\times )(\p_t f_2\#h_n+f_2\#\p_th_n)+\p_t f_3\}\\
    &-P_n\{\<\p_t h_n, \p_t u\>u+\<h_n, \p^2_t u\>u+\<h_n, \p_t u\>\p_tu\}.
\end{aligned}
\end{equation}

Taking $-\De \p_t h_n$ as a test function for \eqref{eq-p_t-h_n}, we obtain
\begin{equation}\label{es-p-t-h}
\begin{aligned}
\frac{1}{2}\p_t \int_{\Om}|\p \p_t h_n|^2dx\leq &-\ep\int_{\Om}|\De \p_t h_n|^2dx\\
	&+\int_{\Om}|\p_t u\times (\De h_n+f_1\#\p h_n+f_2\#h_n)||\De \p_t h_n|dx\\
	&+\int_{\Om}|\p_t f_1\#\p h_n+f_1\#\p \p_th_n||\De \p_t h_n|dx\\
	&+\int_{\Om}(|\p_t f_2\#h_n|+|f_2\#\p_th_n|+|\p_t f_3|)|\De \p_t h_n|dx\\
    &+\int_{\Om}(|\p_t u|^2|h_n|+|\p^2_t u||h_n|+|\p_tu||\p_th_n|)|\De \p_t h_n|dx\\
	=&-\ep\int_{\Om}|\De \p_t h_n|^2dx+II_1+II_2+II_3+II_4.
\end{aligned}
\end{equation}
Here, the terms $II_1$-$II_4$ admit the following estimates:
\begin{align*}
II_1\leq &C\norm{\p_t u}_{L^6}\norm{\De h_n}_{L^3}\norm{\De \p_t h_n}_{L^2}\\
&+\norm{\p_t u}_{L^6}\norm{f_1}_{L^6}\norm{\p h_n}_{L^6}\norm{\De \p_t h_n}_{L^2}\\
&+C\norm{\p_t u}_{L^6}\norm{f_2}_{L^3}\norm{h_n}_{L^\infty}\norm{\De \p_t h_n}_{L^2}\\
\leq &C_\ep \norm{\p_t u}^2_{W^{1,2}}(\norm{f_1}^2_{W^{1,2}}+\norm{f_2}^2_{W^{1,2}})\norm{h_n}^2_{W^{3,2}}+\frac{\ep}{6}\norm{\De \p_t h_n}^2_{L^2}.\\
II_2\leq &C\norm{\p_t f_1}_{L^2}\norm{\p h_n}_{L^\infty}\norm{\De \p_t h_n}_{L^2}\\
&+C\norm{f_1}_{L^6}\norm{\p \p_t h_n}_{L^3}\norm{\De \p_t h_n}_{L^2}\\
\leq &C_\ep \norm{\p_t f_1}^2_{L^2}\norm{h_n}^2_{W^{3,2}}+C_\ep(\norm{f_1}^4_{W^{1,2}}+1)\norm{\p \p_t h_n}^2_{L^2}+\frac{\ep}{6}\norm{\De \p_t h_n}^2_{L^2},\\
II_3\leq &C\norm{\p_t f_2}_{L^2}\norm{h_n}_{L^\infty}\norm{\De \p_t h_n}_{L^2}+C\norm{f_2}_{L^6}\norm{\p_t h_n}_{L^3}\norm{\De \p_t h_n}_{L^2}\\
&+C\norm{\p_t f_3}_{L^2}\norm{\De \p_t h_n}_{L^2}\\
\leq &C_\ep\norm{\p_t f_2}^2_{L^2}\norm{h_n}^2_{W^{2,2}}+C_\ep\norm{\p_tf_3}^2_{L^2}\\
&+C_\ep \norm{f_2}^2_{W^{1,2}}\norm{\p_t h_n}^2_{W^{1,2}}+\frac{\ep}{6}\norm{\De \p_t h_n}^2_{L^2},\\
II_4\leq &C(\norm{\p_t u}^2_{L^4}+\norm{\p^2_tu}_{L^2})\norm{h_n}_{L^\infty}\norm{\De \p_t h_n}_{L^2}+C\norm{\p_t u}_{L^6}\norm{\p_t h_n}_{L^3}\norm{\De \p_t h_n}_{L^2}\\
\leq &C_\ep(\norm{\p_t u}^4_{W^{1,2}}+\norm{\p^2_tu}^2_{L^2})\norm{h_n}^2_{W^{2,2}}+C_\ep \norm{\p_t u}^2_{W^{1,2}}\norm{\p_t h_n}^2_{W^{1,2}}+\frac{\ep}{6}\norm{\De \p_t h_n}^2_{L^2},\\
\end{align*}

Substituting the estimates of $II_1$-$II_3$ into \eqref{es-p-t-h}, we deduce that
\begin{equation}\label{es-p-t-h1}
\begin{aligned}
\frac{1}{2}\p_t \int_{\Om}|\p \p_t h_n|^2dx
\leq &-\frac{\ep}{2}\int_{\Om}|\De \p_t h_n|^2dx+C_\ep\norm{\p_t h_n}^2_{W^{1,2}}\\
&+C_\ep(\norm{h_n}^2_{W^{3,2}}+\norm{\p_tf_3}^2_{L^2}+1),
\end{aligned}
\end{equation}
where we used the regularity assumption \eqref{assumption-A1}.

From the Galerkin equation \eqref{eq-Ga-h} and the assumption \eqref{assumption-A1}, we also establish
\begin{align}
\sup_{0\leq t\leq T_0}\norm{\p_t h_n}^2_{L^2(\Om)}\leq C(\norm{h_n}^2_{W^{2,2}(\Om)}+1)\leq C_\ep(T_0, \norm{h_0}_{W^{2,2}(\Om)}).\label{es-p-t-h2}
\end{align}

Substituting this estimate \eqref{es-p-t-h2} into \eqref{es-p-t-h1}, we can apply Gronwall's inequality to show that
\begin{align}
\sup_{0\leq t\leq T_0}\norm{\p_t h_n}^2_{W^{1,2}}+\ep\int^{T_0}_0\norm{\p_t h_n}^2_{W^{2,2}(\Om)}dt\leq C(\ep, T_0, \norm{h_0}_{W^{3,2}(\Om)}),\label{es-p-t-h3}
\end{align}
where we used the initial bound:
\[\norm{\p \p_t h_n}^2_{L^2(\Om)}|_{t=0}\leq C\norm{P_n(h_0)}^2_{W^{3,2}(\Om)}\leq C\norm{h_0}^2_{W^{3,2}(\Om)},\]
following from the compatibility condition $\frac{\p h_0}{\p \nu}|_{\p\Om}=0$.

Furthermore, from equation \eqref{eq-p_t-h_n} and estimate \eqref{es-h4}, we also obtain
\[\int^{T_0}_0\norm{\p^2_t h_n}^2_{L^2}dt\leq C(\ep, T_0, \norm{h_0}_{W^{3,2}(\Om)}).\]

Therefore, passing to the limit $n\to \infty$ yields the time regularity for $h$:
\begin{equation}\label{es-h5}
\begin{aligned}
	\p_t h\in& C^0([0,T_0],W^{1,2}(\Om))\cap L^2([0,T_0],W^{2,2}(\Om)),\\
	\p^2_t h\in &L^2([0,T_0],L^2(\Om)).
\end{aligned}
\end{equation}

The estimate \eqref{es-h5} further implies improved regularity of $h$. Using equation \eqref{eq-h}, we derive the following elliptic equation
\begin{align}
\ep\De h+u\times \De h=-\p_t h+(\ep I+u\times)(f_1\# \p h+f_2\# h)+f_3.\label{elliptic-eq-h}
\end{align}
Under the additional assumption $f_i \in L^2([0,T_0],W^{2,2}(\Om))$ for $i=1,2,3$, the estimates \eqref{es-h4} and \eqref{es-h5} imply
\[\ep\De h+u\times \De h\in L^\infty([0,T_0],W^{1,2}(\Om))\cap L^2([0,T_0],W^{2,2}(\Om)),\]
This elliptic regularity produces the spatial improvement for $h$:
\[\norm{\De h}^2_{L^\infty([0,T_0],W^{1,2}(\Om))}+\norm{\De h}^2_{L^2([0,T_0],W^{2,2}(\Om))}<\infty,\]
which implies the following regularity estimate:
\[h\in C^0([0,T_0],W^{3,2}(\Om))\cap L^2([0,T_0], W^{4,2}(\Om)).\]

In conclusion, we obtain the following result.
\begin{thm}\label{thm-h1}
Let $\Om \subset \mathbb{R}^3$ be a bounded domain with smooth boundary, and let $h_0 \in W^{3,2}(\Om)$ satisfy the compatibility condition:
$$\frac{\p h_0}{\p \nu}|_{\p\Om}=0.$$
Under the assumption of regularity \eqref{assumption-A1}, the solution $h$ to problem \eqref{eq-h} established in Theorem \ref{thm-h} admits the enhanced regularity:
\begin{equation}\label{es-h7}
\begin{aligned}
	\p^i_th\in &C^0([0,T_0],W^{3-2i,2}(\Om))\cap L^2([0,T_0], W^{4-2i,2}(\Om))\,\, \text{for}\,\, i=0,1,\\
	\p^2_t h\in & L^2([0,T_0], L^2(\Om)).
\end{aligned}
\end{equation}
\end{thm}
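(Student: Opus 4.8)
The plan is to push the Galerkin construction of Theorem~\ref{thm-h} one time-derivative further and then recover the missing spatial regularity from the elliptic structure of \eqref{eq-h}. I would start again from the approximations $h_n$ solving \eqref{eq-Ga-h}, differentiate that finite-dimensional system in $t$ to obtain equation \eqref{eq-p_t-h_n} for $\p_t h_n$, and test it against $-\De\p_t h_n$. This is admissible because $\De h_n\in H_n$ and the eigenfunctions $g_i$ satisfy $\p g_i/\p\nu|_{\p\Om}=0$, so no boundary terms arise; moreover the top-order rotation term $u\times\De\p_t h_n$ drops out after the pairing, since $\<u\times a,\,a\>=0$ pointwise, leaving the coercive contribution $-\ep\int_\Om|\De\p_t h_n|^2$ and the error integrals $II_1,\dots,II_4$.

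Bounding $II_1$--$II_4$ is the technical core. Each is a triple product of factors drawn from $u,f_1,f_2,f_3,h_n$ and their derivatives, and I would estimate them by H\"older's inequality together with the Gagliardo--Nirenberg inequalities of Theorem~\ref{G-N2} and the multiplication estimates of Lemma~\ref{alg}, absorbing the top-order quantities $\|\De\p_t h_n\|_{L^2}$ and the interpolated $\|\p\p_t h_n\|_{L^2}\leq C\|\p_t h_n\|_{W^{2,2}}^{1/2}\|\p_t h_n\|_{L^2}^{1/2}$ into $-\tfrac{\ep}{2}\int|\De\p_t h_n|^2$. The inputs are the hypotheses \eqref{assumption-A1} on $u,f_1,f_2,f_3$, the bound $\|h_n\|_{W^{2,2}}^2+\|h_n\|_{L^2([0,T_0],W^{3,2})}^2\leq C_\ep$ already available at the previous stage (cf.\ \eqref{es-h3}), and the elementary estimate $\sup_t\|\p_t h_n\|_{L^2}^2\leq C(\|h_n\|_{W^{2,2}}^2+1)$ read off from \eqref{eq-Ga-h}. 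The outcome is a differential inequality of the form $\p_t\|\p\p_t h_n\|_{L^2}^2\leq C_\ep\big(\|\p_t h_n\|_{W^{1,2}}^2+\|h_n\|_{W^{3,2}}^2+\|\p_t f_3\|_{L^2}^2+1\big)$. For the initial value one evaluates \eqref{eq-Ga-h} at $t=0$: then $\p_t h_n(0)$ is an algebraic expression in $P_n h_0,\De P_n h_0,u(0),f_i(0)$, and the compatibility condition $\p h_0/\p\nu|_{\p\Om}=0$ together with Lemma~\ref{es-P_n}(3) yields $\|\p\p_t h_n(0)\|_{L^2}\leq C\|h_0\|_{W^{3,2}}$, uniformly in $n$. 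Gronwall's inequality (Lemma~\ref{Gron-inq}) then gives the $n$-uniform bound $\sup_{[0,T_0]}\|\p_t h_n\|_{W^{1,2}}^2+\ep\int_0^{T_0}\|\p_t h_n\|_{W^{2,2}}^2\,dt\leq C$, and feeding this back into \eqref{eq-p_t-h_n} bounds $\int_0^{T_0}\|\p_t^2 h_n\|_{L^2}^2\,dt$.

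Letting $n\to\infty$ along a subsequence, using weak-$*$ compactness and the Aubin--Simon lemma (Lemma~\ref{A-S}) to pass to the limit in the nonlinear terms, I obtain $\p_t h\in C^0([0,T_0],W^{1,2}(\Om))\cap L^2([0,T_0],W^{2,2}(\Om))$ and $\p_t^2 h\in L^2([0,T_0],L^2(\Om))$. To gain the last two spatial derivatives of $h$ I would rewrite \eqref{eq-h} as the elliptic identity \eqref{elliptic-eq-h}, $\ep\De h+u\times\De h=-\p_t h+(\ep I+u\times)(f_1\#\p h+f_2\#h)+f_3$, and invert the pointwise operator $\ep I+u\times$ on $\Real^3$, whose eigenvalues $\ep,\ep\pm i$ are nonzero and whose inverse is a smooth function of $u$, hence lies in $L^\infty([0,T_0],W^{4,2})$. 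Using the extra hypothesis $f_i\in L^2([0,T_0],W^{2,2})$ from \eqref{assumption-A1}, Lemma~\ref{alg}, and the bounds $h\in L^\infty W^{2,2}\cap L^2 W^{3,2}$ and $\p_t h\in L^\infty W^{1,2}\cap L^2 W^{2,2}$ just obtained, the right-hand side --- and therefore $\De h$ --- lies in $L^\infty([0,T_0],W^{1,2})\cap L^2([0,T_0],W^{2,2})$; the equivalent-norm estimate of Lemma~\ref{eq-norm} (valid since $\p h/\p\nu|_{\p\Om}=0$) then upgrades $h$ to $L^\infty([0,T_0],W^{3,2})\cap L^2([0,T_0],W^{4,2})$, and Lemma~\ref{C^0-em} promotes this, and likewise $\p_t h$, to the stated continuity in time, which yields \eqref{es-h7}.

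I expect the main obstacle to be the uniform-in-$n$ a priori estimate of the second step: closing the Gronwall argument requires careful bookkeeping of exactly which norms of $h_n$ are genuinely controlled at the previous stage, and the interpolation constants must be tracked so that only a fixed fraction of $\ep\|\De\p_t h_n\|_{L^2}^2$ is consumed by the error terms. The control of the initial datum $\|\p\p_t h_n(0)\|_{L^2}$, which relies essentially on the Neumann compatibility of $h_0$ through Lemma~\ref{es-P_n}(3), is the other delicate point, since without it the bound would degenerate in $n$.
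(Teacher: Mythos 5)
Your proposal is correct and follows essentially the same route as the paper's Appendix A.2: differentiate the Galerkin system in time, test \eqref{eq-p_t-h_n} against $-\De\p_t h_n$ (where the rotation term indeed cancels pointwise), estimate $II_1$--$II_4$ to close a Gronwall argument with the initial bound supplied by the compatibility condition and Lemma \ref{es-P_n}(3), and then recover the spatial regularity from the elliptic identity \eqref{elliptic-eq-h} together with Lemmas \ref{eq-norm} and \ref{C^0-em}. The only cosmetic difference is that you make explicit the invertibility of $\ep I+u\times$ via its eigenvalues, where the paper passes directly from the bound on $\ep\De h+u\times\De h$ to the bound on $\De h$.
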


\medskip
\noindent {\it\bf{Acknowledgements}}: The author B. Chen is partially supported by NSFC (Grant No. 12301074) and Guangdong Basic and Applied Basic Research Foundation (Grant No. 2025 A1515010502), and the author Y.D. Wang is partially supported by NSFC (Grant No. 12431003).

\medskip
\section*{Statements and Declarations}
\noindent {\it\bf{Conflict of interest}}: The authors declare that they have no conflict of interest.

\medskip
\noindent {\it\bf{Data Availability}}: Data sharing is not applicable to this article as no datasets were generated or analyzed during the study.

\medskip


\begin{thebibliography}{References}	
\bibitem{B1}
\newblock I. Bejenaru,
\newblock Global results for Schr\"odinger maps in dimensions $n\geq 3$,
\newblock \emph{Comm. Partial Differential Equations}, \textbf{33}(2008), 451-477.
	
\bibitem{BIK}
\newblock I. Bejenaru, A.D. lonescu and C.E. Kenig,
\newblock Global existence and uniqueness of Schr\"{o}dinger maps in dimensions $n\geq4$,
\newblock \emph{Adv. Math.}, \textbf{215}(2007), no. 1, 263-291.
		
\bibitem{BIKT}
\newblock I. Bejenaru, A.D. Ionescu, C.E. Kenig and D. Tataru,
\newblock Global Schr\"odinger maps in dimensions $n\geq 2$: small data in the critical Sobolev spaces,
\newblock \emph{Ann. of Math.}, \textbf{(2)173} (2011), no. 3, 1443-1506.

\bibitem{BIKT1}
\newblock I. Bejenaru, A.D. Ionescu, C.E. Kenig, D. Tataru,
\newblock Equivariant Schr\"odinger maps in two spatial dimensions.
\newblock \emph{Duke Math. J.}, \textbf{162} (2013), no. 11, 1967-2025.

\bibitem{BIKT2} I. Bejenaru, A.D. Ionescu, C.E. Kenig and D. Tataru, Equivariant Schr\"odinger maps in two spatial dimensions: the $\mathbb{H}^2$ target, \emph{Kyoto J. Math.} \textbf{56} (2016), 283-323.	
		
\bibitem{BT}
\newblock I. Bejenaru and D. Tataru,
\newblock Global wellposedness in the energy space for the Maxwell-Schr\"odinger system,
\newblock \emph{Comm. Math. Phys.}, \textbf{288} (2009), 145-198.
	
\bibitem{BT10} I. Bejenaru and D. Tataru, Near soliton evolution for equivariant Schr\"odinger maps in two spatial dimensions, \emph{Mem. Am. Math. Soc.},  \textbf{228} (2014), no. 1069, vi+108 pp.

%\bibitem{BFLS}
%\newblock B. Bene\v{s}ov\`{a}, J. Forster, C. Liu and A. Schl\"{o}merkemper,
%\newblock Existence of weak solutions to an evolutionary model for magnetoelasticity,
%\newblock \emph{SIAM J. Math. Anal.}, \textbf{50} (2018), no. 1, 1200-1236.	

\bibitem{Bo} G. Bonithon, \emph{Landau-Lifschitz-Gilbert equation with applied electric current}, Discrete Contin. Dyn. Syst.
2007, Dynamical Systems and Differential Equations, Proceedings of the 6th AIMS International Conference, suppl., 138-144.		
	
\bibitem{BF}
\newblock F. Boyer and P. Fabrie,
\newblock \emph{Mathematical tools for the study of the incompressible Navier-Stokes equations and related models},
\newblock Springer New York Heidelberg Dordrecht London, 2013.
	
%\bibitem{CF}
%\newblock G. Carbou and P. Fabrie,
%\newblock Regular solutions for Landau-Lifschitz equation in a bounded domain,
%\emph{Differ. Integral Equ.}, \textbf{14}(2001), no. 2, 213-229.
		
\bibitem{CJ}
\newblock G. Carbou and R. Jizzini,
\newblock Very regular solutions for the Landau-Lifschitz equation with electric current,
\newblock \emph{Chin. Ann. Math. Ser. B}, \textbf{39} (2018), no. 5, 889-916.
				
\bibitem{CW0}
\newblock B. Chen and Y.D. Wang,
\newblock Global weak solutions for Landau-Lifshitz flows and heat flows associated to micromagnetic energy functional,
\newblock \emph{Commun. Pure Appl. Anal.}, \textbf{20} (2021), no. 1, 319-338.

\bibitem{CW1}
\newblock B. Chen and Y.D. Wang,
\newblock Very regular solution to Landau-Lifshitz-Gilbert system with spin-polarized transport.
\newblock \emph{Front. Math.} \textbf{18} (2023), no. 4, 751-795.

\bibitem{CW2}
\newblock B. Chen and Y.D. Wang,
\newblock Existence and uniqueness of local regular solution to the Schr\"{o}dinger flow from a bounded domain in $\mathbb{R}^3$ into $\U^2$.
\newblock \emph{Comm. Math. Phys.} \textbf{402} (2023), no. 1, 391-428.

\bibitem{CW3}
\newblock B. Chen and Y.D. Wang,
\newblock Smooth local solutions to Schr\"odinger flows with damping term for maps into symplectic manifolds,	
\newblock \emph{Pacific J. Math.}, \textbf{326} (2023), no. 2, 187-226.

\bibitem{CW4}
\newblock B. Chen and Y.D. Wang,
\newblock Smooth solutions to the Schr\"odinger flow for maps from smooth bounded domains in Euclidean spaces into $\mathbb{S}^2$,
\newblock \emph{Comm. Anal. Geom.}, \textbf{33} (2025), no. 10, 2263-2332.

\bibitem{CDMS}
\newblock J. Chen, R. Du, Z. Ma, Z. Sun and L. Zhang,
\newblock On the multiscale Landau-Lifshitz-Gilbert equation: two-scale convergence and stability analysis.
\newblock\emph{Multiscale Model. Simul.} \textbf{20} (2022), no. 2, 835-856.

%\bibitem{Chern}
%\newblock A. Chern, F. Kn\"oppel, U. Pinkall, P. Schr\"oder and S. Wei$\beta$mann.
%\newblock{Schr\"odinger's Smoke}.
%\newblock\emph{ACM Trans. Graph.} 35 (2016), 77.

\bibitem{YZ}
\newblock Y.Z. Chen,
\newblock\emph{Second order parabolic partial differential equations}.
\newblock Peking University Press, 2003.

%\bibitem{Da}
%\newblock L. Da Rios,
%\newblock On the motion of an unbounded fluid with a vortex filament of any shape.
%\newblock \emph{Rend. Circ. Mat. Palermo}, \textbf{22}(1906), 117-135.

\bibitem{DTZ}
\newblock W.Y. Ding, H.Y. Tang and C.C. Zeng,
\newblock Self-similar solutions of Schr\"odinger flows.
\newblock\emph{Calc. Var. Partial Differential Equations} \textbf{34} (2009), no. 2, 267-277.

\bibitem{DW}
\newblock W.Y. Ding and Y.D. Wang,
\newblock Local Schr\"{o}dinger flow into K\"ahler manifolds,
\newblock \emph{Sci. China Ser. A}, \textbf{44} (2001), no. 11, 1446-1464.
		
\bibitem{DW1}
\newblock W.Y. Ding and Y.D. Wang,
\newblock Schr\"{o}dinger flow of maps into symplectic manifolds,
\newblock \emph{Sci. China, Ser. A}, \textbf{41} (1998), no. 7.
		
%\bibitem[DWW03]{DWW}
%\newblock W.Y. Ding, H.Y. Wang and Y.D. Wang, Schr\"{o}dinger flows on compact Hermitian symmetric spaces and related problems,
%\newblock \emph{Acta Math. Sin. (Engl. Ser.)}, \textbf{19} (2003), no. 2, 303-312.
\bibitem{GW} C.J. Garcia-Cervera and X.P. Wang, Spin-polarized transport: existence of weak solutions, Discre. Contin. Dynam. Syst. Series B, 7(2007), no. 1, 87-100.

\bibitem{GSZ}
\newblock P. Germain, J. Shatah and C.C. Zeng,
\newblock Self-similar solutions for the Schr\"odinger map equation.
\newblock\emph{Math. Z.} \textbf{264} (2010), no. 3, 697-707.


\bibitem{G}
\newblock T.L. Gilbert,
\newblock A Lagrangian formulation of gyromagnetic equation of the magnetization field,
\newblock \emph{Phys. Rev.}, \textbf{100} (1955), 1243-1255.

\bibitem{GKT} S. Gustafson, K. Kang, T.P. Tsai, Asymptotic stability of harmonic maps under the Schr\"odinger flow, \emph{Duke Math. J.}, \textbf{145}(2008), no. 3,  537-583.

\bibitem{GNT} S. Gustafson, K. Nakanishi, T.P. Tsai, Asymptotic stability, concentration, and oscillation in harmonic map heat flow, Landau-Lifshitz, and Schr\"oinger maps on $\mathbb{R}^2$, \emph{Commun. Math. Phys.}, \textbf{300} (2010), no. 1, 205-242.

\bibitem{IK}
\newblock A.D. Ionescu and C.E. Kenig,
\newblock Low-regularity Schr\"{o}dinger maps. II. Global well-posedness in dimensions $d \geq 3$,
\newblock \emph{Comm. Math. Phys.}, \textbf{271} (2007), 523-559.

\bibitem{JW1}
\newblock Z.L. Jia and Y.D. Wang,
\newblock Global weak solutions to Landau-Lifshitz equations into compact Lie algebras,
\newblock \emph{Front. Math. China}, \textbf{14}(2019), no. 6, 1163-1196.

\bibitem{JW2}
\newblock Z.L. Jia and Y.D. Wang,
\newblock Global weak solutions to Landau-Lifshtiz systems with spin-polarized transport,
\newblock \emph{Discrete Contin. Dyn. Syst.}, \textbf{40}(2020) no. 3, 1903-1935.

%\bibitem{JLL}
%\newblock N. Jiang, H. Liu and Y.L. Luo,
%\newblock On well-posedness of an evolutionary model for magnetoelasticity: hydrodynamics of viscoelasticity and Landau-Lifshitz-Gilbert systems.
%\newblock \emph{J. Differential Equations} \textbf{367}(2023), 79-123.

\bibitem{KKS} M. Kalousek, J. Kortum, A. Schl\"omerkemper,
\newblock Mathematical analysis of weak and strong solutions to an evolutionary model for magnetoviscoelasticity,
\newblock\emph{Discrete Contin. Dyn. Syst. Ser. S}, \textbf{14} (2021), no. 1, 17-39.

%\bibitem{Khe}
%\newblock B. Khesin,
%\newblock Symplectic structures and dynamics on vortex membranes,
%\newblock\emph{Mosc. Math. J.}, \textbf{12}(2012), no. 2, 413-434, 461-462.
		
\bibitem{LL}
\newblock L.D. Landau and E.M. Lifshitz,
\newblock On the theory of dispersion of magnetic permeability in ferromagnetic bodies,
\newblock \emph{Phys. Z. Soviet.}, \textbf{8} (1935), 153-169.

\bibitem{LLOS} A. Lawrie, J. L\"uhrmann, Sung-Jin Oh, S.  Shahshahani, Asymptotic stability of harmonic maps on the hyperbolic plane under the Schr\"odinger maps evolution, \emph{Comm. Pure Appl. Math.}, \textbf{76} (2023), no. 3, 453-584.

\bibitem{L1}
\newblock Z. Li,
\newblock Global Schr\"odinger map flows to K\"ahler manifolds with small data in critical Sobolev spaces: high dimensions,
\emph{J. Funct. Anal.} \textbf{281} (2021), no. 6, Paper No.109093, 76 pp.

\bibitem{LZ3} Z. Li, On global dynamics of Schr\"odinger map flows on hyperbolic planes near harmonic maps, \emph{Comm. Math. Phys.}, \textbf{393} (2022), no. 1, 279-345.

\bibitem{L2}
\newblock Z. Li,
\newblock Global Schr\"odinger map flows to K\"ahler manifolds with small data in critical Sobolev spaces: energy critical case,
\emph{J. Eur. Math. Soc. (JEMS)} \textbf{25} (2023), no. 12, 4879-4969.

\bibitem{LW}
\newblock F.H. Lin and J.C. Wei,
\newblock Traveling wave solutions of the Schr\"odinger map equation.
\emph{Comm. Pure Appl. Math.} \textbf{63} (2010), no. 12, 1585-1621.

\bibitem{MRR}
\newblock F. Merle, P. Rapha\"el and I. Rodnianski,
\newblock Blowup dynamics for smooth data equivariant solutions to the critical Schr\"odinger map problem.
\newblock\emph{Invent. Math.} \textbf{193} (2013), no. 2, 249-365.

\bibitem{Sci}
\newblock L. Liu et al,
\newblock Spin-Torque Switching with the Giant Spin Hall Effect of Tantalum.
\newblock\emph{Science} 336(2012), 555.

\bibitem{Nature}
\newblock Miron et al,
\newblock Perpendicular switching of a single ferromagnetic layer induced by in-plane current injection.
\newblock\emph{Nature} 476(2011), 189.

%\bibitem[MW79]{M-W}
%\newblock W.L. Miranker and B.E. Willner,
%\newblock Global analysis of magnetic domains.
%\newblock\emph{Quart. Appl. Math.}, \textbf{37}(1979), 219-238.
		
\bibitem{NSVZ}A. Nahmod, J. Shatah, L. Vega and C.-C. Zeng,
\newblock Schr\"odinger maps and their associated frame systems.
\newblock \emph{Int. Math. Res. Not. IMRN}, 2007, no. 21, 29 pp.

\bibitem{NSU}
\newblock A. Nahmod, A. Stefanov and K. Uhlenbeck,
\newblock On Schr\"odinger maps.
\newblock \emph{Comm. Pure Appl. Math.} \textbf{56} (2003), no. 1, 114-151.

\bibitem{GP} G. Perelman, Blow up dynamics for equivariant critical Schr\"odinger maps. \emph{Comm. Math. Phys.}, \textbf{330} (2014), no. 1, 69-105.

\bibitem{PWW}
\newblock Peter Y. H. Pang, H.Y. Wang, Y.D. Wang,
\newblock\emph{Schr\"odinger flow on Hermitian locally symmetric spaces},
\newblock \emph{Comm. Anal. Geom.}, \textbf{10} (2002), no. 4, 653-681.

%\bibitem[PWW01]{PWW1}
%\newblock Peter Y. H. Pang, H.Y. Wang, Y.D. Wang,
%\newblock Schr\"odinger flow for maps into K\"ahler manifolds, \emph{Asian J. Math.}, \textbf{5} (2001), no. 3, 509-533.
%
%\bibitem[PWW00]{PWW2}
%\newblock Peter Y. H. Pang, H.Y. Wang, Y.D. Wang,
%\newblock Local existence for inhomogeneous Schr\"odinger flow into K\"ahler manifolds, \emph{Acta Math. Sin. (Engl. Ser.)}, \textbf{16} (2000), no. 3, 487-504.		

\bibitem{RRS}
\newblock I. Rodnianski, Y. Rubinstein and G. Staffilani,
\newblock On the global well-posedness of the one-dimensional Schr\"odinger map flow.
\newblock \emph{Anal. PDE}, \textbf{2} (2009), no. 2, 187-209.

%\bibitem{S-R} K. Santugini-Repiquet,
%Solutions to the Landau-Lifshitz system with nonhomogenous Neumann boundary conditions arising from surface anisotropy and super-exchange interactions in a ferromagnetic media.
%\newblock \emph{Nonlinear Anal.}, \textbf{65} (2006), no. 1, 129-158.

\bibitem{Sch}R. Sch\"afer,
\newblock \emph{Magnets, soft and hard: Domains, in Encyclopedia of Materials: Science and Technology}.
\newblock K.J. Buschow et al., eds., Elsevier, Oxford, UK, 2001, pp. 5130-5141.
		
\bibitem{Sim}
\newblock J. Simon,
\newblock Compact sets in the space $L^p([0,T];B)$,
\newblock \emph{Ann. Math. Pura. Appl.}, \textbf{4} (1987), 146, 65-96.

%\bibitem{So0}
%\newblock C. Song,
%\newblock Gauss map of the skew mean curvature flow,
%\newblock \emph{Proc. Amer. Math. Soc.}, \textbf{145} (2017), no. 11, 4963-4970.

%\bibitem{So}
%\newblock C. Song,
%\newblock Local existence and uniqueness of skew mean curvature flow,
%\newblock\emph{J. Reine Angew. Math.} \textbf{776} (2021), 1-26.

%\bibitem{So-S}
%\newblock C. Song and J. Sun,
%\newblock Skew mean curvature flow,
%\newblock\emph{Commun. Contemp. Math.} \textbf{21} (2019), no. 1, 1750090, 29 pp.

%\bibitem{S-W}
%\newblock C. Song and Y.D. Wang,
%\newblock Uniqueness of Schr\"odinger flow on manifolds,
%\newblock \emph{Comm. Anal. Geom. }, \textbf{26} (2018), no. 1, 217-235.

\bibitem{SSB}
\newblock P.L.  Sulem, C. Sulem and C. Bardos,
\newblock On the continuous limit for a system of classical spins,
\newblock \emph{Comm. Math. Phys.}, \textbf{107} (1986), no. 3, 431-454.

%\bibitem{MT}
%\newblock M.E. Taylor,
%\newblock \emph{Partial differential equations III. Nonlinear equations. Second edition},
%\newblock Applied Mathematical Sciences, 117. Springer, New York, 2011. xxii+715 pp.


\bibitem{Uh1}
\newblock C.L. Terng, K. Uhlenbeck,
\newblock Schr\"{o}dinger flows on Grassmannians, in Integrable Systems,
\newblock \emph{Geometry and Topology}, 235-256, AMS/IP \emph{Stud. Adv. Math.}, \textbf{36}, \emph{Amer. Math. Soc.}, Providence, RI, 2006.

%\bibitem{WW} G.W. Wang and Y.D. Wang,
%\newblock Global smooth solution to the incompressible Navier-Stokes-Landau-Lifshitz equations.
%\newblock \emph{Acta Math. Appl. Sin. Engl. Ser.}, \textbf{39} (2023), no. 1, 135-178.

\bibitem{W}
\newblock Y.D. Wang,
\newblock Heisenberg chain systems from compact manifolds into $\U^2$,
\newblock \emph{J. Math. Phys.}, \textbf{39} (1998), no.1, 363-371. 
		
\bibitem{WZ}		
\newblock S. Wang and Y. Zhou,
\newblock Periodic Schr\"odinger map flow on K\"ahler manifolds,
\newblock arXiv:2302.09969v4.
		
\bibitem{Weh}
\newblock K. Wehrheim,
\newblock \emph{Uhlenbeck Compactness},
\newblock EMS Series of Lectures in Mathematics, 2004.

\bibitem{WY}
\newblock J.C. Wei and J. Yang,
\newblock Traveling vortex helices for Schr\"odinger map equations. \emph{Trans. Amer. Math. Soc.} \textbf{368} (2016), no. 4, 2589-2622.
		
\bibitem{ZGT}
\newblock Y. Zhou, B. Guo, and S.B. Tan,
\newblock Existence and uniqueness of smooth solution for system of ferro-magnetic chain,
\newblock \emph{Sci. China Ser. A}, \textbf{34} (1991), no.3, 257-266.

\end{thebibliography}
\end{document}